\newtheorem{theorem}{Theorem}[section]
\newtheorem{claim}{}[theorem]
\newtheorem{lemma}[theorem]{Lemma}
\newtheorem{corollary}[theorem]{Corollary}
\newtheorem{conjecture}[theorem]{Conjecture}
\theoremstyle{definition}
\newtheorem{definition}[theorem]{Definition}
\newcommand{\bF}{\mathbb F}
\newcommand{\bR}{\mathbb R}
\newcommand{\cF}{\mathcal{F}}
\newcommand{\cL}{\mathcal{L}}
\newcommand{\fH}{\mathfrak{H}}
\newcommand{\cM}{\mathcal{M}}
\newcommand{\cP}{\mathcal{P}}
\newcommand{\cU}{\mathcal{U}}
\DeclareMathOperator{\si}{si}
\DeclareMathOperator{\cl}{cl}
\DeclareMathOperator{\PG}{PG}
\DeclareMathOperator{\GF}{GF}
\newcommand{\elem}{\epsilon}
\newcommand{\del}{\setminus}
\newcommand{\con}{/}
\author{Peter Nelson}
\title[Growth Rate Functions]{Growth Rate Functions of Dense Classes of Representable Matroids}
\begin{document}
\maketitle
\begin{abstract}
	For each proper minor-closed subclass $\cM$ of the $\GF(q^2)$-representable matroids containing all simple $\GF(q)$-representable matroids, we give, for all large $r$, a tight upper bound on the number of points in a rank-$r$ matroid in $\cM$, and construct a rank-$r$ matroid in $\cM$ for which equality holds. As a consequence, we give a tight upper bound on the number of points in a $\GF(q^2)$-representable, rank-$r$ matroid with no $\PG(k,q^2)$-minor. 
\end{abstract}
\section{Introduction}

If $\cM$ is a class of matroids, then the \textit{growth rate function} $h_{\cM}$ of $\cM$ is the function whose value $h_{\cM}(n)$ at a nonnegative integer $n$ is defined to be the maximum of $|M|$, where $M$ is a simple matroid in $\cM$ with $r(M) \le n$, or to be $\infty$ if no such maximum exists. 

For each nonnegative integer $k$ and prime power $q$, let $\cP_{q,k}$ denote the set of matroids of the form $M \con C$, where $M$ is a $\GF(q^2)$-representable matroid, $C$ is a rank-$k$ independent set in $M$, and $M \del C$ is a projective geometry over $\GF(q)$. Equivalently, $\cP_{q,k}$ is the set of $\GF(q^2)$-representable, $k$-element projections of projective geometries over $\GF(q)$. We prove the following: 

\begin{theorem}\label{mainresult}
	Let $q$ be a prime power. If $\cM$ is a proper minor-closed subclass of the $\GF(q^2)$-representable matroids containing all simple $\GF(q)$-representable matroids, then there is an integer $k \ge 0$ such that $\cP_{q,k} \subseteq \cM$, and $h_{\cM}(n) = h_{\cP_{q,k}}(n)$ for all large $n$.
\end{theorem}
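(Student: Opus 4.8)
\emph{Proof plan.} The plan is to trap $\cM$ between two explicit classes, use this to pin down $k$, and then reduce the hard direction to a structure theorem for dense $\GF(q^2)$-representable matroids.

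\smallskip\noindent\textit{Step 1: locating $k$.} Since $\cM$ is minor-closed and a proper subclass, and every $\GF(q^2)$-representable matroid is a minor of some projective geometry over $\GF(q^2)$, there is a least integer $t$ with $\PG(t,q^2)\notin\cM$; then no member of $\cM$ has a $\PG(t,q^2)$-minor. In the other direction, being minor-closed and containing all simple $\GF(q)$-representable matroids, $\cM$ contains every $\GF(q)$-representable matroid, so $\cP_{q,0}\subseteq\cM$ and $h_{\cM}(n)\ge h_{\cP_{q,0}}(n)=\tfrac{q^n-1}{q-1}$. I then record two easy facts: $\cP_{q,j}\subseteq\cP_{q,j+1}$ for all $j$ (adjoin a common coloop to both $N$ and $C$ in a representation $N\con C$), so $h_{\cP_{q,j}}(n)$ is nondecreasing in $j$; and every projective geometry over $\GF(q^2)$ lies in $\cP_{q,j}$ for some $j$ (present $\PG(m-1,q^2)$ as a bounded projection of a $\GF(q)$-geometry by a field-reduction argument), so $\cP_{q,j}\not\subseteq\cM$ once $j$ is large. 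Hence $k:=\max\{j:\cP_{q,j}\subseteq\cM\}$ is a well-defined nonnegative integer, $\cP_{q,k}\subseteq\cM$, and $h_{\cP_{q,k}}(n)\le h_{\cM}(n)$ for all $n$. The remaining task is the reverse inequality $h_{\cM}(n)\le h_{\cP_{q,k}}(n)$ for all large $n$.

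\smallskip\noindent\textit{Step 2: the class $\cP_{q,k}$.} Next I would analyse $\cP_{q,k}$ on its own. A rank-$m$ member of $\cP_{q,k}$ is a projection of a $\GF(q)$-subgeometry $\PG(m+k-1,q)$ of $\PG(m+k-1,q^2)$ from a rank-$k$ centre, and its number of points is the number of rank-$(k+1)$ flats of the ambient matroid through the centre; a submodularity and counting argument yields the tight bound $h_{\cP_{q,k}}(m)$ for large $m$, attained by the matroids $D_{m,k}$ obtained from such a subgeometry together with a generic centre. I need two properties of $D_{m,k}$: (i) $D_{m,k}\in\cP_{q,k}$ and $|D_{m,k}|=h_{\cP_{q,k}}(m)$; and (ii) $D_{m,k}$ is \emph{minor-universal} for $\cP_{q,k}$ over bounded rank, i.e.\ every member of $\cP_{q,k}$ of rank at most $m$ is a minor of $D_{m,k}$ — contracting part of the subgeometry of $D_{m,k}$ produces $\PG(m'+k-1,q)$ for any $m'\le m$ with a centre in an arbitrary position, and deleting image points then realises an arbitrary rank-$m'$ member of $\cP_{q,k}$.

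\smallskip\noindent\textit{Step 3: the structure theorem and the upper bound.} The crux is the following statement, which I expect to obtain from the structure theory of exponentially dense representable matroids: density forces a $\GF(q)$-subgeometry of bounded corank inside $M$; the absence of a $\PG(t,q^2)$-minor then bounds both the number of elements off this subgeometry and the rank one must project a $\GF(q)$-geometry by in order to realise $M$; and a stability refinement promotes ``denser than level $k$'' to ``contains the full level-$(k+1)$ extremal structure as a minor''. Precisely: there is $c=c(q,t)$ such that for every $n\ge c$, every simple $\GF(q^2)$-representable matroid $M$ of rank $n$ with no $\PG(t,q^2)$-minor and with $|M|>h_{\cP_{q,k}}(n)$ has $D_{n-c,\,k+1}$ as a minor. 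Granting this, suppose for contradiction that $h_{\cM}(n)>h_{\cP_{q,k}}(n)$ for some large $n$, witnessed by $M\in\cM$; since $M$ has no $\PG(t,q^2)$-minor, $M$ has a $D_{n-c,\,k+1}$-minor, hence $D_{n-c,\,k+1}\in\cM$ because $\cM$ is minor-closed, hence by Step 2(ii) every member of $\cP_{q,k+1}$ of rank at most $n-c$ lies in $\cM$. Letting $n\to\infty$ gives $\cP_{q,k+1}\subseteq\cM$, contradicting the maximality of $k$. Therefore $h_{\cM}(n)\le h_{\cP_{q,k}}(n)$, and with Step 1, $h_{\cM}(n)=h_{\cP_{q,k}}(n)$, for all large $n$; the matroids $D_{n,k}$ are the required rank-$n$ members of $\cM$ attaining equality.

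\smallskip\noindent\textit{Main obstacle.} The work is concentrated in Step 3: pinning a dense $\GF(q^2)$-representable matroid with no $\PG(t,q^2)$-minor \emph{inside} a bounded projection of a $\GF(q)$-geometry (rather than merely close to one), and proving the stability statement that crossing $h_{\cP_{q,k}}(n)$ forces the full level-$(k+1)$ extremal matroid as a minor. By comparison, computing $h_{\cP_{q,k}}$ exactly and checking the minor-universality of the $D_{m,k}$ in Step 2 is routine, though the projective-geometry bookkeeping must be carried out carefully enough to support the ``for all large $n$'' conclusion rather than merely an asymptotic one.
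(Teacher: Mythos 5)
Your overall reduction matches the paper's: choose $k$ maximal with $\cP_{q,k}\subseteq\cM$, show that the densest rank-$n$ member of $\cP_{q,k}$ is a single explicit matroid $\PG^{(k)}(n-1,q)$ (your $D_{n,k}$) that is minor-universal for the class, and argue that a member of $\cM$ exceeding $h_{\cP_{q,k}}(n)$ in density forces a level-$(k{+}1)$ extremal minor, contradicting maximality of $k$. This is exactly the paper's strategy, which reduces the theorem to its Theorem~\ref{getepg}. However, the proposal materially understates the work hiding in both your Step~2 and Step~3, and that work is where almost all of the paper's content lies.

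First, minor-universality of $D_{m,k}$ (Step~2(ii)) is not routine projective-geometry bookkeeping. It relies on the paper's Theorem~\ref{pgunique}: every $\GF(q^2)$-representation of $\PG(n-1,q)$ is projectively equivalent to one over $\GF(q)$. Without this uniqueness you cannot rule out inequivalent $\GF(q^2)$-embeddings of the $\GF(q)$-subgeometry giving genuinely different $k$-element projections, which would break the claim that every simple rank-$m$ member of $\cP_{q,k}$ is a restriction of a single matroid $\PG^{(k)}(m-1,q)$ (this is Lemma~\ref{extendedproj}, and it is what your ``centre in an arbitrary position'' step silently assumes). Second, and more seriously, the structure theorem you ``expect to obtain from the structure theory of exponentially dense representable matroids'' is not a citation but the central new result of the paper (Theorem~\ref{getepg}); its proof introduces a new connectivity notion (\emph{weak roundness}, replacing roundness, which fails at base-$2$ exponential density), a lemma for contracting a bounded set onto a spanning $\PG(\cdot,q)$-restriction while preserving a specified small flat (Lemma~\ref{contractrestriction}), a matching theorem in projective geometries used to locate unstable sets (Theorem~\ref{pgmatching}, Lemma~\ref{findunstable}), and a dichotomy between a dense spanning $\GF(q)$-represented restriction and a ``constellation'' (Lemma~\ref{mainreduction}, feeding into Lemmas~\ref{spanningwin} and~\ref{constellationwin}). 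You have correctly located the main obstacle but treated it as available off the shelf, which is a genuine gap. A smaller point: the form you state the structure theorem in, producing a $D_{n-c,\,k+1}$-minor with a fixed corank offset $c$, is stronger than the paper proves and stronger than you need; Theorem~\ref{getepg} only gives a $\PG^{(k+1)}(m-1,q)$-minor once $r(M)\ge f_{\ref{getepg}}(m,q,k)$, with no claimed control on the growth of $f$, and that already suffices for your ``$n\to\infty$'' conclusion.
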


We also characterise the densest matroids in $\cP_{q,k}$, which will allow us to give an explicit expression for $h_\cM(n)$: 

\begin{theorem}\label{mainresult1}
	Let $q$ be a prime power. If $\cM$ is a proper minor-closed subclass of the GF($q^2$)-representable matroids containing all simple GF$(q)$-representable matroids, then there exist nonnegative integers $k_{\cM}$ and $n_{\cM}$ so that
	\[h_{\cM}(n) = \frac{q^{n+k_{\cM}}-1}{q-1} - q\left(\frac{q^{2k_{\cM}}-1}{q^2-1}\right)\]
for all $n \ge n_{\cM}$.
\end{theorem}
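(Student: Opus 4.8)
The plan is to obtain Theorem~\ref{mainresult1} from Theorem~\ref{mainresult} by computing $h_{\cP_{q,k}}(n)$ exactly for large $n$. Fix $q$ and $\cM$, and let $k$ be the integer supplied by Theorem~\ref{mainresult}, so that $\cP_{q,k}\subseteq\cM$ and $h_{\cM}(n)=h_{\cP_{q,k}}(n)$ for all large $n$; set $k_{\cM}:=k$. It then suffices to prove that
\[h_{\cP_{q,k}}(n)=\frac{q^{n+k}-1}{q-1}-q\left(\frac{q^{2k}-1}{q^2-1}\right)\qquad\text{for all }n\ge k,\]
since one checks that the right-hand side is non-decreasing in $n$ on this range and equals $\tfrac{q^{2k}-1}{q^2-1}$ at $n=k$, which dominates the trivial bound $|N|\le\tfrac{q^{2r}-1}{q^2-1}$ valid for every simple $\GF(q^2)$-representable $N$ of rank $r<k$; intersecting with the range where Theorem~\ref{mainresult} applies then produces the threshold $n_{\cM}$.

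To evaluate $h_{\cP_{q,k}}(r)$ for $r\ge k$ I would translate to linear algebra over $\GF(q)$ inside $\GF(q^2)^r$. Let $N\in\cP_{q,k}$ have rank $r$, say $N=M\con C$ with $M$ being $\GF(q^2)$-representable, $C$ a rank-$k$ independent set of $M$, and $M\del C$ a projective geometry over $\GF(q)$ (so $r(M)=r+k$). Using the essential uniqueness of the representation of a projective geometry of rank at least $3$, one may choose a $\GF(q^2)$-representation of $M$ in which the columns indexed by $M\del C$ are, up to scaling, exactly the nonzero vectors of a $\GF(q)$-subspace $\Lambda$ that is a $\GF(q)$-form of its $\GF(q^2)$-span, with $\dim_{\GF(q)}\Lambda=r(M\del C)\le r(M)=r+k$. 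Projecting modulo the $k$-dimensional subspace spanned by the columns of $C$ onto a space $V\cong\GF(q^2)^r$, and letting $T\subseteq V$ be the image of $\Lambda$, the simplification of $N$ is represented by the nonzero vectors of $T$; hence $T$ spans $V$ over $\GF(q^2)$, $d:=\dim_{\GF(q)}T$ satisfies $r\le d\le r+k$, and, counting the $\tfrac{q^{d}-1}{q-1}$ one-dimensional $\GF(q)$-subspaces of $T$ in fibres over $\PG(V)$, one gets
\[|N|=\#\{\ell\in\PG(r-1,q^2):\ell\cap T\neq 0\}=\frac{q^{d}-1}{q-1}-q\cdot\frac{q^{2m}-1}{q^2-1},\]
where $q^{2m}$ is the size of the largest $\GF(q^2)$-subspace $T^{\ast}$ of $V$ contained in $T$ (the correction term counts the $\GF(q^2)$-lines lying inside $T$, each met in $q+1$ rather than one $\GF(q)$-point).

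The key step is a lower bound on $m=\dim_{\GF(q^2)}T^{\ast}$. Let $\omega$ generate $\GF(q^2)$ over $\GF(q)$ and write $\omega^2=\alpha+\beta\omega$ with $\alpha,\beta\in\GF(q)$. A direct check shows that $T\cap\omega T$ is closed under multiplication by $\omega$, hence is a $\GF(q^2)$-subspace of $V$ contained in $T$, so $T\cap\omega T\subseteq T^{\ast}$. Since $T+\omega T=\GF(q^2)T=V$ and $\dim_{\GF(q)}T=d$, modularity gives $\dim_{\GF(q^2)}(T\cap\omega T)=d-r$, so $m\ge d-r$. Substituting into the displayed formula and using that $\frac{q^{d}-1}{q-1}-q\cdot\frac{q^{2(d-r)}-1}{q^2-1}$ is non-decreasing in $d$ on $r\le d\le 2r$ (and $d\le r+k\le 2r$) yields $|N|\le\frac{q^{r+k}-1}{q-1}-q\big(\frac{q^{2k}-1}{q^2-1}\big)$. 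Equality is attained by taking $M=\PG(r+k-1,q)$ together with the $k$ new elements represented by $e_{2i-1}+\omega e_{2i}$ for $i=1,\dots,k$, and $C$ the set of these elements: then $T=\GF(q^2)^{k}\oplus\GF(q)^{r-k}$, so $m=k$ and the bound is met. This $M$ is the claimed densest member of $\cP_{q,k}$; combining the computation of $h_{\cP_{q,k}}$ with Theorem~\ref{mainresult} finishes the proof.

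I expect the main obstacle to be the second paragraph: reducing an arbitrary member of $\cP_{q,k}$ to the single subspace $T$ and verifying that $|N|$ equals the number of $\GF(q^2)$-lines meeting $T$. This requires care with the representation of $M$ (that the $\GF(q)$-subgeometry genuinely arises as a $\GF(q)$-form of a subspace, and that $M\del C$ need not span $M$), together with the bookkeeping separating the cases $r\ge k$ and $r<k$. By contrast, the extremal inequality in the third paragraph is short once this translation is in place.
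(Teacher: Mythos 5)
Your proposal is correct, and it takes a genuinely different route from the paper at the key step. In the paper, Theorem~\ref{mainresult1} is a one-line consequence of Lemma~\ref{extendedproj} together with the size formula of Lemma~\ref{sizeepg}; the heart of Lemma~\ref{extendedproj} is a normal-form argument on matrices: starting from the representation guaranteed by Corollary~\ref{framenice} in which $A[E(M'\del C)]$ lies over $\GF(q)$, one observes that the $\GF(q)$-row space of the $r(M')\times k$ block $A[C]$ has dimension at most $2k$, performs $\GF(q)$-row operations to push all nonzero entries of $A[C]$ into the first $2k$ rows, and then contracts, producing a representation of $M'\con C$ with at most $k$ rows over $\GF(q^2)\setminus\GF(q)$; this shows every simple rank-$n$ member of $\cP_{q,k}$ is literally a restriction of $\PG^{(k)}(n-1,q)$, a structural statement stronger than the density bound. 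You instead replace that lemma by a direct linear-algebraic count inside $\GF(q^2)^r$: you pass to the $\GF(q)$-subspace $T$ representing $\si(N)$, decompose the point count as $\tfrac{q^d-1}{q-1}-q\tfrac{q^{2m}-1}{q^2-1}$ with $d=\dim_{\GF(q)}T$ and $m=\dim_{\GF(q^2)}T^\ast$, and supply the crucial lower bound $m\ge d-r$ via the observation that $T\cap\omega T$ is closed under multiplication by $\omega$ (so is a $\GF(q^2)$-subspace of $T$) together with modularity and the identity $T+\omega T=\GF(q^2)T=V$. That $T\cap\omega T$ argument is the genuine new idea here; it is not present in the paper, and it buys you a self-contained proof of the density bound without first proving the stronger restriction statement. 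Both approaches rest equally on Theorem~\ref{pgunique} (essential uniqueness of the representation of $\PG(n-1,q)$) to put the columns of the projective-geometry restriction into a $\GF(q)$-form. One small caveat you already flagged: when $M'\del C$ is not spanning in $M'$, Corollary~\ref{framenice} must be applied inside $\cl_{M'}(E(M'\del C))$ before projecting, which is why your bound on $d$ runs $r\le d\le r(M')=r+k$ rather than requiring $d=r+k$; the paper's version of Lemma~\ref{extendedproj} implicitly faces the same point and also resolves it by allowing $A[C]$ to have $\GF(q)$-row rank strictly less than $2k$. I would also note that the monotonicity computation $g(d+1)-g(d)=q^d-q^{2(d-r)+1}\ge 0$ holds for $d\le 2r-1$ and with equality at $d=2r-1$, so your claim that $g$ is non-decreasing on $r\le d\le 2r$ is exactly right and suffices since $d\le r+k\le 2r$ once $n$ is large.
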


The qualitative behaviour of growth rate functions is elegantly summarised by the `Growth Rate Theorem', a combination of results of Geelen, Kabell, Kung and Whittle, proved in [\ref{gkw}]. All of our results treat classes of matroids satisfying condition (\ref{expgrowth}) of this theorem.

\begin{theorem}[Growth Rate Theorem]\label{growthrate}
	If $\cM$ is a minor-closed class of matroids, then either
	\begin{enumerate}
		\item There exists $c \in \bR$ so that $h_\cM(n) \le cn$ for all $n \ge 0$, or 
		\item $\cM$ contains all graphic matroids, and there exists $c \in \bR$ so that $h_\cM(n) \le cn^2$ for all $n \ge 0$, or
		\item\label{expgrowth} There is a prime power $q$, and $c \in \bR$, so that $\cM$ contains all $\GF(q)$-representable matroids and $h_\cM(n) \le cq^n$ for all $n \ge 0$. 
		\item\label{unboundedgrowth} $\cM$ contains all simple rank-$2$ matroids, and $h_{\cM}(n) = \infty$ for all $n \ge 2$. 
	\end{enumerate}
\end{theorem}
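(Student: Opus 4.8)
The plan is to prove the four-way dichotomy by first isolating the degenerate alternative, then using Kung's line-bound to force an exponential upper bound in every remaining case, and finally --- the substantive part --- extracting a large structured minor whenever the density outgrows one of the lower thresholds. If $U_{2,\ell}\in\cM$ for every integer $\ell\ge 2$ then, since $\cM$ is minor-closed, it contains every simple rank-$2$ matroid, so $h_\cM(2)=\infty$ and hence $h_\cM(n)=\infty$ for all $n\ge 2$, which is alternative (4); so assume there is an $\ell$ with $U_{2,\ell+2}\notin\cM$, meaning no line in any matroid of $\cM$ has more than $\ell+1$ points. For such $\cM$ I would prove, by induction on $n$, that $h_\cM(n)\le\frac{\ell^n-1}{\ell-1}$: if $M\in\cM$ is simple of rank $n$ and $e\in E(M)$, then the rank-$2$ flats of $M$ through $e$ correspond to the points of $\si(M\con e)$, each carries at most $\ell$ elements other than $e$, and every point of $M$ other than $e$ lies on exactly one of them, so $|M|\le 1+\ell\,|\si(M\con e)|\le 1+\ell\,h_\cM(n-1)$. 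Thus an exponential upper bound always holds, and the remaining task is to show that superlinear growth forces $M(K_n)\in\cM$ for all $n$ (pushing us into alternative (2) or (3)) and that super-quadratic growth additionally forces the $\GF(q)$-representable matroids into $\cM$ for some prime power $q$, with a matching upper bound $h_\cM(n)\le cq^n$.

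The heart is the extraction of structured dense minors. Suppose $h_\cM$ grows faster than the relevant threshold $g$ (linear, quadratic, or --- for the final refinement --- faster than $cq^n$ for every prime power $q$), so $\cM$ has simple members of unbounded rank $r$ with more than $\alpha\,g(r)$ points. I would first invoke a vertical-connectivity reduction --- a dense simple matroid has, for any prescribed $k$, a minor that is vertically $k$-connected and still dense relative to the same yardstick $g$ --- and take $k$ large in terms of the target minor. I would then appeal to the structural principle that density concentrated by high connectivity yields a large structured minor: a suitably connected simple matroid of large rank with no $U_{2,\ell+2}$-minor and with enough points relative to its rank has an $M(K_n)$-minor, while such a matroid with more than $\alpha q^r$ points and no $\PG(t,q')$-minor for any prime power $q'>q$ has (for $q$ chosen maximal subject to this) a $\PG(n,q)$-minor. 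Since $M(K_n)$-minors for all $n$ force $\cM$ to contain every graphic matroid and $\PG(n,q)$-minors for all $n$ force $\cM$ to contain every $\GF(q)$-representable matroid, and since the growth rates $\Theta(n^2)$ of the graphic matroids and $\Theta(q^n)$ of the $\GF(q)$-representable matroids combine with Kung's bound --- and, in the exponential case, with the complementary estimate ``no long line and no $\PG(t,q')$-minor for $q'>q$ implies $O(q^n)$ points'' --- to pin the rate down, the four alternatives follow.

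The main obstacle is entirely this last step: showing that density past the quadratic threshold, in a highly connected long-line-free matroid, produces a projective-geometry minor over a \emph{fixed prime power}. This is the technical core of the Geelen--Kung--Whittle argument; it requires the connectivity reductions to localise where the density lives, Ramsey-type arguments to force a consistent local geometry, and representation-theoretic arguments in the spirit of Kahn--Kung to recognise that local structure as projective space over a specific $\GF(q)$. One must also rule out intermediate growth rates --- establishing the jump from linear to quadratic and from quadratic to exponential --- and identify exactly which $q$ occurs, which is where the complementary upper bound above is needed.
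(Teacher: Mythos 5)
The paper does not prove this theorem; it states it as a black box, citing Geelen, Kung, and Whittle [5] (together with Geelen--Kabell) and then builds on it. There is therefore no proof in the paper to compare your attempt against, and the relevant question is whether your sketch would stand on its own.

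It would not, and you say so yourself. The parts you actually carry out are fine: the disposal of alternative~(4) by observing that $\{U_{2,\ell}:\ell\ge 2\}\subseteq\cM$ already gives $h_\cM(2)=\infty$ is correct, and your induction $|M|\le 1+\ell\,\elem(M\con e)$ is exactly Kung's proof of Theorem~\ref{kungdensity} and correctly yields the universal exponential bound once some $U_{2,\ell+2}$ is excluded. But everything after that is an outline of what a proof would have to do, not an argument. ``Density concentrated by high connectivity yields a large structured minor'' is not a lemma you can invoke; it \emph{is} the Growth Rate Theorem, and the vertical-connectivity reduction, the clean-up of long lines, the Ramsey-type localisation, and the Kahn--Kung-style recognition of a projective plane over a fixed $\GF(q)$ are each substantial results in their own right that you name rather than prove. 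You also gesture at but do not establish the two complementary upper bounds needed to make the trichotomy exhaustive: that the absence of $M(K_n)$-minors forces $h_\cM(n)\le cn$, and that the absence of $\PG(t,q')$-minors for all $q'>q$ and large $t$ forces $h_\cM(n)\le cq^n$. As a map of the ideas in [5] and [1] this is accurate and shows good understanding; as a proof it has, at its core, a hole the size of those two papers.
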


 Theorems~\ref{mainresult} and~\ref{mainresult1} are together equivalent to the following substantial refinement of the $\GF(q^2)$-representable case of the growth rate theorem, determining $h_{\cM}$ exactly for all large $n$ in the case where $\cM$ satisfies (\ref{expgrowth}). 
\begin{theorem}\label{growthraterefinement}
	If $q$ is a prime power, and $\cM$ is a proper minor-closed class of the $\GF(q^2)$-representable matroids, then either:
	\begin{enumerate}
		\item There exists $c \in \bR$ so that $h_\cM(n) \le cn$ for all $n \ge 0$, or 
		\item $\cM$ contains all graphic matroids, and there exists $c \in \bR$ so that $h_{\cM}(n) \le cn^2$ for all $n \ge 0$, or 
		\item There exists an integer $k \ge 0$ so that $\cP_{q,k} \subseteq \cM$, and $h_{\cM}(n)=\frac{q^{n+k}-1}{q-1} - q\left(\frac{q^{2k}-1}{q^2-1}\right)$ for all large $n$, or 
	\end{enumerate}
\end{theorem}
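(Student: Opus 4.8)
The plan is to obtain Theorem~\ref{growthraterefinement} as a short deduction from Theorems~\ref{mainresult} and~\ref{mainresult1}; indeed the statement is introduced as being equivalent to the two of them together, so almost all of the work is already done. First I would set aside the classes $\cM$ that fail to contain every simple $\GF(q)$-representable matroid: each such $\cM$ is covered by the dichotomy's remaining alternative (the one asserting that $\cM$ does not contain all simple $\GF(q)$-representable matroids), and nothing remains to be proved for it. We may therefore assume that $\cM$ contains all simple $\GF(q)$-representable matroids. Since the rank-$n$ projective geometry over $\GF(q)$ then lies in $\cM$, we have $h_\cM(n) \ge \frac{q^n-1}{q-1}$ for every $n$, so $h_\cM$ grows exponentially; in particular neither bound $h_\cM(n) \le cn$ nor $h_\cM(n) \le cn^2$ holds for any constant $c$, so the first two outcomes are unavailable and we aim to establish the third.

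Under this assumption $\cM$ is exactly the sort of class to which Theorems~\ref{mainresult} and~\ref{mainresult1} apply, being a proper minor-closed subclass of the $\GF(q^2)$-representable matroids that contains all simple $\GF(q)$-representable matroids. Theorem~\ref{mainresult} supplies an integer $k \ge 0$ with $\cP_{q,k} \subseteq \cM$ and $h_\cM(n) = h_{\cP_{q,k}}(n)$ for all large $n$, and Theorem~\ref{mainresult1} supplies nonnegative integers $k_{\cM}$ and $n_{\cM}$ with $h_\cM(n) = \frac{q^{n+k_{\cM}}-1}{q-1} - q\left(\frac{q^{2k_{\cM}}-1}{q^2-1}\right)$ for all $n \ge n_{\cM}$. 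To reconcile the two I would appeal to the paper's evaluation of $h_{\cP_{q,k}}$ (the very purpose of characterising its densest members), which gives $h_{\cP_{q,k}}(n) = \frac{q^{n+k}-1}{q-1} - q\left(\frac{q^{2k}-1}{q^2-1}\right)$ for all large $n$. Then for all large $n$ the two closed forms $\frac{q^{n+k}-1}{q-1} - q\left(\frac{q^{2k}-1}{q^2-1}\right)$ and $\frac{q^{n+k_{\cM}}-1}{q-1} - q\left(\frac{q^{2k_{\cM}}-1}{q^2-1}\right)$ coincide; their difference equals $\frac{q^n\left(q^{k}-q^{k_{\cM}}\right)}{q-1}$ up to an additive constant independent of $n$, which is unbounded unless $k = k_{\cM}$. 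Hence $k = k_{\cM}$, and with this common value the third outcome holds exactly as stated: $\cP_{q,k} \subseteq \cM$, and $h_\cM(n)$ equals the displayed expression for all large $n$.

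I would not expect a genuine obstacle within this deduction; all of the difficulty has been pushed into Theorems~\ref{mainresult} and~\ref{mainresult1} --- the structural reduction of an arbitrary exponentially dense $\cM$ to a class $\cP_{q,k}$, the construction of extremal rank-$r$ members, and the matching upper bound on $|M|$ for $M \in \cP_{q,k}$. The single point here that calls for any care is the identification $k = k_{\cM}$, i.e.\ the fact that the explicit function of Theorem~\ref{mainresult1} is genuinely $h_{\cP_{q,k}}(n)$ for large $n$; this rests on the evaluation of $h_{\cP_{q,k}}$, hence on the description of the densest rank-$n$ matroids in $\cP_{q,k}$, which is the combinatorial core of the paper rather than of this reduction.
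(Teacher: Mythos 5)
Your handling of the case where $\cM$ contains all simple $\GF(q)$-representable matroids is correct and matches the paper's approach, which simply presents Theorem~\ref{growthraterefinement} as equivalent to Theorems~\ref{mainresult} and~\ref{mainresult1} without giving it a separate argument. Your reconciliation $k = k_{\cM}$ via the closed forms also works, though the paper makes it automatic: Theorem~\ref{mainresult1} follows from Theorem~\ref{mainresult} by substituting $h_{\cP_{q,k}}(n) = |\PG^{(k)}(n-1,q)|$ (Lemmas~\ref{extendedproj} and~\ref{sizeepg}), so $k_{\cM}$ is that $k$ by construction.

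The gap is in your opening move. As written, Theorem~\ref{growthraterefinement} has exactly three alternatives --- a linear bound, a quadratic bound with all graphics, and the explicit exponential formula --- and none of them is ``$\cM$ does not contain all simple $\GF(q)$-representable matroids.'' So for such a class you cannot set the case aside as requiring no proof; you must show that alternative (1) or (2) holds. That requires invoking the Growth Rate Theorem (Theorem~\ref{growthrate}): its outcome (4) is excluded because $\cM \subseteq \cU(q^2)$ and Kung's bound applies, but its outcome (3) must also be ruled out, and it is not automatically. If $\cM$ contains all $\GF(q')$-representable matroids for some proper subfield $\GF(q')$ of $\GF(q^2)$ other than $\GF(q)$, then $\cM$ is exponentially dense without fitting alternative (3) for the given $q$. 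For non-prime $q$ such classes exist --- take $q = p^2$ and $\cM$ the class of $\GF(p)$-representable matroids, which is a proper minor-closed subclass of the $\GF(q^2)$-representable matroids with $h_{\cM}(n) = \frac{p^n - 1}{p-1}$, satisfying none of the three listed alternatives. The dangling ``or'' at the end of item (3) suggests that a fourth alternative was intended, and your proof silently assumes one, which is plausibly what the author meant; but a complete proof must either state and justify that missing alternative or show that outcomes (1)--(2) cover the excluded classes, rather than declare there is nothing left to prove.
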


Another consequence of the characterisation of the densest matroids in $\cP_{q,k}$ is a bound on the density of a $\GF(q^2)$-representable matroid with no $\PG(k,q^2)$-minor:

\begin{theorem}\label{mainresult2}
	Let $q$ be a prime power, and $k \ge 0$ be an integer. There is an integer $n_{k,q} \ge 0 $ so that if $M$ is a simple GF($q^2$)-representable matroid of rank at least $n_{k,q}$ with no $PG(k+1,q^2)$-minor, then 
	\[ |M| \le \frac{q^{r(M)+k}-1}{q-1}-q\left(\frac{q^{2k}-1}{q^2-1}\right).\]
	Moreover, this bound is the best possible. 
\end{theorem}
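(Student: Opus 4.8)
The plan is to derive Theorem~\ref{mainresult2} from Theorem~\ref{mainresult1} applied to a suitably chosen class $\cM$. First I would set $\cM$ to be the class of $\GF(q^2)$-representable matroids with no $\PG(k+1,q^2)$-minor. This is minor-closed, it is a proper subclass of the $\GF(q^2)$-representable matroids (since $\PG(k+1,q^2)$ itself is excluded), and it contains every simple $\GF(q)$-representable matroid, because a projective geometry $\PG(n,q)$ has no $\PG(k+1,q^2)$-minor: any $\PG(k+1,q^2)$-minor of a $\GF(q)$-representable matroid would be $\GF(q)$-representable, which is false for $k\ge 0$. Hence Theorem~\ref{mainresult1} supplies nonnegative integers $k_{\cM}$ and $n_{\cM}$ with $h_{\cM}(n) = \frac{q^{n+k_{\cM}}-1}{q-1} - q\big(\frac{q^{2k_{\cM}}-1}{q^2-1}\big)$ for $n \ge n_{\cM}$, and Theorem~\ref{mainresult} gives an integer $k'\ge 0$ with $\cP_{q,k'} \subseteq \cM$ and $h_{\cM}(n) = h_{\cP_{q,k'}}(n)$ for all large $n$. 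Matching the two expressions forces $k_{\cM} = k'$ (the function $k \mapsto \frac{q^{n+k}-1}{q-1} - q\frac{q^{2k}-1}{q^2-1}$ is strictly increasing in $k$ for large $n$), so I will just write $k_{\cM}$ for this common value.

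The substance of the argument is then to show $k_{\cM} = k$, i.e. that excluding $\PG(k+1,q^2)$ corresponds to exactly a $k$-element projection of a $\GF(q)$-geometry. For the upper bound on $k_{\cM}$, I would argue that $\cP_{q,k+1} \not\subseteq \cM$: the matroid $\PG(k+1,q^2)$ — or rather $\PG(k+1,q^2)$ viewed as lying in $\cP_{q,k+1}$ via the observation that a projective geometry over $\GF(q^2)$ of rank $n+k+1$ is an $(k+1)$-element projection of a larger $\GF(q)$-geometry, which needs to be checked — lies in $\cP_{q,k+1}$, hence $\cP_{q,k+1}\not\subseteq \cM$, so by the nesting $\cP_{q,0}\subseteq \cP_{q,1}\subseteq \cdots$ (which itself deserves a one-line justification via adding a point to the contracted set) and the maximality built into Theorem~\ref{mainresult}, we get $k_{\cM} \le k$. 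For the lower bound I would exhibit a rank-$r$ matroid in $\cM$ achieving $\frac{q^{r+k}-1}{q-1}-q\frac{q^{2k}-1}{q^2-1}$ points: take the densest matroid in $\cP_{q,k}$ furnished by the characterisation underlying Theorem~\ref{mainresult1}, and verify it has no $\PG(k+1,q^2)$-minor — contracting the $k$-element set recovers a $\GF(q)$-geometry with no $\PG(k+1,q^2)$-minor at all, and a minor of the whole matroid producing $\PG(k+1,q^2)$ would, after accounting for the at most $k$ contracted elements, yield too large a $\GF(q^2)$-projective minor of a $\GF(q)$-geometry; quantifying this is where I would be careful. This shows $k_{\cM}\ge k$ and simultaneously that the bound is attained, proving the ``best possible'' clause.

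The main obstacle I anticipate is the clean identification of the excluded-minor condition with the projection parameter: specifically, proving that a $\GF(q^2)$-representable $k$-element projection of a $\GF(q)$-geometry has no $\PG(k+1,q^2)$-minor, and conversely that any $\GF(q^2)$-representable matroid with no $\PG(k+1,q^2)$-minor is, for density purposes in the large-rank regime, no denser than $\cP_{q,k}$. The forward direction should follow from a rank count: if $N = M/C$ with $|C| \le k$ and $M\del C$ a $\GF(q)$-geometry, any minor isomorphic to $\PG(k+1,q^2)$ uses at most $k$ contractions coming from $C$, so its preimage lives inside a rank-$(\text{something})$ restriction of a $\GF(q)$-geometry, forcing a $\PG(j,q^2)$-minor of $\PG(\ell,q)$ with $j \ge 1$, a contradiction. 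The converse is exactly the content already packaged in Theorems~\ref{mainresult} and~\ref{mainresult1}, so morally no new work is needed there — the real task is bookkeeping with the parameters $k$, $k_{\cM}$ and $n_{k,q}$, and I would set $n_{k,q} := \max(n_{\cM}, N)$ where $N$ is large enough that Theorem~\ref{mainresult}'s ``for all large $n$'' and the above nesting arguments all take effect.
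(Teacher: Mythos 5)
Your proposal matches the paper's approach: apply Theorem~\ref{mainresult1} (or~\ref{mainresult}) to the class $\cM$ of $\GF(q^2)$-representable matroids with no $\PG(k+1,q^2)$-minor, and pin down $k_{\cM}=k$ by combining Corollary~\ref{squarefieldpg} and Lemma~\ref{extendedproj} (which give $\PG^{(k+1)}(k+1,q)\cong\PG(k+1,q^2)\in\cP_{q,k+1}$, so $k_{\cM}\le k$) with Lemma~\ref{nobigpg} (which is precisely the rank-count argument you sketch showing $\PG^{(k)}(n-1,q)$ has no $\PG(k+1,q^2)$-minor, giving $\cP_{q,k}\subseteq\cM$ and the extremal example). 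One small correction to a parenthetical you flag yourself: it is not true that $\PG(n+k,q^2)$ is a $(k+1)$-element projection of a $\GF(q)$-geometry for general $n$; what is true and needed is only the rank-$(k+2)$ case $\PG(k+1,q^2)\cong\PG^{(k+1)}(k+1,q)$ from Corollary~\ref{squarefieldpg}.
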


	The theory we establish imposes severe limitations on the extremal behaviour of exponentially dense classes of $\GF(q^2)$-representable matroids, and thus also gives some interesting corollaries regarding growth rate functions of naturally occuring classes of this sort.

\begin{theorem}\label{maincor1}
	Let $q$ be a prime power. There exists an integer $n_q \ge 0$ so that if $j \ge 3$ is an odd number, and $\cM$ is the class of matroids representable over both $\GF(q^2)$ and $\GF(q^j)$, then
	\[ h_{\cM}(n) = \frac{q^{n+1}-1}{q-1}-q \]
	for all $n \ge n_q$. 
\end{theorem}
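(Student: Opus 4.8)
The plan is to sandwich $h_{\cM}(n)$ between matching bounds, both uniform in the odd integer $j\ge 3$; the common value $\frac{q^{n+1}-1}{q-1}-q$ is precisely the $k=1$ instance of the bound of Theorem~\ref{mainresult2}, so the point is to show that the extremal behaviour for the class of $\GF(q^2)$-representable matroids with no $\PG(2,q^2)$-minor is already attained inside $\cM$. For the upper bound, note that every matroid in $\cM$ is $\GF(q^j)$-representable, hence so is each of its minors; since $j$ is odd, $\GF(q^2)$ is not a subfield of $\GF(q^j)$, and as a projective geometry of rank at least $3$ is representable only over fields that contain its coordinate field as a subfield, $\PG(2,q^2)$ is not $\GF(q^j)$-representable. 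So no member of $\cM$ has a $\PG(2,q^2)$-minor, and $\cM$ is contained in the class of $\GF(q^2)$-representable matroids with no $\PG(2,q^2)$-minor. Theorem~\ref{mainresult2} with $k=1$ then gives an integer $n_{1,q}$, depending only on $q$, so that every simple $\GF(q^2)$-representable matroid $M$ with no $\PG(2,q^2)$-minor and $r(M)\ge n_{1,q}$ has $|M|\le\frac{q^{r(M)+1}-1}{q-1}-q$; since this is increasing in $r(M)$, and simple $\GF(q^2)$-representable matroids of rank below $n_{1,q}$ have size bounded by a constant depending only on $q$, it follows that $h_{\cM}(n)\le\frac{q^{n+1}-1}{q-1}-q$ once $n\ge n_q$ for a constant $n_q$ depending on $q$ only --- in particular not on $j$.

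For the lower bound I construct, for each $n\ge 2$, a single rank-$n$ matroid $N_n$ that is representable over $\GF(q^j)$ for every integer $j\ge 2$. Fix a line $\ell$ of $\PG(n,q)$. Given $j\ge 2$, regard $\PG(n,q)$ as the standard $\GF(q)$-subgeometry of $\PG(n,q^j)$, choose a point $C$ on the line of $\PG(n,q^j)$ spanned by $\ell$ with $C\notin\ell$ (possible as $q^j+1>q+1$), and set $N^{(j)}=(\PG(n,q)\cup\{C\})\con C$. Using the identity $\langle W\rangle_{\GF(q^j)}\cap\langle L\rangle_{\GF(q^j)}=\langle W\cap L\rangle_{\GF(q^j)}$ for $\GF(q)$-subspaces $W,L$ of $\GF(q)^{n+1}$, a short computation shows that for $S\subseteq\PG(n,q)$ the point $C$ lies in $\langle S\rangle_{\GF(q^j)}$ exactly when $\ell\subseteq\cl(S)$ in $\PG(n,q)$; hence the rank function of $N^{(j)}$ is $S\mapsto r_{\PG(n,q)}(S)-[\ell\subseteq\cl(S)]$, which does not depend on $j$. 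Thus the matroids $N^{(j)}$ with $j\ge 2$ coincide; call this matroid $N_n$. It is representable over $\GF(q^j)$ for every $j\ge 2$, in particular over both $\GF(q^2)$ and $\GF(q^j)$, so $N_n\in\cM$. Its only nontrivial parallel class is $\ell$, of size $q+1$, so $\si(N_n)$ is a simple rank-$n$ matroid in $\cM$ with $\frac{q^{n+1}-1}{q-1}-q$ elements, giving $h_{\cM}(n)\ge\frac{q^{n+1}-1}{q-1}-q$ for all $n\ge 2$.

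Combining the two estimates yields $h_{\cM}(n)=\frac{q^{n+1}-1}{q-1}-q$ for all $n\ge n_q$, with $n_q$ independent of $j$, as required. The substantive step is the lower-bound construction. Morally, over $\GF(q^2)$ the density of matroids with no $\PG(2,q^2)$-minor is capped by a Baer phenomenon: a single-element projection of $\PG(n,q)$ is forced to collapse an entire line --- the $q+1$ points of $\PG(n,q)$ on the line joining the centre to its $\GF(q^2)/\GF(q)$-Frobenius conjugate --- to a point, whereas no such collapse is forced over $\GF(q^j)$ for $j$ odd, which is exactly why $\GF(q^j)$ on its own admits denser matroids. The construction exploits that placing the centre on the extension of a \emph{prescribed} $\GF(q)$-line makes precisely that line collapse and nothing else, and that this description is purely combinatorial and valid over every $\GF(q^j)$; so the matroid witnessing extremality for ``no $\PG(2,q^2)$-minor'' can be chosen to lie in all the intersection classes simultaneously. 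The remaining ingredients --- non-representability of $\PG(2,q^2)$ over $\GF(q^j)$ for $j$ odd, the reduction to Theorem~\ref{mainresult2}, and the handling of bounded-rank matroids --- are routine.
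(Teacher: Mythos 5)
Your proof is correct and its skeleton is the same as the paper's: the upper bound comes from Theorem~\ref{mainresult2} with $k=1$, once one notes that for odd $j$ the field $\GF(q^j)$ has no $\GF(q^2)$-subfield and hence $\PG(2,q^2)$ is not $\GF(q^j)$-representable, so $\cM$ has no $\PG(2,q^2)$-minor; the lower bound comes from exhibiting a rank-$n$ simple matroid in $\cM$ with $\frac{q^{n+1}-1}{q-1}-q$ elements. Where you differ is in how you produce the lower-bound witness. The paper simply invokes Lemma~\ref{anyfield}, which gives an explicit $\bF$-representation of $\PG^{(1)}(n-1,q)$ for every field $\bF$ with a proper $\GF(q)$-subfield (take columns whose first entry lies in $\{\alpha\omega+\beta:\alpha,\beta\in\GF(q)\}$ for a fixed $\omega\in\bF\setminus\GF(q)$, other entries in $\GF(q)$); the crucial observation in that lemma is that the resulting matroid is independent of $\bF$ and $\omega$. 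You instead construct the same matroid as the projection of $\PG(n,q)$ from a point $C$ on the $\GF(q^j)$-extension of a fixed line $\ell$, and verify directly that the rank function $S\mapsto r_{\PG(n,q)}(S)-[\ell\subseteq\cl(S)]$ is $j$-independent. These are two descriptions of one object: by Lemma~\ref{extendedproj}, $\si(N_n)\cong\PG^{(1)}(n-1,q)$, since both are the unique densest simple rank-$n$ member of $\cP_{q,1}$. So you have, in effect, reproved the relevant half of Lemma~\ref{anyfield} geometrically rather than citing it. One small place where your write-up is actually more careful than the paper's: since $h_\cM(n)$ is a maximum over ranks $\le n$, one must absorb the contribution of ranks below $n_{1,q}$, which forces $n_q$ to be somewhat larger than $n_{1,q}$; the paper sets $n_q=n_{1,q}$ without comment, while you address the bounded-rank regime explicitly. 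This is a routine point, but you are right to mention it.
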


  This second result gives an apparently uncountably large collection of minor-closed classes of matroids, all arising naturally from representability, whose growth rate functions together give a finite set. 

\begin{theorem}\label{maincor2}
	Let $q$ be a prime power. There is a finite set $\fH_q$ of integer-valued functions satisfying the following: let $\cF$ be a set of fields such that $\GF(q^2) \in \cF$, and all fields in $\cF$ have a proper $\GF(q)$-subfield, but not all fields in $\cF$ have a $\GF(q^2)$-subfield. If $\cM$ is the class of matroids representable over all fields in $\cF$, then $h_{\cM} \in \fH_q$. 
\end{theorem}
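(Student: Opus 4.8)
The plan is to show that, for every admissible $\cF$, the growth rate function $h_{\cM}$ is eventually one fixed function independent of $\cF$, and then to control the finitely many possible initial values. Throughout write $\cR(\mathbb K)$ for the class of matroids representable over a field $\mathbb K$, so that $\cM=\bigcap_{\mathbb K\in\cF}\cR(\mathbb K)$. First I would record the basic features of $\cM$: it is minor-closed, being an intersection of minor-closed classes; $\cM\subseteq\cR(\GF(q^2))$ since $\GF(q^2)\in\cF$, so $h_{\cM}(n)\le\frac{q^{2n}-1}{q^2-1}$ for every $n$; $\cM$ contains every simple $\GF(q)$-representable matroid, since each field in $\cF$ has $\GF(q)$ as a proper subfield; and, fixing a field $\bF\in\cF$ with no $\GF(q^2)$-subfield, the matroid $\PG(2,q^2)$ --- which is representable over a field exactly when that field has a $\GF(q^2)$-subfield --- does not lie in $\cR(\bF)\supseteq\cM$, so $\PG(2,q^2)\notin\cM$ and, $\cM$ being minor-closed, no matroid in $\cM$ has a $\PG(2,q^2)$-minor. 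In particular $\cM$ satisfies the hypotheses of Theorem~\ref{mainresult1}.

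The heart of the proof is the claim that $\cP_{q,1}\subseteq\cR(\mathbb K)$ for \emph{every} field $\mathbb K$ in which $\GF(q)$ is a proper subfield; this at once gives $\cP_{q,1}\subseteq\cM$. To prove the claim, take $M\in\cP_{q,1}$, say $M=N\con C$ where $N$ is $\GF(q^2)$-representable, $C=\{e\}$ is a single non-loop of $N$, and $N\del e$ is a rank-$s$ projective geometry over $\GF(q)$. Fix a $\GF(q^2)$-representation of $N$. When $s\ge3$, uniqueness (up to projective equivalence) of the representation of $\PG(s-1,q)$ lets us assume, after a basis change, that the columns of $N\del e$ are the points of the standard $\GF(q)$-subgeometry of $\GF(q^2)^s$, while the cases $s\le2$ are immediate since then $M$ is a point or a line of length at most $q^2+1$. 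Writing $\GF(q^2)=\GF(q)[\alpha]$ and $e=u+\alpha w$ with $u,w\in\GF(q)^s$, I would then verify that for any $\beta\in\mathbb K\setminus\GF(q)$ the matrix over $\mathbb K^s$ given by that same $\GF(q)$-subgeometry together with the column $u+\beta w$ represents $N$: a maximal minor avoiding the last column is unchanged, and a maximal minor using it, expanded along that column, has the form $d_0+\gamma d_1$ with $d_0,d_1\in\GF(q)$ and $\gamma\in\{\alpha,\beta\}$, which vanishes if and only if $d_0=d_1=0$, irrespective of which of $\alpha,\beta$ plays the role of $\gamma$, since neither lies in $\GF(q)$. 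Hence $N$, and therefore its minor $M=N\con\{e\}$, is $\mathbb K$-representable. This ``specialisation'' step is where I expect the real work to lie; the rest of the argument invokes only results already in hand.

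Given $\cP_{q,1}\subseteq\cM$ and the absence of $\PG(2,q^2)$-minors, I would pin down $h_{\cM}$ up to finitely many initial values. For the lower bound, for each $n\ge1$ let $M_n$ be obtained by adjoining to $\PG(n,q)$, inside $\PG(n,q^2)$, a point $e$ that lies off the subgeometry, and then contracting $e$. A Gaussian-binomial count shows that such an $e$ lies on exactly one line of $\PG(n,q^2)$ meeting $\PG(n,q)$ in more than one point (necessarily in exactly $q+1$ points); hence $M_n$ has rank $n$ and its simplification has exactly $\frac{q^{n+1}-1}{q-1}-q$ points. Since $M_n\in\cP_{q,1}\subseteq\cM$, this gives $h_{\cM}(n)\ge\frac{q^{n+1}-1}{q-1}-q$ for all $n\ge1$. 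For the upper bound, since no matroid in $\cM$ has a $\PG(2,q^2)$-minor, Theorem~\ref{mainresult2} with $k=1$ provides an integer $n_{1,q}$, depending only on $q$, such that $|M|\le\frac{q^{r(M)+1}-1}{q-1}-q$ for every simple $M\in\cM$ with $r(M)\ge n_{1,q}$; as $h_{\cM}$ is non-decreasing and $h_{\cM}(n_{1,q}-1)\le\frac{q^{2(n_{1,q}-1)}-1}{q^2-1}$ is bounded in terms of $q$ alone, there is an integer $N_q$, depending only on $q$, with $h_{\cM}(n)\le\frac{q^{n+1}-1}{q-1}-q$ for all $n\ge N_q$. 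Combining the two bounds, $h_{\cM}(n)=\frac{q^{n+1}-1}{q-1}-q$ for every $n\ge N_q$.

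To finish, observe that for each $n$ with $0\le n<N_q$ the integer $h_{\cM}(n)$ lies between $\max\!\bigl(0,\tfrac{q^{n+1}-1}{q-1}-q\bigr)$ and $\tfrac{q^{2n}-1}{q^2-1}$, so the tuple $\bigl(h_{\cM}(0),\dots,h_{\cM}(N_q-1)\bigr)$ can take only finitely many values, all determined by $q$ alone; meanwhile $h_{\cM}(n)=\frac{q^{n+1}-1}{q-1}-q$ for every $n\ge N_q$. Hence $h_{\cM}$ lies in a finite set $\fH_q$ of integer-valued functions depending only on $q$, which is precisely the assertion of Theorem~\ref{maincor2}.
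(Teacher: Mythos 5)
Your proof is correct and follows the same strategy as the paper: establish $\cP_{q,1}\subseteq\cM$ together with $\PG(2,q^2)\notin\cM$, apply Theorem~\ref{mainresult2} for the eventual upper bound, and cap the finitely many remaining values of $h_{\cM}$ using the size of a rank-$n$ $\GF(q^2)$-projective geometry. The only substantive difference is that you reprove the $\mathbb K$-representability of $\cP_{q,1}$ from scratch via a column-by-column specialisation argument and a hands-on lower-bound construction, where the paper simply invokes Lemma~\ref{anyfield}, which packages essentially the same determinant computation applied to the whole column set $Z(n-1,q,1)$ at once; incidentally, your introduction of a threshold $N_q$ possibly larger than $n_{1,q}$ to absorb the contribution of low-rank matroids is a small but genuine tightening of the paper's wording.
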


This suggests the following ambitious conjecture, which states that the collection itself is not uncountable, but finite. 

\begin{conjecture}
	Let $q$ be a prime power. There is a finite set $\mathfrak{M}_q$ of minor-closed classes of matroids satisfying the following: if $\cF$ is a set of fields such that $\GF(q^2) \in \cF$, all fields in $\cF$ have a proper $\GF(q)$-subfield, and not all fields in $\cF$ have a $\GF(q^2)$-subfield, then if $\cM$ is the class of matroids representable over all fields in $\cF$, then $\cM \in \mathfrak{M}_q$.
\end{conjecture}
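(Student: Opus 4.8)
\medskip
\noindent\emph{Proof proposal.} Since this is stated as a conjecture, what follows is a proposed line of attack, with the step I expect to be the main obstacle flagged at the end.

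Fix a set of fields $\cF$ satisfying the hypotheses and write $\cM=\cM(\cF)$ for the class of matroids representable over every field in $\cF$. First I would check that $\cM$ falls under the hypotheses of Theorem~\ref{mainresult}: every simple $\GF(q)$-representable matroid is representable over each extension field of $\GF(q)$, hence over every member of $\cF$, so $\cM$ contains all simple $\GF(q)$-representable matroids; and $\cM$ is a \emph{proper} subclass of the $\GF(q^2)$-representable matroids, because some $K\in\cF$ has no $\GF(q^2)$-subfield and (by unique representability of projective geometries of rank $\ge 4$) $\PG(3,q^2)$ is representable over no such $K$, so $\PG(3,q^2)\notin\cM$. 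Theorems~\ref{mainresult} and~\ref{mainresult1} then supply an integer $k_\cM\ge 0$ with $\cP_{q,k_\cM}\subseteq\cM$ and $h_\cM(n)=\frac{q^{n+k_\cM}-1}{q-1}-q\bigl(\frac{q^{2k_\cM}-1}{q^2-1}\bigr)$ for all large $n$. Since Theorem~\ref{maincor2} shows $h_\cM$ ranges over a finite set as $\cF$ varies, and distinct values of $k$ give eventually distinct such functions, there is a constant $K_q$ with $k_\cM\le K_q$ for every valid $\cF$. Hence it suffices to prove: \emph{for each fixed $k\le K_q$, only finitely many minor-closed classes arise as $\cM(\cF)$ with $k_{\cM(\cF)}=k$.}

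Next I would reduce this to a uniform bound on excluded minors. As $\cM(\cF)$ is contained in the (Geelen--Gerards--Whittle well-quasi-ordered) class of $\GF(q^2)$-representable matroids, its excluded minors split into those that are $\GF(q^2)$-representable --- a finite antichain of $\GF(q^2)$-representable matroids --- and those that are not; the latter always lie inside the fixed finite excluded-minor set of $\GF(q^2)$-representability, so they pose no uniformity problem. The plan is therefore to produce, for each $k\le K_q$, a \emph{single} finite set $Y_{q,k}$ of $\GF(q^2)$-representable matroids containing every $\GF(q^2)$-representable excluded minor of every $\cM(\cF)$ with $k_{\cM(\cF)}=k$ --- equivalently, a bound on the rank of such an excluded minor, since a $\GF(q^2)$-representable matroid of bounded rank has boundedly many elements. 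Given such a $Y_{q,k}$, each $\cM(\cF)$ with $k_{\cM(\cF)}=k$ is exactly the class of $\GF(q^2)$-representable matroids with no minor in some subset of $Y_{q,k}$, so at most $2^{|Y_{q,k}|}$ classes occur; letting $\mathfrak{M}_q$ be the union over $k\le K_q$ of the resulting finite lists then proves the conjecture.

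The crux is the rank bound on excluded minors, and this is where I expect the real work --- and the only essential use of the representability hypothesis --- to lie. Let $N$ be a $\GF(q^2)$-representable excluded minor of $\cM=\cM(\cF)$: every proper minor of $N$ lies in $\cM\supseteq\cP_{q,k}$, while $N$ itself is not representable over some $K\in\cF$. I would attack this by upgrading the structural machinery behind Theorems~\ref{mainresult}--\ref{mainresult1} from a statement about the \emph{densest} members of such a class to one about \emph{all} sufficiently-large-rank $\GF(q^2)$-representable matroids whose proper minors lie in a class containing $\cP_{q,k}$: such a matroid should decompose into a bounded-rank ``core'' together with a $\cP_{q,k}$-like projective part, with its isomorphism type determined by the core up to bounded data. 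The leverage specific to our classes is the identity $\GF(q^2)\cap K=\GF(q)$ (both contain $\GF(q)$, and $K\cap\GF(q^2)$ is a subfield of $\GF(q^2)$ smaller than $\GF(q^2)$, so it equals $\GF(q)$): once $N$ has this structure, the cross-ratio / partial-field obstruction to $K$-representability already appears in a bounded-rank minor, so if $r(N)$ were large one could pass to a proper minor of $N$ that is still not $K$-representable, contradicting that $N$ is an excluded minor. The main obstacle is precisely this upgrade --- obtaining a genuine structure theorem for every large-rank matroid with all proper minors in $\cM$, together with a ``local detectability'' principle for failure of $K$-representability --- since the growth-rate results by themselves control only extremal density, and without the representability hypothesis there is no reason to expect finitely many minor-closed classes sandwiched between $\cP_{q,k}$ and the $\GF(q^2)$-representable matroids at all.
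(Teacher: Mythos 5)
This statement is a \emph{conjecture} in the paper; the paper offers no proof, only the observation that Theorem~\ref{maincor2} (a finite set of growth rate functions) makes the conjecture plausible. So there is no ``paper's own proof'' to compare against, and I will assess your proposal on its own terms.

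Your reduction of the conjecture to a uniform rank bound on the $\GF(q^2)$-representable excluded minors of $\cM(\cF)$ is sound. Two remarks on the preliminary steps. First, you do not need the roundabout argument via Theorem~\ref{maincor2} to bound $k_\cM$: the proof of that theorem in the paper shows directly that $\PG^{(1)}(n-1,q)\in\cM$ for all $n$ while $\PG(2,q^2)\notin\cM$, so in fact $k_\cM=1$ for every admissible $\cF$. Second, your split of excluded minors into $\GF(q^2)$-representable ones and others, with the latter confined to the fixed finite excluded-minor set for $\GF(q^2)$-representability, is correct but invokes Rota's conjecture for $\GF(q^2)$ (Geelen--Gerards--Whittle), which is substantially heavier machinery than anything in the paper and was not available to the paper's author. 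It is, however, legitimate to use now, and once you invoke it, the reduction to ``bound the rank of the $\GF(q^2)$-representable excluded minors uniformly in $\cF$'' is exactly right: a $\GF(q^2)$-representable excluded minor is simple and cosimple, so bounded rank gives bounded size and hence membership in a fixed finite set, and the finitely many subsets of this set enumerate the possible classes.

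The genuine gap, which you identify honestly, is that rank bound itself. Your proposed route --- upgrade the extremal machinery from a statement about densest members of $\cM$ to a structure theorem for \emph{every} large-rank matroid with all proper minors in $\cM$, then argue that the obstruction to $K$-representability is ``locally detectable'' in a bounded-rank minor --- is a reasonable heuristic, and the identity $\GF(q^2)\cap K=\GF(q)$ is indeed the right algebraic leverage. But nothing in the paper (which controls only extremal density, not structure of near-excluded-minors) gets you there, and I am not aware of a proof of such a structure theorem in this setting. As things stand this is a plan, not a proof; the conjecture remains open, and you have correctly located where the work lies.
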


All of our main results apply only in the $\GF(q^2)$-representable setting. However, adaptations of our techniques should apply more generally; we believe that exponentially dense growth rate functions should have similar behaviour for all minor-closed classes of matroids:

\begin{conjecture}\label{mainconj}
	If $\cM$ is a minor-closed class of matroids satisfying condition (\ref{expgrowth}) of Theorem~\ref{growthrate} for some prime power $q$, then there exists an integer $k \ge 0$ and an integer $d$ with $0 \le d \le \frac{q^{2k}-1}{q^2-1}$ such that	
	\[ h_{\cM}(n) = \frac{q^{n+k}-1}{q-1} - qd\] 
	for all large $n$. 
\end{conjecture}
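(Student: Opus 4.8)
The plan is to adapt the argument behind Theorems~\ref{mainresult} and~\ref{mainresult1} to an arbitrary minor-closed class, with a structure theorem playing the role that $\GF(q^2)$-representability plays there.

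\emph{Set-up and the two ingredients.} Condition (\ref{expgrowth}) of Theorem~\ref{growthrate} tells us that $\cM$ contains every $\GF(q)$-representable matroid and that $h_{\cM}(n) \le cq^n$ for some constant $c$; the latter forces $\cM$ to omit $\PG(m,q^2)$ as a minor for some fixed $m$. Call a matroid a \emph{$j$-geometry-projection} if it has the form $N\con C$ with $N\del C$ a projective geometry over $\GF(q)$ and $C$ independent of rank $j$, and take $k$ to be the largest $j$ for which, for all large $n$, $\cM$ contains a rank-$n$ $j$-geometry-projection with at least half the maximum possible number of points; the bound $h_{\cM}(n) \le cq^n$ keeps $k$ finite, and $\PG(n-1,q)$ (a $0$-geometry-projection) gives $k \ge 0$. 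Two inputs are then needed. \textbf{(A)} A structure theorem: \emph{any matroid of large enough rank in a minor-closed class that contains all $\GF(q)$-representable matroids and has no $\PG(m,q^2)$-minor is, after a perturbation of bounded rank, a restriction of a $j$-geometry-projection for some $j$ bounded in terms of $q$ and $m$}; moreover a matroid which is within a constant factor of extremal density is, after such a perturbation, a $j$-geometry-projection itself (not a proper restriction). \textbf{(B)} An extremal count: writing $e_{\cM}(n)$ for the largest number of points in a rank-$\le n$ member of $\cM$ that is a $j$-geometry-projection with $j \le k$, one shows — by the calculation underlying Theorem~\ref{mainresult1}, now applied to whichever such projections actually lie in $\cM$ — that $e_{\cM}(n) = \frac{q^{n+k}-1}{q-1} - qd$ for all large $n$, where $d$ is a nonnegative integer with $d \le \frac{q^{2k}-1}{q^2-1}$, the bound coming from the fact that the points lost in a $k$-geometry-projection all lie in one flat of rank $k$.

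\emph{Matching the bounds.} The lower bound $h_{\cM}(n) \ge e_{\cM}(n)$ holds because, by the definition of $k$ and minor-closedness, $\cM$ contains a $j$-geometry-projection ($j\le k$) of rank exactly $n$ attaining $e_{\cM}(n)$, for every large $n$. For the upper bound, suppose $h_{\cM}(n) > e_{\cM}(n)$ for arbitrarily large $n$, and for each such $n$ pick a witness $M \in \cM$ with $r(M) = n$; since $|M| > e_{\cM}(n) = \Theta(q^{n+k})$, part (A) makes $M$ a bounded-rank perturbation of a $j_M$-geometry-projection, the values $j_M$ being uniformly bounded. If $j_M \le k$, then running the count of (B) through the perturbation (using $|N| \le \frac{q^{r(N)}-1}{q-1}$ for $\GF(q)$-representable $N$) gives $|M| \le e_{\cM}(n)$ once $n$ is large, a contradiction; so $j_M \ge k+1$ for arbitrarily large $n$, and pigeonhole fixes a single value $j^\ast \ge k+1$ occurring at unbounded rank. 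Taking sub-geometries of these witnesses then exhibits, for every large $n$, a rank-$n$ $j^\ast$-geometry-projection of nearly maximal density in $\cM$, contradicting the maximality of $k$. Hence $h_{\cM}(n) = e_{\cM}(n) = \frac{q^{n+k}-1}{q-1} - qd$ for all large $n$, with $0 \le d \le \frac{q^{2k}-1}{q^2-1}$ as required.

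\textbf{The main obstacle.} Ingredient (B) and the lower bound are, in essence, what this paper proves in the $\GF(q^2)$-representable case, where one may exploit the rigidity of $\GF(q)$-subgeometries of $\GF(q^2)$-space; the real difficulty is ingredient (A) in full generality. If $\cM$ happens to be representable over a fixed finite field one can hope to deduce (A) from the Matroid Minors Structure Theorem of Geelen, Gerards and Whittle, though even then some work is needed to extract the ``perturbed geometry-projection'' conclusion and to rule out the other templates that theorem permits (Dowling geometries over larger groups, geometries over other fields, spikes, swirls, and the like). For classes not representable over any finite field no comparable structural description is currently available, which is exactly why the statement is still a conjecture; the $\GF(q^2)$-representable results established here should be seen as evidence that everything else goes through once (A) is known.
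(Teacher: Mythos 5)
The statement you have chosen is Conjecture~\ref{mainconj}; the paper offers no proof of it, and explicitly says that its results are confined to the $\GF(q^2)$-representable setting. Your submission is a plausible research programme rather than a proof, and you say as much yourself in your last paragraph. The decisive gap is the structure theorem you call ingredient (A). Condition (\ref{expgrowth}) of Theorem~\ref{growthrate} says that $\cM$ \emph{contains} all $\GF(q)$-representable matroids and has $h_{\cM}(n)\le cq^n$; it does \emph{not} say that $\cM$ is representable over any fixed finite field, so for a general such $\cM$ there is no analogue of the Matroid Minors Structure Theorem to appeal to, and no known theorem asserting that dense members of $\cM$ are bounded-rank perturbations of $j$-geometry-projections. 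Even under the stronger hypothesis that $\cM$ consists of $\GF(q')$-representable matroids for a fixed $q'$, extracting the ``perturbed geometry-projection'' conclusion from the template description, ruling out the Dowling/lift templates, and upgrading a near-extremal \emph{restriction} of a projection to an actual projection are each substantial unproven steps.

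Ingredient (B) is also not free. The paper's argument in the $\GF(q^2)$ case leans on a rigidity phenomenon it states explicitly: there is, up to isomorphism, a unique $\GF(q^2)$-representable $k$-element projection of a projective geometry over $\GF(q)$ that is not already a $(k-1)$-element projection, which is what makes Lemma~\ref{extendedproj} work and forces $d=\frac{q^{2k}-1}{q^2-1}$. In the general setting the family of $k$-element projections inside $\cM$ can be far richer, and showing that the extremal count $e_{\cM}(n)$ eventually equals $\frac{q^{n+k}-1}{q-1}-qd$ for a \emph{single} integer $d$ (rather than varying with $n$) is itself a nontrivial extremal/stability claim that your sketch asserts but does not argue. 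Similarly, your definition of $k$ via a ``half the maximum density'' threshold is ad hoc, and the step ``pigeonhole fixes $j^\ast\ge k+1$, then take sub-geometries to contradict maximality of $k$'' needs the missing machinery of (A) to produce actual $j^\ast$-geometry-projections of every large rank inside $\cM$, not merely bounded-rank perturbations of them. None of this is a criticism of the overall plan, which is broadly the shape one expects a proof to take; but as written it is not a proof, and the paper presents the statement as open precisely because (A) and (B) are unresolved.
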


This conjecture is motivated by the belief that the densest rank-$n$ matroids in a class of base-$q$-exponential density should be small projections of projective geometries over $\GF(q)$; the conjectured value for $h_{\cM}(n)$ is the number of points in a rank-$n$ matroid of this sort. 

The subtractive constant $-qd$ can take a range of values. This is a result of the fact that there are many different ways to take $k$-element projections of $\PG(n,q)$, giving rise to minor-closed classes with different growth rate functions. The largest and smallest possible values of $d$ are of particular interest, and we briefly discuss them here.

If $M_n$ is a matroid, and $e \in E(M_n)$, freely placed in the flat $E(M_n)$, satisfies $M_n \del e \cong \PG(n,q)$, then $M_n \con e$ is the $\textit{principal truncation}$ of $\PG(n,q)$. This is a special case of a projection, and the simple rank-$n$ matroid $M_n \con e$ satisfies $|M_n \con e| = \frac{q^{n+1}-1}{q-1}$. Closing the set $\{M_n: n \ge 0\}$ under minors gives a class $\cM$ of matroids with $h_{\cM}(n) = \frac{q^{n+1}-1}{q-1}$. This is an example of a class where $d$ takes the value zero. 

A class where $d = \tfrac{q^{2k}-1}{q^2-1}$ is the class $\cP_{q,k}$ of Theorem~\ref{mainresult}. In fact, the theorem essentially states that if $\cM$ contains only $\GF(q^2)$-representable matroids, then $d$ must take this value. 
This is a consequence of the fact that there is, up to isomorphism, a unique way to take a $\GF(q^2)$-representable, $k$-element projection of a projective geometry over $\GF(q)$ that is not also a $(k-1)$-element projection of such a geometry. For this reason, the $\GF(q^2)$-representable  case we are considering is qualitatively different from the general case, and some techniques we use will not be applicable to any proof of Conjecture~\ref{mainconj}.




\section{Preliminaries}

We assume familiarity with matroid theory, using as a base the notation of Oxley [\ref{oxley}].  Additionally, if $M$ is a matroid, we will write $|M|$ to denote $|E(M)|$, and $\elem(M)$ to denote $|\si(M)|$. Thus, $h_{\cM}(n) = \max\{\elem(M): M \in \cM, r(M) \le n\}$. A \textit{point} is a rank-$1$ flat, and a \textit{line} is a rank-$2$ flat. If $\ell \ge 1$ is an integer, then $\cU(\ell)$ denotes the class of matroids with no $U_{2,\ell+2}$-minor. 


The following beautiful theorem was proved by Kung in [\ref{kung}]:
\begin{theorem}\label{kungdensity}
	If $\ell \ge 2$ is an integer, and $M \in \cU(\ell)$ is a matroid, then 
	\[\elem(M) \le \frac{\ell^{r(M)}-1}{\ell-1}.\]
\end{theorem}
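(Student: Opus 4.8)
The plan is to induct on $r(M)$. Since $\elem(M)$ depends only on the simplification $\si(M)$, and $\si(M)$ is a restriction of $M$ --- hence lies in $\cU(\ell)$, as this class is minor-closed --- we may assume that $M$ is simple. The base cases $r(M) \in \{0,1\}$ are immediate: a simple matroid of rank $0$ is empty while one of rank $1$ has a single element, and $\frac{\ell^0-1}{\ell-1} = 0$ and $\frac{\ell^1-1}{\ell-1} = 1$.

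For the inductive step, suppose $r = r(M) \ge 2$ and fix an element $e \in E(M)$. Two facts drive the argument. First, every line of $M$ has at most $\ell+1$ elements: a line with $\ell+2$ or more elements would yield a $U_{2,\ell+2}$-restriction, contradicting $M \in \cU(\ell)$. Second, because $M$ is simple, every element $f \ne e$ lies on exactly one line of $M$ through $e$, namely $\cl_M(\{e,f\})$, and any two distinct lines through $e$ meet only in $e$ (by submodularity of the rank function). Hence the $|M|-1$ elements other than $e$ are partitioned among the lines of $M$ through $e$, with at most $\ell$ of them on each such line, so $|M|-1 \le \ell\, t$, where $t$ is the number of lines of $M$ through $e$.

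Next I would identify $t$ with $\elem(M \con e)$: contracting $e$ sends each line of $M$ through $e$ to a point of $M \con e$, and this induces a bijection onto the points of $\si(M \con e)$. As $\cU(\ell)$ is minor-closed, $M \con e \in \cU(\ell)$, so the inductive hypothesis gives $t = \elem(M \con e) \le \frac{\ell^{r-1}-1}{\ell-1}$. Combining the two displays yields $|M| = \elem(M) \le 1 + \ell \cdot \frac{\ell^{r-1}-1}{\ell-1} = \frac{\ell^r-1}{\ell-1}$, which completes the induction.

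I do not expect a serious obstacle; the proof is elementary and short. The one step deserving care is the bijective correspondence between the lines of $M$ through $e$ and the points of $\si(M \con e)$, together with the claim that distinct such lines share only $e$ --- this is precisely what makes the count of elements distinct from $e$ additive over the lines through $e$, and it relies on simplicity of $M$. Once that is established, the recursion $\elem(M) \le 1 + \ell\,\elem(M \con e)$ does all the remaining work.
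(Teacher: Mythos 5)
Your proof is correct. Note that the paper does not actually prove this theorem---it is stated with a citation to Kung [\ref{kung}]---so there is no internal proof to compare against. Your argument is the standard one: reduce to simple $M$, induct on rank, observe that the lines of $M$ through a fixed point $e$ partition $E(M)\setminus\{e\}$ into parts of size at most $\ell$ and are in bijection with the points of $M\con e$, and apply the inductive hypothesis to $M\con e \in \cU(\ell)$. This is essentially Kung's original counting argument, and every step you flag as needing care (the bijection between lines through $e$ and points of $M\con e$, and the disjointness of $L\setminus\{e\}$ over lines $L$ through $e$) is handled correctly.
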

This next theorem was proved by Geelen and Kabell in [\ref{gk}], but not in this explicit form:  

\begin{theorem}\label{densitygk}
	There is a real-valued function $f_{\ref{densitygk}}(\beta,\ell,n)$ so that if $\ell \ge 2$ and $n \ge 1$ are integers, $\beta > 1$ is a real number, and $M \in \cU(\ell)$ is a matroid with $\elem(M) \ge f_{\ref{densitygk}}(\beta,\ell,n)\beta^{r(M)}$, then $M$ has a PG$(n-1,q)$-minor for some prime power $q > \beta$. 
\end{theorem}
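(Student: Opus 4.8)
\emph{Proof proposal.} Taking $f_{\ref{densitygk}}(\beta,\ell,n)\ge 1$, Theorem~\ref{kungdensity} gives $\elem(M)\le\tfrac{\ell^{r(M)}-1}{\ell-1}<\ell^{r(M)}\le\beta^{r(M)}\le f_{\ref{densitygk}}(\beta,\ell,n)\,\beta^{r(M)}$ whenever $\beta\ge\ell$, so the hypothesis is then unsatisfiable and we may assume $1<\beta<\ell$. Comparing the same bound with the hypothesis gives $r(M)\ge\log_{\ell/\beta}\bigl((\ell-1)f_{\ref{densitygk}}(\beta,\ell,n)\bigr)$, so by choosing $f_{\ref{densitygk}}$ large we may also assume that $r(M)$ exceeds any prescribed function of $\beta,\ell,n$. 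Finally, since $\PG(n-1,q)\in\cU(\ell)$ forces $q\le\ell$, every projective-geometry minor produced below automatically lies over a field of size at most $\ell$, so the only extra requirement to arrange is $q>\beta$. The plan has three parts: (1) pass to a highly connected minor of comparable exponential density; (2) in that minor, run a density-increment argument forcing the local line structure to become uniform over a field of size larger than $\beta$; and (3) recognise the resulting structure as a projective geometry via a classical coordinatisation theorem, then delete elements to reach a $\PG(n-1,q)$-minor.

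For step (1) I would use the roundness reduction from the proof of the Growth Rate Theorem. If a simple matroid $N$ is not \emph{round} --- that is, if $E(N)$ is the union of two proper flats --- then a short count bounds $\elem(N)$ above by the sum of the numbers of points of two minors of strictly smaller rank. Choosing, among all minors $N'$ of $M$ with $\elem(N')\ge c\,\beta^{r(N')}$ (where $c=c(\beta,\ell,n)\ge 1$ is the constant demanded by steps (2)--(3), and where $f_{\ref{densitygk}}\ge c$ so that $M$ itself qualifies), one of least rank, one checks that it must be round. When $\beta$ is close to $1$ one reduces instead to vertically $k$-connected minors for a suitable $k=k(\beta,\ell,n)$, where the corresponding reduction costs no density; I will not dwell on this point. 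So it suffices to treat round (or sufficiently vertically connected) matroids.

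For steps (2)--(3) I would induct on $n$, the cases $n\le 1$ being trivial. The crucial observation is that a round matroid in $\cU(\ell)$ of exponential density base $\beta$ cannot be assembled solely from short lines: letting $q_0$ be the largest prime power at most $\beta$, if such a matroid had no $U_{2,q_0+2}$-minor then by Theorem~\ref{kungdensity} it would have at most $\tfrac{q_0^{r}-1}{q_0-1}<c\,\beta^{r}$ points (with $r$ its rank), contradicting the density bound; hence it has a minor with a line of more than $q_0+1$ points. Iterating this --- contracting so as to exhaust the short-line part while keeping the matroid round and of base-$\beta$ density --- one reaches a minor in which long lines are abundant, and then a Ramsey-type argument over the finitely many (by Theorem~\ref{kungdensity}) isomorphism types of its bounded-rank minors extracts a minor $N^{*}$, of rank at least $\max(4,n)$, in which every line has exactly $q+1$ points for a single prime power $q>\beta$. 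A classical coordinatisation theorem in the spirit of Veblen--Young --- a simple matroid of rank at least $4$ all of whose lines have exactly $q+1$ points is $\PG(r-1,q)$ --- then identifies $N^{*}$ with a projective geometry over $\GF(q)$, and restricting it to a rank-$n$ flat yields the required $\PG(n-1,q)$-minor.

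The difficulty is concentrated entirely in steps (2)--(3): converting ``dense, round, in $\cU(\ell)$'' into an honest projective-geometry minor over a field of size exceeding $\beta$. Two points are delicate. First, one cannot contract arbitrary points and recurse, because for $\beta<\ell$ the contraction of a generic point degrades the density constant by a factor of order $\ell/\beta>1$; the argument must contract only along carefully chosen sets --- collapsing ``flat'' pieces of the structure so as to preserve base-$\beta$ density --- and controlling this bookkeeping is the heart of the Geelen--Kabell density-increment argument. Second, even a dense minor all of whose lines are long need not visibly contain a projective geometry, so one must certify the precise hypotheses of the coordinatisation theorem: uniform line length, equal to a prime power greater than $\beta$, on a full flat of rank at least $4$; isolating such a flat is the technically hardest step. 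By comparison the supporting ingredients --- Theorem~\ref{kungdensity}, the roundness reduction, and the Ramsey-type pigeonhole over boundedly many bounded-rank minor types --- are routine.
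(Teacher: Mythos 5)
Your proposal takes a fundamentally different route from the paper: the paper's proof is a two-sentence derivation from Theorems~2.1 and~2.2 of Geelen--Kabell, which are cited precisely because they already establish this fact (the statement is introduced in the paper as a result "proved by Geelen and Kabell in [2], but not in this explicit form"). You instead attempt to reprove the Geelen--Kabell theorem from first principles, and as a proof this has real gaps.

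Concretely, the gaps are where you yourself flag them. Your step (1) roundness reduction, as written, fails when $\beta\le 2$: a non-round $N$ covered by two proper flats gives only $\elem(N)\le 2c\beta^{r(N)-1}$, which contradicts $\elem(N)\ge c\beta^{r(N)}$ only if $\beta>2$; your "I will not dwell on this point" is exactly where a workable substitute (the paper in later sections uses \emph{weak} roundness, not vertical connectivity, for this reason) must be supplied and shown to preserve a base-$\beta$ exponential density bound. Your steps (2)--(3) are not arguments but a plan: "iterating this...one reaches a minor in which long lines are abundant" and the subsequent "Ramsey-type argument" extracting a minor in which every line has exactly $q+1$ points for a single $q>\beta$ are precisely the content that Geelen and Kabell's paper supplies, and neither is routine. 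Moreover the coordinatisation step as you state it ("a simple matroid of rank at least $4$ all of whose lines have exactly $q+1$ points is $\PG(r-1,q)$") needs additional modularity/planarity hypotheses that you have not certified your $N^*$ satisfies. In short, you have sketched a proposed proof of the cited external theorem rather than proving the statement in the paper; to match the paper's level of rigour you would either need to flesh out steps (1)--(3) into a full argument, or simply cite the theorems of [2] as the paper does.
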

\begin{proof}
	If $\beta \ge 2$, then let $q' = \lfloor \beta \rfloor$, and $f_{\ref{densitygk}}(\beta,\ell,n)$ to be the integer $\alpha$, depending on $q', n$ and $\ell$, given by Theorem 2.1 of [\ref{gk}]. If $\beta < 2$, then let $c = f_{\ref{densitygk}}(\beta,\ell,n)$ to be an integer large enough such that $c \beta^n \ge a n^m$ for all $n \ge 2$, where $a$ and $m$ are the integers given by Theorem 2.2 of [\ref{gk}]. The result follows from one of these two theorems.   
\end{proof}

A very similar lemma to the following result was proved in [\ref{gn}]. The proof we give is only different in that it deals with a larger range of values for $\mu$. 

\begin{lemma}\label{skewsubset}
	Let $\lambda, \mu$ be real numbers with $\lambda > 0$ and $\mu > 1$. Let $k \ge 0$ and $\ell \ge 2$ be integers, and let $A$ and $B$ be disjoint sets of elements in a matroid $M \in \cU(\ell)$ with $r_M(B) \le k$ and $\elem_M(A) > \lambda \mu^{r_M(A)}$. Then there is a set $A' \subseteq A$ that is skew to $B$ and satisfies $\elem_M(A') > \lambda \left(\frac{\mu-1}{\ell}\right)^{k}\mu^{r_M(A')}$.  
\end{lemma}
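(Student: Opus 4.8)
The plan is to induct on $k$, the bound on $r_M(B)$. When $k = 0$ the set $B$ is contained in a rank-$0$ flat, hence in the closure of the empty set (every element of $B$ is a loop), so $A$ itself is skew to $B$ and we take $A' = A$, with the conclusion reading $\elem_M(A') > \lambda \mu^{r_M(A')}$, which is exactly the hypothesis. For the inductive step, suppose the statement holds for $k-1$ and that $r_M(B) \le k$ with $k \ge 1$. Pick an element $e \in B$ that is not a loop (if $B$ consists only of loops, again $A$ is skew to $B$ and we are done). The idea is to first pass to a subset of $A$ that is skew to $e$, losing only a controlled factor, and then apply the inductive hypothesis to this subset together with $B \del e$, which has rank at most $k-1$.

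First I would carry out the single-element step: given that $\elem_M(A) > \lambda \mu^{r_M(A)}$ and $e$ is a non-loop element skew from nothing in particular, I want $A_0 \subseteq A$ skew to $e$ with $\elem_M(A_0) > \lambda' \mu^{r_M(A_0)}$ for a suitable $\lambda'$. The standard device is to look at the simplification of $M / e$ restricted to (the image of) $A$: the points of $\cl_M(A)$ other than those on lines through $e$ survive, and the number of points of $M$ lying on lines of $M$ through $e$ and meeting $\cl_M(A)$ is controlled by $\cU(\ell)$ — each such line has at most $\ell$ points besides $e$, and the lines through $e$ within $\cl_M(A \cup e)$ correspond to points of $\si(M/e)|A$. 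Quantitatively, one gets $\elem_M(A) \le \ell \cdot \elem_{M/e}(A) + \elem_M(A_0)$ where $A_0$ is (the set underlying) the points of $\cl_M(A)$ skew to $e$; combined with Kung's bound (Theorem~\ref{kungdensity}) applied to $\si(M/e)|A$, which has rank $r_M(A) - 1$ when $e \in \cl_M(A)$ (and $\le r_M(A)$ in general), one extracts that either $\elem_M(A_0) > \tfrac{\mu - 1}{\ell}\,\lambda\,\mu^{r_M(A_0)}$ after adjusting for the rank drop, or a contradiction with the density hypothesis. The bookkeeping with the $\mu-1$ versus $\mu$ and the rank of $A_0$ (which may equal $r_M(A)$ or $r_M(A)-1$) is where the argument needs care, and handling the full range $\mu > 1$ rather than $\mu \ge 2$ (the point flagged as the difference from [\ref{gn}]) is exactly the delicate part — one cannot afford to be wasteful with the factor $\mu - 1$, and must track whether $e$ lies in $\cl_M(A)$.

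Having produced $A_0 \subseteq A$ skew to $e$ with $\elem_M(A_0) > \lambda(\tfrac{\mu-1}{\ell})\,\mu^{r_M(A_0)}$, I would then restrict to $M / e$ (or rather $M | \cl_M(A_0 \cup (B \del e))$ after removing $e$, staying inside $\cU(\ell)$ since minors of $\cU(\ell)$ matroids are in $\cU(\ell)$), where now $B \del e$ has rank at most $k-1$ and $A_0$ is a set of elements with $\elem(A_0)$ still large relative to $\mu^{r(A_0)}$. Applying the inductive hypothesis with the constant $\lambda(\tfrac{\mu-1}{\ell})$ in place of $\lambda$ yields $A' \subseteq A_0 \subseteq A$ skew to $B \del e$ with $\elem_M(A') > \lambda(\tfrac{\mu-1}{\ell})(\tfrac{\mu-1}{\ell})^{k-1}\mu^{r_M(A')} = \lambda(\tfrac{\mu-1}{\ell})^{k}\mu^{r_M(A')}$. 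Since $A'$ is skew to $e$ (as a subset of $A_0$) and skew to $B \del e$, and $e \in B$, a short rank computation gives that $A'$ is skew to all of $B$, completing the induction. The main obstacle, as noted, is the single-element skew-extraction step done tightly enough to work for all $\mu > 1$; the rest is induction and additivity of rank.
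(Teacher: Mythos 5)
Your inductive frame is sound: reduce to a single nonloop $e \in B$, extract a dense subset $A_0 \subseteq A$ skew to $\{e\}$, pass to $M/e$, apply the inductive hypothesis to $A_0$ and $B \setminus \{e\}$ in $M/e$, and close with a short rank computation. This is exactly the ``standard inductive argument'' the paper alludes to. One caveat: that closing computation works only if the skewness of $A'$ to $B \setminus\{e\}$ is taken in $M/e$. Your parenthetical alternative of restricting to $M|\cl_M(A_0 \cup (B\setminus e))$ and deleting $e$ is not equivalent and would fail: being skew to $\{e\}$ and to $B\setminus\{e\}$ separately in $M$ does not imply skewness to $B$ (consider two disjoint pairs spanning $U_{3,4}$), so the inductive step genuinely needs the contraction.

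The real gap, however, is in the single-element extraction step, which you correctly flag as ``the main obstacle'' but do not actually carry out; the contraction-plus-Kung sketch does not close it. The phrase ``points of $\cl_M(A)$ skew to $e$'' conflates pointwise non-parallelism with skewness of a set: almost every point of $\cl_M(A)$ is individually skew to $\{e\}$, but what is needed is a subset $A_0$ with $e \notin \cl_M(A_0)$, a global condition. With that reading of $A_0$ the inequality $\elem_M(A) \le \ell\,\elem_{M/e}(A) + \elem_M(A_0)$ is not meaningful, and applying Kung's bound to $\si(M/e)|A$ only gives $\elem_{M/e}(A) \le \tfrac{\ell^{r_M(A)-1}-1}{\ell-1}$, which bounds $\elem_M(A)$ by roughly $\ell^{r_M(A)}$ and says nothing about where in $A$ a skew set lives. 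The paper's argument uses an entirely different device: after reducing to $E(M)=A\cup\{e\}$ with $A$ minimal satisfying $\elem(M|A) > \lambda\mu^{r_M(A)}$, it picks a corank-$2$ flat $W$ with $e \notin W$ and looks at the $m+1 \le \ell+1$ hyperplanes $H_0,\dotsc,H_m$ through $W$ (with $e \in H_0$), which partition $E(M)\setminus W$. Minimality of $A$ forces $\elem_M(A \cap H_0) \le \lambda\mu^{r(M)-1}$, hence $\elem_M(A \setminus H_0) > \lambda(\mu-1)\mu^{r(M)-1}$, and a majority argument over $H_1,\dotsc,H_m$ yields a hyperplane $H_i$ avoiding $e$ with $\elem_M(A\cap H_i) > \lambda\tfrac{\mu-1}{\ell}\mu^{r(M)-1} \ge \lambda\tfrac{\mu-1}{\ell}\mu^{r_M(A\cap H_i)}$; the set $A'=A\cap H_i$ is skew to $e$ because $H_i$ is a flat not containing $e$. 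The minimality of $A$ and the corank-$2$ partition trick are what make the constants work for all $\mu > 1$, and neither appears in your proposal.
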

\begin{proof}
	We will prove the result by induction on $k$; our base case is when $r_M(B) = 1$. Let $e \in B$ be a nonloop. We may assume that $r(M) \ge 2$, that $A$ is minimal satisfying $\elem(M|A) > \lambda \mu^{r_M(A)}$, and that $E(M) = A \cup \{e\}$. Let $W$ be a flat of $M$ with $e \notin W$, so that $r_M(W) = r(M)-2$. Let $H_0, \dotsc, H_m$ be the hyperplanes of $M$ containing $W$, where $e \in H_0$. The sets $\{H_i - W: 1 \le i \le m\}$ form a partition of $E(M)-W$. Also, $\si(M \con W) \cong U_{2,m+1}$, so $m \le \ell$. 
	
	Minimality of $A$ gives $\elem_M(H_0 \cap A) \le \lambda \mu^{r(M)-1}$, so 
	\[\elem(M|(A-H_0)) > \lambda(\mu-1)\mu^{r(M)-1}.\]
	The union of the hyperplanes $H_1, \dotsc, H_m$ contains $E(M) - H_0$, so by a majority argument, there is some $1 \le i \le m$ such that 
	\[\elem_M(A \cap H_i) \ge \frac{1}{m}\elem(M|(A-H_0)) > \lambda\left(\frac{\mu-1}{\ell}\right)\mu^{r(M)-1}.\]
	Set $A' = A \cap H_i$. Now $A'$ is skew to $e$ and therefore to $B$, and $A'$ has the size we want, completing the base case. The result now follows from a standard inductive argument.
\end{proof}



\section{Unique Representations}

We make a diversion. Our goal in this section is to establish that if $A$ is a matrix with entries in a finite field $\bF$, then a submatrix of $A$ representing a projective geometry over a subfield of $\bF$ can be assumed to only have entries in this subfield. Theorem~\ref{pgunique} is likely equivalent to statements already well-known by projective geometers.

If $q$ is a prime power, we will write $\GF(q)$ for some canonical field with $q$ elements. If $\bF$ has $\GF(q)$ as a subfield, $M$ is an $\bF$-representable matroid, and $R$ is a restriction of $M$, then $R$ is a \textit{$\GF(q)$-represented restriction} of $M$ if there is an $\bF$-representation $A$ of $M$ such that $A[E(R)]$ has entries only in $\GF(q)$. We will consider the case when $\bF = \GF(q^2)$. 

Two matrices $A$ and $B$ with entries in a field $\bF$ are \textit{projectively equivalent} if there is a sequence of elementary row operations and column scalings of $A$ that gives $B$. We say that $B$ is obtained by applying a \textit{projective transformation} to $A$. If this is the case, then $M(A) = M(B)$. 

Theorem~\ref{pgunique} is closely related to the following:

\begin{theorem}[Fundamental Theorem of Projective Geometry]
	Let $q$ be a prime power, and $n \ge 1$ be an integer. The matroid $\PG(n,q)$ is uniquely $\GF(q)$-representable, up to projective equivalence and field automorphisms. 
\end{theorem}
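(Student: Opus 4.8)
The plan is to reduce an arbitrary $\GF(q)$-representation of $\PG(n,q)$ to a normal form pinned down by a \emph{frame}, and then to show that any two representations in normal form differ only by a field automorphism. Fix a basis $b_0,\dots,b_n$ of $E(\PG(n,q))$ together with one further point $b_{n+1}$ for which every $n+1$ of $b_0,\dots,b_{n+1}$ form a basis; such a point exists, since in coordinates one may take $b_i$ to be spanned by $e_i$ and $b_{n+1}$ by $e_0+\dots+e_n$. Given any representation $A$, elementary row operations turn the columns at $b_0,\dots,b_n$ into $e_0,\dots,e_n$, after which the column at $b_{n+1}$ is $\sum_i c_i e_i$ with every $c_i\ne 0$ (by the genericity of the frame); scaling row $i$ by $c_i^{-1}$ and then the column at $b_i$ by $c_i$ makes this column $e_0+\dots+e_n$ without disturbing the others. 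Thus every representation is projectively equivalent to one in this normal form, and since the standard representation is also, after column scalings, in normal form, it is enough to show that two normal-form representations $A_1,A_2$ are related by a field automorphism.

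The case $n=1$ is immediate: $\PG(1,q)\cong U_{2,q+1}$, and a normal-form representation has its columns away from $b_0,b_1,b_2$ equal to the vectors $(1,a)^{\mathsf T}$ with $a\in\GF(q)\setminus\{0,1\}$ in some order, so two such differ only by a permutation of columns. Assume now $n\ge 2$. Given normal-form $A_1,A_2$, define a map $\psi$ on the points of $\PG(n,q)$, regarded as $1$-dimensional subspaces of $\GF(q)^{n+1}$, by sending the span of the $A_1$-column at $p$ to the span of the $A_2$-column at $p$. Because the columns of each $A_i$ are pairwise nonparallel and list every point, $\psi$ is a bijection; and because $M(A_1)=M(A_2)=\PG(n,q)$ on the same labelled ground set, a set of columns of $A_1$ has rank $\le 2$ exactly when the correspondingly-indexed columns of $A_2$ do, so $\psi$ and $\psi^{-1}$ preserve collinearity. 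Hence $\psi$ is a collineation of $\mathbb P(\GF(q)^{n+1})$, and by construction it fixes the standard frame (the classes of $e_0,\dots,e_n$ and of $e_0+\dots+e_n$) pointwise.

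It then remains to show that a collineation of $\PG(n,q)$, $n\ge 2$, fixing the standard frame pointwise is the collineation induced by some field automorphism $\sigma$ of $\GF(q)$; this is the classical Fundamental Theorem of Projective Geometry in its collineation form, which I would either cite or prove along the usual lines: restrict to the planes spanned by triples of frame points to reduce to $\PG(2,q)$, and there observe that the sum and the product of two points on the line through $b_0$ and $b_1$ can be produced by incidence constructions using $b_2$, $b_{n+1}$ and a few auxiliary lines through them, so that a frame-fixing collineation carries sums to sums and products to products, and hence induces a field automorphism on the coordinate line. Given this, $\psi$ agrees on the whole frame with the collineation induced by $\sigma$, and since the stabiliser of a frame in $\mathrm{PGL}_{n+1}(\GF(q))$ is trivial, $\psi$ equals that collineation; unwinding the definition, each $A_2$-column is a scalar times the entrywise $\sigma$-image of the corresponding $A_1$-column, i.e.\ $A_2$ is projectively equivalent to $\sigma(A_1)$, which suffices.

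The main obstacle is precisely the step in the third paragraph: the von Staudt--Hilbert coordinatisation that forces the induced self-map of $\GF(q)$ to respect addition and multiplication. This is where Desargues' theorem, and therefore the hypothesis $n\ge 2$, is genuinely used; the reduction to a frame and all of the matroid-to-linear-algebra translation around it are routine.
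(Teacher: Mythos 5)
The paper does not prove this theorem: it is stated as the classical Fundamental Theorem of Projective Geometry and cited as background motivation for Theorem~3.4, which is the result actually proved (and which carries the stronger hypothesis $n\ge 3$, i.e.\ projective dimension at least $2$). So there is no paper proof to compare against; you are supplying a proof where the paper supplies a citation.

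Your outline for $n\ge 2$ is the standard one (normal form via a frame, reduce to a frame-fixing collineation, von Staudt/Hilbert coordinatisation to recover a field automorphism) and is sound, modulo the acknowledged circularity of citing ``FTPG in collineation form,'' which you rightly flag as the step that needs Desargues and dimension $\ge 2$.

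The $n=1$ case, however, is not ``immediate,'' and your argument there is wrong. Saying that two normal-form representations of $U_{2,q+1}$ ``differ only by a permutation of columns'' does not finish the job: a column permutation is neither a projective transformation (row operations and column \emph{scalings}) nor a field automorphism. In fact the statement fails for $\PG(1,q)$ when $q\ge 5$: after fixing $0,1,\infty$, the residual freedom on the ground set is the full symmetric group on $\GF(q)\setminus\{0,1\}$, of order $(q-2)!$, whereas the group of symmetries you are allowed --- the pointwise stabiliser of $\{0,1,\infty\}$ in $\mathrm{P\Gamma L}_2(q)$ --- is just the Galois group, of order $\log_p q$. These agree only for $q\le 4$. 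The classical theorem is for projective dimension at least $2$ precisely because of this, and the $n\ge 1$ in the paper's statement is best read as a slip (harmless to the paper, since Theorem~3.4 only ever invokes it with $n\ge 3$). You should either restrict to $n\ge 2$ or explain why the low-rank case is out of scope.
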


We require two well-known results, one from matroid theory [\ref{bl}] and one from algebra [\ref{fields}]:

\begin{theorem}\label{binaryunique}
If $M$ is a binary matroid, and $\bF$ is a field, then $M$ has at most one $\bF$-representation, up to projective equivalence.
\end{theorem}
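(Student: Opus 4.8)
The plan is to prove something slightly stronger: that once two $\bF$-representations of a binary matroid $M$ are put in a common normal form, they are literally equal. First I would fix a basis $B$ of $M$ and, using elementary row operations (which are projective transformations and hence preserve the projective-equivalence class), bring any $\bF$-representation of $M$ to standard form $[I_B \mid D]$, with the columns of the identity block indexed by $B$. So it is enough to start from two standard-form representations $[I_B \mid D]$ and $[I_B \mid D']$ with respect to the same basis $B$ and to show that $D$ and $D'$ differ only by independently rescaling the rows (indexed by $B$) and the columns (indexed by $E(M)\setminus B$); this is exactly the assertion that $[I_B\mid D]$ and $[I_B\mid D']$ are projectively equivalent. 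The crucial combinatorial input is that the \emph{support} of a standard representation is determined by $M$ alone: the nonzero entries of column $e$ are precisely the elements of $B$ lying on the fundamental circuit $C_M(e,B)$. Hence $D$ and $D'$ have the same zero/nonzero pattern, which I encode as a bipartite graph $G$ with parts $B$ and $E(M)\setminus B$.

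Next I would normalize and then force the off-pattern entries one at a time. Choose a spanning forest $F$ of $G$ and, by the permitted rescalings, arrange that every entry of $D$, and separately of $D'$, corresponding to an edge of $F$ equals $1$; one may also assume $G$ connected, since otherwise $M$ is a direct sum and one argues componentwise. It now suffices to prove $D = D'$, i.e.\ that each entry on a non-forest edge is forced. Here is the mechanism: every chord (in $G$) of the fundamental cycle of a non-forest edge is again a non-forest edge, and it has a \emph{strictly shorter} fundamental cycle; so if I process the non-forest edges in order of increasing fundamental-cycle length, then when I reach $f=b^*e^*$, within the graph $G[V(O)]$ (where $O$ is the fundamental cycle of $f$) every edge except $f$ already carries a value known to agree in $D$ and $D'$. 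Taking $O^*$ to be the shortest cycle through $f$ inside $G[V(O)]$, this $O^*$ is chordless in $G$ and all of its edges but $f$ are known. Now $[I\mid D_{O^*}]$ represents a minor $N$ of $M$, which is again binary; the support of $D_{O^*}$ is the bipartite adjacency matrix of an even cycle, which over $\GF(2)$ has exactly two permutation terms, both equal to $1$, hence is singular, so the "$D$-columns" of the $\GF(2)$-standard representation of $N$ are dependent. Since dependence of a set of elements is a property of the matroid $N$ alone, the same holds over $\bF$: $\det_{\bF} D_{O^*}=0$. Expanding this determinant along the $f$-entry gives a linear equation for $D_f$ whose coefficient is, up to sign, a product of known nonzero entries; so $D_f$ is pinned down in terms of quantities already agreeing in $D$ and $D'$, and the identical computation shows $D'_f$ takes the same value. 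Iterating over all non-forest edges yields $D=D'$; the degenerate base cases — $G$ a tree, or $G$ a single even cycle — are handled directly, the former being purely formal and the latter being exactly the determinant computation above.

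The main obstacle, I expect, is the bookkeeping that makes this off-pattern forcing watertight: verifying that chords of a fundamental cycle really do have shorter fundamental cycles, that the shortest cycle through $f$ inside $G[V(O)]$ is chordless in all of $G$, precisely which minor $N$ is produced, and that the relevant cofactor is nonzero so $D_f$ is \emph{determined} rather than merely constrained. The one step that is conceptual rather than clerical is the passage between fields: "these columns are dependent" must be recognised as a statement about the matroid $N$ and not about a particular field, so that a rank fact that is transparent over $\GF(2)$ — where standard representations are rigid and $1+1=0$ — may be imported to an arbitrary $\bF$. By contrast, the reduction to standard form, the rescaling normalization, and the direct-sum reduction are all routine; and the fact that $\GF(2)$ has no nontrivial field automorphisms is why the conclusion is uniqueness up to projective equivalence with nothing further.
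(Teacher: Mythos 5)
The paper does not prove this statement; it is quoted as a theorem of Brylawski and Lucas with a citation to [\ref{bl}], so there is no in-paper argument to compare against. Your proof is correct and is essentially the classical one: put both representations in standard form $[I_B\mid D]$, note that the support of $D$ is forced by the fundamental circuits, normalize a spanning forest of the bipartite fundamental graph $G$ to all-$1$ entries by row and column scalings, and then pin down each remaining entry via a determinant identity on a chordless even cycle, with singularity of that square submatrix transferred from $\GF(2)$ as a rank statement about the minor $N$ represented by $[I\mid D_{O^*}]$. The bookkeeping you flag does go through: a chord $h$ of the fundamental cycle $O$ of a non-forest edge $f$ cannot be a forest edge (its endpoints already join by the sub-path of $O\setminus\{f\}$, and the forest has no cycle), and its own fundamental cycle is $h$ plus a proper sub-path of $O\setminus\{f\}$, hence strictly shorter than $O$, so processing non-forest edges by increasing fundamental-cycle length is well-founded; the shortest cycle $O^*$ through $f$ in $G[V(O)]$ is chordless in all of $G$, since any chord would lie in $G[V(O)]$ and split $O^*$ into two shorter cycles, one containing $f$; and the cofactor of the $f$-entry in $\det D_{O^*}$ is the unique $f$-avoiding perfect matching of the chordless cycle, a product of already-known nonzero scalars, so the linear equation really does determine $D_f$ (and forces it to be $\pm 1$, which is why no field automorphism is needed). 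One minor slip: you write ``off-pattern'' where you mean ``non-forest,'' but the intent is clear.
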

\begin{theorem}[Subfield Criterion]\label{subfieldunique}
Let $q$ be a prime power, and $k \ge 1$ be an integer. The field $\GF(q^k)$ has a unique subfield of order $q$. 
\end{theorem}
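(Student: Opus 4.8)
The plan is to pin down the subfield of order $q$ as the zero set of a single polynomial, and then to invoke the fact that a polynomial of degree $d$ over a field has at most $d$ roots in order to get uniqueness for free. Write $q = p^s$ with $p$ prime, so $\GF(q^k) = \GF(p^{sk})$ has characteristic $p$. First I would argue that any subfield $K$ of $\GF(q^k)$ with $|K| = q$ is contained in $S := \{x \in \GF(q^k) : x^q = x\}$: the group $K \setminus \{0\}$ has order $q - 1$, so $x^{q-1} = 1$ for every nonzero $x \in K$ by Lagrange's theorem, while $0^q = 0$. Since $S$ is the solution set of $x^q - x = 0$, a polynomial of degree $q$, we have $|S| \le q = |K|$, forcing $K = S$. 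As $S$ is defined without reference to $K$, this already shows there is at most one subfield of order $q$.

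It then remains to verify that $S$ is itself a subfield of order exactly $q$. The routine part is closure under the field operations: $S$ contains $0$ and $1$ and is closed under multiplication and inverses directly from $x^q = x$, and it is closed under addition and negation because iterating the Frobenius endomorphism $x \mapsto x^p$ exactly $s$ times gives a ring endomorphism $x \mapsto x^q$, so $(x+y)^q = x^q + y^q$ in characteristic $p$. For the count, I would use that $q - 1$ divides $q^k - 1$ (since $q^k - 1 = (q-1)(q^{k-1} + \cdots + 1)$), so $x^{q-1} - 1$ divides $x^{q^k - 1} - 1$, and hence $x^q - x$ divides $x^{q^k} - x = \prod_{a \in \GF(q^k)}(x - a)$; thus all $q$ roots of $x^q - x$ already lie in $\GF(q^k)$, and they are distinct because $x^q - x$ has derivative $-1$ and is therefore separable. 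So $|S| = q$, and $S$ is the unique subfield of order $q$.

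There is no real obstacle here — this is a standard fact from finite field theory — but the one point that genuinely needs the hypothesis is the divisibility $q - 1 \mid q^k - 1$, which is what guarantees that a subfield of order $q$ exists at all, and not merely that it is unique when it exists. An alternative packaging of the uniqueness half is to note that the cyclic group $\GF(q^k)^\times$ has a unique subgroup of each order dividing $q^k - 1$, so $K \setminus \{0\}$ is determined once $|K| = q$ is fixed; I would still use the description of $S$ as the zero set of $x^q - x$ to handle existence and additive closure.
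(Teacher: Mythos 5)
The paper does not actually prove this statement; it is quoted as a known fact from algebra and cited to Lidl and Niederreiter's \emph{Finite Fields}. So there is no ``paper's proof'' to compare against. Taken on its own, your argument is correct and is the standard textbook proof: uniqueness via the observation that any order-$q$ subfield must equal the root set $S$ of $x^q - x$ (a degree-$q$ polynomial, so at most $q$ roots), and existence via closure of $S$ under the field operations (Frobenius for additivity) together with the divisibility $x^q - x \mid x^{q^k} - x$ and separability to get $|S| = q$ exactly. All the steps check out, including the derivative computation giving separability and the factorization $x^{q^k}-x = \prod_{a}(x-a)$ over $\GF(q^k)$. Nothing to flag.
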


\begin{theorem}\label{pgunique}
	If $q$ is a prime power, $n \ge 3$ is an integer, and $\bF$ is a finite extension field of $\GF(q)$, then each representation of $\PG(n-1,q)$ over $\bF$ is projectively equivalent to a representation over $\GF(q)$. 
\end{theorem}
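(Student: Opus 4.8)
The plan is to induct on $n$ and use the abundance of structure inside $\PG(n-1,q)$ when $n$ is large; the content is really about anchoring a representation to a fixed coordinate frame. Fix an $\bF$-representation $A$ of $\PG(n-1,q)$. Since $\PG(n-1,q)$ is simple and we are free to apply projective transformations, I would first put $A$ into a reduced form relative to a basis $B=\{b_1,\dots,b_n\}$: after row operations and column scalings we may assume the columns indexed by $B$ are the standard unit vectors $e_1,\dots,e_n$, and that a chosen spanning circuit (a basis together with one extra element $f$ meeting every $b_i$) has the all-ones vector as the column for $f$. This normalization uses up all the projective freedom and fixes a ``frame''; it is the standard first step and is where $n\ge 3$ (really we only need $n\ge 2$ here, but the induction will force $n\ge 3$) gets used to guarantee the needed circuit exists inside the geometry.

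Next I would restrict attention to rank-$3$ flats. Every rank-$3$ flat of $\PG(n-1,q)$ is a copy of $\PG(2,q)$, and by the Fundamental Theorem of Projective Geometry (stated above) each such flat is uniquely $\GF(q)$-representable up to projective equivalence and field automorphism. The key point is that once three points of a line have been assigned coordinates in $\GF(q)$, the cross-ratio rigidity of $\PG(2,q)$ forces \emph{all} points of that plane to have coordinates in a single subfield isomorphic to $\GF(q)$ — and by the Subfield Criterion (Theorem~\ref{subfieldunique}) that subfield is literally $\GF(q)\subseteq\bF$, not merely an abstract isomorph. So I would argue: after the normalization, every coordinate appearing in a column of a plane containing two of the frame lines lies in $\GF(q)$; then cover $E(\PG(n-1,q))$ by such planes, using that any point lies on a line through two frame points (this is where rank $\ge 3$ and the projective structure are essential), and conclude every column of $A$ has entries in $\GF(q)$.

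To make the covering argument clean I would instead phrase it inductively: the restriction of $A$ to each hyperplane $H\cong\PG(n-2,q)$ is, by induction (base case $n=3$ being the Fundamental Theorem), projectively equivalent to a $\GF(q)$-representation; the subtlety is that the projective transformations fixing different hyperplanes might disagree, so one pins down a common frame lying in the intersection of two hyperplanes — possible since $n-2\ge 1$ leaves enough room — and then glues. Theorem~\ref{binaryunique} is the tool to handle the $q=2$ edge case, where column scalings are trivial and one must appeal to uniqueness of binary representations directly.

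The main obstacle I anticipate is precisely the gluing step: ensuring that the local $\GF(q)$-coordinatizations on overlapping flats can be simultaneously realized by a single global projective transformation of $A$. Concretely, one must show that the transformation normalizing one hyperplane, when further adjusted to normalize an overlapping hyperplane, does not destroy the first normalization — i.e.\ that the stabilizer of a spanning frame acts trivially enough. This is exactly the kind of rigidity the Fundamental Theorem provides on each $\PG(2,q)$, so the strategy is to reduce the global statement to finitely many planar instances and invoke that theorem; executing the bookkeeping of which scalings and row operations are still available at each stage is the part that needs care rather than cleverness.
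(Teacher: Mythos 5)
Your overall strategy---normalize a frame and then propagate $\GF(q)$-coordinates plane by plane---is in the same spirit as the paper's argument, but as written it has two genuine gaps.

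First, your proposed base case ($n=3$) appeals to the Fundamental Theorem of Projective Geometry, but that theorem only asserts uniqueness of \emph{$\GF(q)$-representations} of $\PG(n-1,q)$ up to projective equivalence and automorphism; it says nothing about whether an $\bF$-representation of $\PG(2,q)$, for $\bF$ a proper extension of $\GF(q)$, is projectively equivalent to one over the subfield $\GF(q)$. That is precisely the content of the theorem you are trying to prove, so the induction has no base. The paper avoids induction entirely: it fixes an $M(K_4)$-restriction, normalizes it via Theorem~\ref{binaryunique} so that $A_{x_i}=e_i$ and the six $K_4$-columns appear, and then shows by explicit line intersections (using modularity of flats in $\PG(n-1,q)$) that the set $F$ of scalars $\omega\in\bF$ for which $e_i+\omega e_j$ is parallel to a column is closed under inverses, subtraction, and multiplication, hence is a subfield of $\bF$ of order $q$; Theorem~\ref{subfieldunique} then identifies $F=\GF(q)$ and the closure property propagates to every column.

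Second, the assertion that ``cross-ratio rigidity of $\PG(2,q)$ forces all points of that plane to have coordinates in a single subfield isomorphic to $\GF(q)$'' is exactly the lemma that needs to be proved; it is not a standard citable fact in this form, and it is what the two claims in the paper's proof establish. Without that, the central step is missing. Your concern about the gluing step is well-founded as a worry, but the paper's argument, by working with the single global set $F$ and the $I_n$-submatrix of $A$, never needs to glue local coordinatizations, so the proposal both omits the key lemma and introduces an unresolved patching problem that a more direct approach sidesteps. A smaller point: Theorem~\ref{binaryunique} is not specifically a $q=2$ device---the paper uses it for all $q$ to pin down the $M(K_4)$-frame, not to handle a binary edge case.
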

\begin{proof}
	
	Let $M \cong \PG(n-1,q)$, and $A$ be an $\bF$-representation of $M$; we may assume that $A$ has an $I_n$-submatrix.  We will show that there is a $\GF(q)$-subfield $\bF'$ of $\bF$, so that for any pair of distinct columns $u$ and $v$ of $A$, and $\omega \in \bF'$, the vector $u + \omega v$ is parallel to a column of $A$. This property is preserved by row operations and column scalings, so we will freely apply projective transformations to $A$.  
	
	Let $\{x_1,x_2,x_3\}$ be an independent set of size $3$ in $M$, and $e_1, e_2, e_3$ be the first three vectors in the standard basis of $\bF^n$. The matrix $B$ with column set $\{e_1, e_2, e_3, e_1-e_2,e_2-e_3,e_3-e_1\}$ is a $\bF$-representation of the cycle matroid of $K_4$, and $M$ has an $M(K_4)$-restriction with basis $\{x_1,x_2,x_3\}$, so we may assume by Theorem~\ref{binaryunique} that $A_{x_i} = e_i$ for each $i \in \{1,2,3\}$, and moreover that all columns of $B$ are columns of $A$.  
	
	Let $Z$ be the set of vectors in $\bF^n$ that are parallel to a column of $A$. Since $M \cong \PG(n-1,q)$ is modular, if $L_1$ and $L_2$ are rank-$2$ subspaces of $\bF^n$, each spanned by a pair of vectors in $Z$, and $w \in L_1 \cap L_2$, then $w \in Z$. For simplicity, we will refer to such subspaces as \textit{lines}, and write $\cl(v_1,v_2)$ for the subspace spanned by vectors $v_1,v_2 \in \bF^n$.
	
	For $(i,j) \in \{(1,2),(2,3),(3,1)\}$, let $L_{ij} = \cl(e_i,e_j)$, and $F_{ij} = \{\omega \in \bF: e_i + \omega e_j \in Z\}.$ 
	Since all lines in $\PG(n-1,q)$ have $q+1$ points, and the elements of $F_{ij}$ are in one-to-one correspondence with the points other than $u_j$ on the line $L_{ij}$, we have $|F_{ij}| = q$, and since the columns of $B$ are columns of $A$, the sets $F_{ij}$ contain $0$ and $-1$. 
	\begin{claim}\label{vsclaim1}
		$F_{12} = F_{23} = F_{31}$, and this set is closed under $\bF$-inverses.
	\end{claim}
	\begin{proof}[Proof of claim:]
		Let $\alpha \in F_{12}$. The lines $\cl(e_1 + \alpha e_2, e_3-e_1)$ and $L_{23}$ meet at a point parallel to $e_2 + \alpha^{-1}e_3$, so $\alpha^{-1} \in F_{23}$. The lines $\cl(e_2 + \alpha^{-1}e_3, e_1-e_2)$ and $L_{31}$ meet at a point parallel to $e_3 + \alpha e_1$, so $\alpha \in F_{31}$. Finally, the lines $L_{12}$ and $\cl(e_3 + \alpha e_1, e_2-e_3)$ meet at a point parallel to $e_1 + \alpha^{-1}e_2$, so $\alpha^{-1} \in F_{12}$. Now, $F_{12} = \{\alpha^{-1}: \alpha \in F_{12}\}$, and the inclusions established give $F_{12} \supseteq F_{23} \supseteq F_{31} \supseteq F_{12}$, giving the claim.
	\end{proof}
	Let $F = F_{12} = F_{23} = F_{31}$. This second claim, together with the first claim and the fact that $F$ contains $-1$ and $0$, implies that $F$ is a subfield of $\bF$. 
	\begin{claim}
		$F$ is closed under subtraction and multiplication in $\bF$. 
	\end{claim}
	\begin{proof}[Proof of claim:]
		Let $\alpha, \beta \in F$. To see closure under multiplication, observe that $\alpha \in F_{12}$, $\beta \in F_{23}$, so $e_1 + \alpha e_2$ and $e_2 + \beta e_3$ are both in $Z$. The lines $\cl(e_1,e_2 + \beta e_3)$ and $\cl(e_1 + \alpha e_2, e_3)$ meet at a point parallel to $e_1 + \alpha e_2 + \alpha\beta e_3$, so this vector is in $Z$. The line $\cl(e_1 + \alpha e_2 + \alpha \beta e_3,e_2)$ meets $L_{31}$ at $e_3 + (\alpha\beta)^{-1}e_1$, so $(\alpha\beta)^{-1} \in F_{31}$, giving $\alpha\beta \in F$ by the first claim. 
				
		We have $\alpha, \beta \in F_{12}$, so $e_1 + \alpha e_2$ and $e_1 + \beta e_2$ are both in $Z$. The lines $\cl(e_1 + \alpha e_2, e_2-e_3)$ and $\cl(e_1 + \beta e_2, e_3-e_1)$ meet at a point parallel to $e_1 + \beta e_2 + (\alpha-\beta)e_3$, and $\cl(e_2, e_1 + \beta e_2 + (\alpha - \beta)e_3)$ meets $L_{31}$ at a point parallel to $(\beta - \alpha)^{-1}e_1 + e_3$, so $(\alpha-\beta)^{-1} \in F_{31}$, giving $\alpha - \beta \in F$ by the first claim. 
	\end{proof}
	By these two claims, $F$ is a subfield of $\bF$. We know $|F| = q$, so Theorem~\ref{subfieldunique} implies that $F = \GF(q)$. We have therefore shown that for all $\omega \in \GF(q)$ and distinct elements $x_1, x_2 \in E(M)$, the vector $A_{x_1} + \omega A_{x_2}$ is parallel to a column of $A$. We may assume that all columns of $I_n$ are columns of $A$, so by repeated applications of this fact, it follows that all nonzero vectors in $\bF^n$ are parallel to a column of $A$, which implies the theorem.
	\end{proof}

This theorem has an important immediate corollary:

\begin{corollary}\label{framenice}
	If $q$ is a prime power, $M$ is a GF($q^2$)-representable matroid, and $R$ is a $\PG(r(M)-1,q)$-restriction of $M$, then $R$ is $\GF(q)$-represented in $M$.
\end{corollary}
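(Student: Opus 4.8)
The plan is to deduce the corollary directly from Theorem~\ref{pgunique}. First I would set $n = r(M)$, fix an arbitrary $\GF(q^2)$-representation $A$ of $M$, and examine the submatrix $A[E(R)]$: since $R$ is a restriction of $M$ this is a $\GF(q^2)$-representation of $R \cong \PG(n-1,q)$, and because $\PG(n-1,q)$ is spanning in $M$ it has $n$ rows and rank $n$. So $A[E(R)]$ is exactly the kind of matrix to which Theorem~\ref{pgunique} applies.

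Assuming $n \ge 3$, I would apply Theorem~\ref{pgunique} with $\bF = \GF(q^2)$, a finite extension of $\GF(q)$, to conclude that $A[E(R)]$ is projectively equivalent to a matrix with all entries in $\GF(q)$ --- say to $g\,A[E(R)]\,D_0$, where $g \in \mathrm{GL}_n(\GF(q^2))$ is a product of elementary matrices and $D_0$ is an invertible diagonal matrix over $\GF(q^2)$ whose columns are indexed by $E(R)$. The next step is to lift this to a projective transformation of all of $A$: extend $D_0$ to the diagonal matrix $D$ indexed by $E(M)$ that agrees with $D_0$ on the columns in $E(R)$ and is the identity on the remaining columns, and put $A' = g\,A\,D$. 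Then $A'$ is projectively equivalent to $A$, so $M(A') = M(A) = M$, while $A'[E(R)] = g\,A[E(R)]\,D_0$ has all of its entries in $\GF(q)$; hence $R$ is $\GF(q)$-represented in $M$, as required.

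For the degenerate cases $n \le 2$ I would argue directly. The cases $n \in \{0,1\}$ are trivial. For $n = 2$, note that $R \cong \PG(1,q) = U_{2,q+1}$ meets $q+1$ pairwise nonparallel elements of $M$, hence $q+1$ distinct points of $\si(M)$; since a $\GF(q^2)$-representable rank-$2$ matroid has at most $q^2+1$ points and $\PG(1,q^2)$ has exactly $q^2-q$ points lying outside $\PG(1,q)$, I can choose a $\GF(q^2)$-representation of $\si(M)$ in which the points meeting $E(R)$ fill out all of $\PG(1,q)$ while the remaining points occupy distinct points outside it, and then restore parallel elements and loops to obtain a representation of $M$ of the desired form.

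I expect the only point that needs care to be the lifting step in the $n \ge 3$ case: one must observe that elementary row operations act uniformly across every column of $A$ while column scalings may be performed column by column, so that projective equivalence of the submatrix $A[E(R)]$ to a matrix over $\GF(q)$ genuinely upgrades to a projective transformation of $A$ itself. Everything else is either an immediate citation of Theorem~\ref{pgunique} or a small-rank check; indeed, if the corollary were only needed for $r(M) \ge 3$ --- which is all that the later sections use --- the proof would reduce to a single application of Theorem~\ref{pgunique} together with this remark.
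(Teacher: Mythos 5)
Your argument is correct, and since the paper presents this corollary as ``immediate'' from Theorem~\ref{pgunique} without a written proof, your argument is exactly the intended deduction made explicit. The essential content is the lifting observation: because projective equivalence decomposes as left-multiplication by an invertible matrix (row operations, which act on every column of $A$) and right-multiplication by a diagonal matrix (column scalings, which act independently on each column), a projective transformation taking $A[E(R)]$ to a matrix over $\GF(q)$ extends to a projective transformation of $A$ by setting the diagonal entries on $E(M)\setminus E(R)$ to $1$. Your treatment of the low-rank cases is also correct --- for $r(M)=2$ the simplification of $M$ is uniform, so the representation is flexible enough to place $E(R)$ on a Baer subline --- though you are right that these cases never arise in the paper's later uses.
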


\begin{lemma}\label{pgframe}
	Let $q$ be a prime power, $M$ be a GF$(q^2)$-representable matroid, and let $R$ be a $\PG(r(M)-1,q)$-restriction of $M$. If $e \in E(M)$ is a nonloop, and $e$ is not parallel to a point of $R$, then there is a unique line $L$ of $R$ so that $e \in \cl_M(L)$. 
\end{lemma}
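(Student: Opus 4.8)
The plan is to fix an explicit matrix representation and reduce the statement to elementary linear algebra over $\GF(q)$ and $\GF(q^2)$. Write $n = r(M)$; we may assume $n \ge 3$, since the cases $n \le 2$ are immediate (the only line of $R$ is $E(R)$, which spans $M$). By Corollary~\ref{framenice}, $R$ is $\GF(q)$-represented in $M$, so there is a $\GF(q^2)$-representation $A$ of $M$ in which every entry of $A[E(R)]$ lies in $\GF(q)$. Since $R \cong \PG(n-1,q)$ is simple with $\frac{q^n-1}{q-1}$ points, the columns of $A[E(R)]$ are, up to scaling, exactly the one-dimensional $\GF(q)$-subspaces of $\GF(q)^n$, and the lines of $R$ correspond bijectively to the two-dimensional $\GF(q)$-subspaces of $\GF(q)^n$; for a line $L$ of $R$ write $V_L$ for the corresponding subspace. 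In this language, $e \in \cl_M(L)$ if and only if the column $A_e$ lies in the $\GF(q^2)$-span of $V_L$, and the hypothesis that $e$ is a nonloop not parallel to a point of $R$ says exactly that $A_e$ is nonzero and is not a $\GF(q^2)$-scalar multiple of any vector in $\GF(q)^n$.

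Next I would fix $\theta \in \GF(q^2) \setminus \GF(q)$, so that $\{1,\theta\}$ is a $\GF(q)$-basis of $\GF(q^2)$ and hence $\GF(q^2)^n = \GF(q)^n \oplus \theta\,\GF(q)^n$ as $\GF(q)$-spaces, and write $A_e = u_1 + \theta u_2$ with $u_1, u_2 \in \GF(q)^n$ uniquely determined. The two key points are: (i) for any $\GF(q)$-subspace $V \le \GF(q)^n$ the $\GF(q^2)$-span of $V$ equals $V \oplus \theta V$, so $A_e$ lies in the $\GF(q^2)$-span of $V$ if and only if $u_1 \in V$ and $u_2 \in V$; and (ii) $u_1$ and $u_2$ are linearly independent over $\GF(q)$, since otherwise $u_1 = a v$ and $u_2 = b v$ for some $v \in \GF(q)^n$ and $a,b \in \GF(q)$ not both zero, making $A_e = (a + b\theta)v$ a $\GF(q^2)$-scalar multiple of $v \in \GF(q)^n$, contrary to hypothesis.

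With these in hand the lemma follows quickly: let $W = \langle u_1, u_2 \rangle_{\GF(q)}$, a two-dimensional subspace of $\GF(q)^n$ by (ii), and let $L$ be the line of $R$ with $V_L = W$. By (i), $A_e$ lies in the $\GF(q^2)$-span of $W = V_L$, so $e \in \cl_M(L)$; and if $L'$ is any line of $R$ with $e \in \cl_M(L')$, then (i) gives $u_1, u_2 \in V_{L'}$, hence $W \subseteq V_{L'}$, and comparing dimensions forces $V_{L'} = W = V_L$, so $L' = L$. This proves both existence and uniqueness.

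The argument is short; the only steps needing care are the two translations in the first paragraph — from matroid closure to $\GF(q^2)$-linear span, and from lines of $R$ to two-dimensional $\GF(q)$-subspaces — both routine once the representation from Corollary~\ref{framenice} is in place, together with the identity in (i), which uses only that $\{1,\theta\}$ is a $\GF(q)$-basis of $\GF(q^2)$ (so that $\GF(q)^n$ and $\theta\,\GF(q)^n$ intersect trivially and the decomposition $A_e = u_1 + \theta u_2$ is unique).
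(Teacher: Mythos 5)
Your proof is correct and follows essentially the same approach as the paper: both fix a scalar $\theta$ (or $\omega$) in $\GF(q^2)\setminus\GF(q)$, write $A_e = u_1 + \theta u_2$ with $u_1,u_2\in\GF(q)^n$, and take $L$ to be the line of $R$ spanned by the corresponding points. Your treatment is slightly more explicit than the paper's in two places — verifying that $u_1,u_2$ are $\GF(q)$-independent (so they really do span a line), and proving uniqueness via the direct-sum decomposition $\GF(q^2)^n = \GF(q)^n\oplus\theta\,\GF(q)^n$ rather than by invoking modularity of the lines of $R$ — but these are cosmetic refinements of the same argument.
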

\begin{proof}
	By Corollary~\ref{framenice}, there is a $\GF(q^2)$-representation $A$ of $M$ so that $A[E(R)]$ has entries only in $\GF(q)$. Let $e \in E(M \del R)$ be a nonloop, and $\omega \in \GF(q^2)-\GF(q)$. Since $\{1,\omega\}$ is a basis for $\GF(q^2)$ over $\GF(q)$, there are vectors $v,v' \in \GF(q)^n$ so that $A_e = v + \omega v'$. Since $R \cong \PG(r(M)-1,q)$, the vectors $u$ and $v$ are parallel to columns $A_f$ and $A_{f'}$ of $A[E(R)]$, so $e \in \cl_M(\{f,f'\})$, which is a line of $R$. By modularity of the lines of $R$, and the fact that $e$ is not a point of $R$, this line is unique.
	
\end{proof}

\section{The Extremal Matroids}\label{epgsection}

In this section, we define and investigate the a class of matroids which we will later show are the densest matroids in $\cP_{q,k}$. 

\begin{definition}\label{defepg}
	Let $q$ be a prime power, and $k$ and $n$ be integers with $0 \le k \le n$. Define a set $Z(n-1,q,k) \subseteq (\GF(q^2))^{n}$ by \[Z(n-1,q,k) = \left\{ (x \ y): x \in (\GF(q^2))^k, y \in (\GF(q))^{n-k}\right\}.\]
	 Let $A$ be a matrix whose set of columns is $Z(n-1,q,k)$. We denote by $\PG^{(k)}(n-1,q)$ any matroid isomorphic to $\si(M(A))$.
\end{definition}

The matroid $\PG^{(k)}(n-1,q)$ is a rank-$n$ projective geometry over $\GF(q)$, extended by some points from a projective geometry over $\GF(q^2)$. It is clear that $\PG^{(k)}(n-1,q)$ has rank $n$. For any integers $0 \le k \le n \le n'$, the matroid $\PG^{(k)}(n'-1,q)$ has a $\PG^{(k)}(n-1,q)$-restriction. 

The number of points in $\PG^{(k)}(n-1,q)$ is simple to determine. We will use this lemma freely:

\begin{lemma}\label{sizeepg}
	If $q$ is a prime power, and $k \ge 0$ and $n \ge k$ are integers, then
	\[|\PG^{(k)}(n-1,q)| = \frac{q^{n+k}-1}{q-1} - q\left(\frac{q^{2k}-1}{q^2-1}\right).\]
\end{lemma}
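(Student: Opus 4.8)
The plan is to count the points of $\PG^{(k)}(n-1,q) = \si(M(A))$ directly, where $A$ is the matrix whose column set is $Z := Z(n-1,q,k)$. A point of $\si(M(A))$ is a parallel class of nonzero columns, equivalently a one-dimensional subspace $L$ of $(\GF(q^2))^n$ with $L \cap Z \neq \{0\}$; so the task is to enumerate such lines $L$. Writing each vector of $Z$ as $(x\ y)$ with $x \in (\GF(q^2))^k$ and $y \in (\GF(q))^{n-k}$, I would split the relevant lines into two types according to whether the $y$-part vanishes on the line.

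First I would handle the lines $L$ whose vectors have $y$-part zero: these are exactly the lines of the rank-$k$ geometry $\PG(k-1,q^2)$ carried by the first $k$ coordinates, so there are $\frac{q^{2k}-1}{q^2-1}$ of them, and each lies entirely in $Z$.

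The key observation, which I expect to be the only step needing a little care, concerns a nonzero $v = (x\ y) \in Z$ with $y \neq 0$: for $\lambda \in \GF(q^2)^*$ the vector $\lambda v = (\lambda x\ \lambda y)$ lies in $Z$ if and only if $\lambda y \in (\GF(q))^{n-k}$, and since $y$ has a coordinate in $\GF(q)^*$ this forces $\lambda \in \GF(q)^*$. Hence every line of this second type meets $Z$ in precisely $q-1$ vectors. As $Z$ has $q^{2k}q^{n-k} = q^{n+k}$ vectors in total and $q^{2k}$ of them have $y = 0$, the $q^{n+k}-q^{2k}$ remaining vectors partition into lines of size $q-1$, giving $\frac{q^{n+k}-q^{2k}}{q-1}$ lines of the second type.

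Finally I would add the two counts and simplify, using $\frac{1}{q-1} - \frac{1}{q^2-1} = \frac{q}{q^2-1}$:
\[
\frac{q^{n+k} - q^{2k}}{q-1} + \frac{q^{2k}-1}{q^2-1}
= \frac{q^{n+k}-1}{q-1} - (q^{2k}-1)\left(\frac{1}{q-1} - \frac{1}{q^2-1}\right)
= \frac{q^{n+k}-1}{q-1} - q\left(\frac{q^{2k}-1}{q^2-1}\right),
\]
which is the claimed value. No genuine obstacle is anticipated — the argument is essentially bookkeeping — and the one substantive point is that scaling a vector with a nonzero $\GF(q)$-part out of $(\GF(q))^{n-k}$ forces the scalar to leave $\GF(q)$, so the ``long'' projective lines over $\GF(q^2)$ occur only among the first $k$ coordinates.
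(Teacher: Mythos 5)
Your proof is correct and uses essentially the same approach as the paper: both partition the nonzero vectors of $Z$ according to whether the $\GF(q)$-part $y$ vanishes, observe that parallel classes have size $q^2-1$ in the former case and $q-1$ in the latter (because a nonzero coordinate in $\GF(q)$ pins the scalar to $\GF(q)$), and then add the two counts. The only difference is cosmetic — you count one-dimensional subspaces directly while the paper counts vectors and divides.
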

\begin{proof}
	Let $Z = Z(n-1,q,k)$ be the set, and $A$ be the matrix in definition~\ref{defepg}. Let 
	\[ Z_1 = \{(x \ y) \in Z: x \in (\GF(q^2))^k, y \in (\GF(q))^{n-k} - \{0\}\},\] and \[Z_2 = \{(x \ 0) \in Z: x \in (\GF(q^2))^k - \{0\}\}. \]
	So $Z = Z_1 \cup Z_2 \cup \{0\}$. Each $z \in Z_1$ is parallel to exactly $q-1$ elements of $Z$: those of the form $\alpha z: \alpha \in \GF(q) - \{0\}$. Each $z \in Z_2$ is parallel to exactly $q^2-1$ elements of $Z$: those of the form $\beta z: \beta \in \GF(q^2)-\{0\}$. We have 
	\begin{align*}
		|\PG^{(k)}(n-1,q)| &= \elem(M(A)) \\
					   &= \frac{|Z_1|}{q-1} + \frac{|Z_2|}{q^2-1}\\
					   &= \frac{(q^2)^k(q^{n-k}-1)}{q-1} + \frac{q^{2k}-1}{q^2-1}, 
	\end{align*}
	and the result follows by a calculation.
\end{proof}

\begin{corollary}\label{squarefieldpg}
	If $k \ge 0$ and $n > k$ are integers, then $\PG^{(k)}(n-1,q)$ has a $\PG(k,q^2)$-restriction.
\end{corollary}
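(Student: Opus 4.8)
The plan is to cut the problem down to the extreme case $k=n-1$ and then recognise the resulting matroid explicitly. Since $n>k$ we have $k+1\le n$, so the remark immediately following Definition~\ref{defepg} (with $n'=n$ and rank parameter $k+1$) already records that $\PG^{(k)}(n-1,q)$ has a $\PG^{(k)}(k,q)$-restriction. Hence the entire content of the corollary reduces to the identity $\PG^{(k)}(k,q)\cong\PG(k,q^2)$, and the corollary follows at once from it.

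To prove that identity I would argue as follows. Write $\PG^{(k)}(k,q)=\si(M(A))$, where $A$ has column set $Z(k,q,k)=\{(x\ t):x\in(\GF(q^2))^k,\ t\in\GF(q)\}\subseteq(\GF(q^2))^{k+1}$. This is a simple $\GF(q^2)$-representable matroid of rank $k+1$ (the columns include $e_1,\dots,e_{k+1}$), and by Lemma~\ref{sizeepg} together with a short calculation it has $\tfrac{q^{2k+1}-1}{q-1}-q\tfrac{q^{2k}-1}{q^2-1}=\tfrac{q^{2(k+1)}-1}{q^2-1}$ points. Since every simple rank-$(k+1)$ $\GF(q^2)$-representable matroid is isomorphic to a restriction of $\PG(k,q^2)$, and $\PG(k,q^2)$ itself has exactly $\tfrac{q^{2(k+1)}-1}{q^2-1}$ points, the point count forces $\PG^{(k)}(k,q)\cong\PG(k,q^2)$. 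Equivalently, and more concretely, one checks directly that every one-dimensional subspace of $(\GF(q^2))^{k+1}$ is spanned by a column of $A$: a nonzero vector $v$ with $v_{k+1}\neq0$ is parallel to $v_{k+1}^{-1}v\in Z(k,q,k)$, while one with $v_{k+1}=0$ already lies in $Z(k,q,k)$; so $M(A)$ presents all of $\PG(k,q^2)$.

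This is essentially bookkeeping and I do not anticipate a genuine obstacle. The only step worth stating carefully is the reduction to $\PG^{(k)}(k,q)$ via the remark after Definition~\ref{defepg} (which implicitly uses that simplification and restriction commute in the appropriate sense); once that is invoked, all the real content is the arithmetic identification of $\PG^{(k)}(k,q)$ with $\PG(k,q^2)$ above. If one wishes to avoid even the Lemma~\ref{sizeepg} computation, the one-line direct verification that the columns of $A$ meet every point of $\PG(k,q^2)$ suffices.
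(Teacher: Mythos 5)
Your primary argument — reduce to $\PG^{(k)}(k,q)$ via the remark after Definition~\ref{defepg}, apply Lemma~\ref{sizeepg} to get $\tfrac{q^{2(k+1)}-1}{q^2-1}$ points, and conclude from $\GF(q^2)$-representability in rank $k+1$ — is exactly the paper's proof, and your arithmetic (exponents $2k+1$ and $2k$) is what the paper means, though its typeset display contains a typo ($2(k+1)$ in both spots). Your alternative direct verification that $Z(k,q,k)$ meets every line through the origin of $(\GF(q^2))^{k+1}$ is also correct and in fact cleaner, bypassing the point count entirely, but it is a small variation rather than a genuinely different route.
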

\begin{proof}
	$\PG^{(k)}(n,q)$ has an $\PG^{(k)}(k,q)$-restriction; it thus suffices to show that $\PG^{(k)}(k,q) \cong \PG(k,q^2)$. 
	By Lemma~\ref{sizeepg}, 
		\begin{align*}
			|\PG^{(k)}(k,q)| &= \frac{q^{2(k+1)}-1}{q-1} - q\left(\frac{q^{2(k+1)}-1}{q^2-1}\right)\\
						 &= \frac{(q^2)^{k+1}-1}{q^2-1}\\
						 &= |\PG(k,q^2)|,
		\end{align*}
		and the result follows from the fact that $\PG^{(k)}(k,q)$ is a rank-$(k+1)$, $\GF(q^2)$-representable matroid. 
\end{proof}

This is the largest projective geometry over $\GF(q^2)$ that we can find as a minor of $\PG^{(k)}(n-1,q)$: 

\begin{lemma}\label{nobigpg}
	Let $q$ be a prime power, and $0 \le k \le n$ be integers. The matroid $\PG^{(k)}(n-1,q)$ has no $\PG(k+1,q^2)$-minor.
\end{lemma}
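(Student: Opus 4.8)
The plan is to bound the number of points in a simple rank-$r$ minor of $\PG^{(k)}(n-1,q)$, using the structure provided by Lemma~\ref{pgframe}, and show it is strictly smaller than $|\PG(k+1,q^2)|$, which would preclude a $\PG(k+1,q^2)$-minor by rank/density considerations. First I would reduce to restrictions rather than minors: since $\PG^{(k)}(n-1,q)$ is obtained from a $\GF(q)$-projective geometry by adding points, any minor is captured by contracting a subset $C$ and simplifying; but contracting inside a $\GF(q^2)$-representable matroid keeps it $\GF(q^2)$-representable, and I want instead to argue directly about point counts. The cleaner route: suppose for contradiction that $N$ is a $\PG(k+1,q^2)$-minor of $M := \PG^{(k)}(n-1,q)$. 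Write $M = M'/C$ where $M'$ is a restriction of $M$ (so still $\GF(q^2)$-representable) and $C$ is independent; then $r(M') \le r(N) + |C|$ and $N$ is a restriction of $\si(M'/C)$. The key point to extract is that $M$, and hence every restriction $M'$, has the property that \emph{every line contains at most $q^2+1$ points but the "non-$\GF(q)$" points lie on lines spanned by the $\GF(q)$-geometry in a controlled way}.

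The main structural input is Lemma~\ref{pgframe}: in $M$, the $\GF(q)$-represented restriction $R \cong \PG(n-1,q)$ is a frame, and every point of $M$ not on $R$ lies on the closure of a unique line of $R$. Concretely, from Definition~\ref{defepg}, a point $(x\ y)$ with $y \neq 0$ is, after scaling, $\GF(q)$-valued, so it is a point of $R$; the genuinely new points are those with $y = 0$, i.e. the points of the $\PG(k-1,q^2)$ sitting in the first $k$ coordinates (minus the sub-$\PG(k-1,q)$ already in $R$). So the relevant picture is: $M$ is $\PG(n-1,q)$ together with the points of a $\PG(k-1,q^2)$ glued along a common $\PG(k-1,q)$-flat $T$ of rank $k$. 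I would make this decomposition precise and note that the "extra" points all lie in the rank-$k$ flat $F := \cl_M(T)$ of $M$, which is itself a $\PG(k-1,q^2)$ (by Corollary~\ref{squarefieldpg}, $M|F \cong \PG(k-1,q^2)$, shifting indices: the rank-$k$ flat of the square-field part).

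Now for the contradiction. A $\PG(k+1,q^2)$-minor $N$ has rank $k+2$ and every rank-$k$ flat of $N$ is a $\PG(k-1,q^2)$. If $N = \si(M'/C)$ with $M'$ a restriction of $M$, then contracting $C$ from $M'$ and restricting, one finds $k+2$ independent points whose span in $M'$ has rank at most $k+2+|C|$; within that span, $M'$ still consists of a $\GF(q)$-geometry plus square-field points confined to a single rank-$k$ flat (restrictions of $M$ inherit the decomposition, with the square-field part of rank $\le k$). The obstruction is that $\PG(k+1,q^2)$ needs a sub-$\PG(k,q^2)$, i.e. a rank-$(k+1)$ flat all of whose lines have $q^2+1$ points; but in any minor of $M$, the lines with $q^2+1$ points all lie inside the rank-$k$ flat $F$ (or its images under contraction), so one cannot fit a rank-$(k+1)$ all-square-field flat. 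I would formalise this by: (i) showing that in $M$, if a line $\ell$ has more than $q+1$ points then $\ell \subseteq F$; (ii) checking this "long lines confined to a rank-$k$ flat" property is preserved under restriction and under contraction-then-simplification (contraction can only merge flats and cannot create long lines outside the image of $F$, since $M/e$ for $e \in F$ has its long lines in $F/e$, and for $e \notin F$, $\si(M/e)$ is again of the same form with square-field part of rank $\le k$ — this needs the uniqueness in Lemma~\ref{pgframe}); (iii) concluding that no minor of $M$ contains $\PG(k,q^2)$ as a restriction of rank exactly $k+1$, hence no $\PG(k+1,q^2)$-minor.

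The step I expect to be the main obstacle is (ii): controlling what happens to the "long lines live in a rank-$k$ flat" invariant under contraction. Contracting an element $e \notin F$ could a priori create new long lines by projecting several short lines onto each other; I need Lemma~\ref{pgframe} (or a strengthening of it describing higher-rank flats, not just lines) to show that the square-field part of $\si(M/e)$ still has rank at most $k$ and still sits inside a single flat. An alternative, possibly cleaner, route to (ii) is to work representationally throughout: fix the $\GF(q^2)$-matrix $A$ with columns $Z(n-1,q,k)$, observe that row operations preserve the block structure "last $n-k$ coordinates can be taken over $\GF(q)$" for a suitably chosen complement, and track the rank of the "$\GF(q^2)$-essential" part of the column space directly under contraction. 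Either way, once the invariant is nailed down, the final numerical contradiction is immediate since $\PG(k+1,q^2)$ visibly violates it.
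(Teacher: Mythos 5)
Your proposal rests on a structural picture of $\PG^{(k)}(n-1,q)$ that is incorrect, and the resulting invariant is false. You claim that a column $(x\ y)$ with $y \neq 0$ can always be scaled into $\GF(q)^n$, so that the only ``new'' points are those with $y = 0$ and they all live in a rank-$k$ flat $F$. This is not so: take $(x\ y) = (\omega\ 1)$ with $\omega \in \GF(q^2)\setminus\GF(q)$. Any $\lambda \in \GF(q^2)^*$ with $\lambda\cdot 1 \in \GF(q)$ has $\lambda \in \GF(q)$, and then $\lambda\omega \notin \GF(q)$; so this is a point of $\PG^{(k)}(n-1,q)$ that is not a point of $R$ and does not have $y=0$. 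Consequently your claim (i) --- that every line with more than $q+1$ points lies inside a rank-$k$ flat --- fails: the line spanned by $e_1$ and $e_{k+1}$ in $\PG^{(k)}(n-1,q)$ already has $q^2+1$ points and leaves the pure-$\GF(q^2)$ slice. Indeed by Corollary~\ref{squarefieldpg} the matroid has a $\PG(k,q^2)$-restriction of rank $k+1$, so no rank-$k$ flat can even contain all long lines, and a direct computation shows long lines are not confined to the rank-$(k+1)$ flat either: the closure of $\cl_M(L)$ is long for every line $L$ of $R$ that meets the new points, and these lines span the whole space. With (i) false, steps (ii) and (iii) have nothing to stand on.

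The correct invariant, which is essentially the ``alternative route'' you gesture at in the last paragraph but do not carry out, is the paper's: there is a set of rank at most $k$ (the first $k$ standard basis columns) whose contraction yields a $\GF(q)$-representable matroid, and this property --- ``some rank-$\le k$ contraction is $\GF(q)$-representable'' --- passes to contraction-minors. Since $\PG(k+1,q^2)$ has rank $k+2$, contracting any rank-$\le k$ set from a matroid with a $\PG(k+1,q^2)$-restriction still leaves a $U_{2,q^2+1}$-restriction, which is not $\GF(q)$-representable. That gives the lemma in a few lines. Had you pursued the representational route, tracking ``rank of the $\GF(q^2)$-essential part is $\le k$'' under contraction, you would have arrived at the same short argument; the ``long lines in a small flat'' route cannot be repaired without replacing it by something equivalent to this.
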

\begin{proof}
	We may assume that $n > k+1$. Let $M \cong \PG^{(k)}(n-1,q)$, and let $A$ be the matrix whose columns are the vectors in $Z(n-1,q,k)$, so $M = \si(M(A))$. The first $k$ standard basis vectors of $\GF(q^2)^n$ are columns of $A$, and contracting these columns gives a $\GF(q)$-representable matroid. Therefore, for any contraction-minor $M'$ of $M$, there is a set $C \subseteq E(M')$ of rank at most $k$ such that $M' \con C$ is $\GF(q)$-representable. Any matroid with a $\PG(k+1,q^2)$-restriction does not have this property, giving the lemma. 
\end{proof}

It is straightforward to see over which fields extended projective geometries are representable:

\begin{lemma}\label{anyfield}
		Let $q$ be a prime power, and $n \ge 3$ be an integer. If $\bF$ is a field with a proper $\GF(q)$-subfield, then $\PG^{(1)}(n-1,q)$ is $\bF$-representable, and if $\bF$ has no $\GF(q^2)$-subfield, then $\PG^{(2)}(n-1,q)$ is not $\bF$-representable.
	\end{lemma}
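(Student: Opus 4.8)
The plan is to exhibit explicit representing matrices, exploiting the very concrete description of $Z(n-1,q,k)$ in Definition~\ref{defepg}. Fix a field $\bF$ with a proper $\GF(q)$-subfield; by the Subfield Criterion (Theorem~\ref{subfieldunique}) applied within $\GF(q)$'s prime subfield, or rather simply because $\bF$ properly contains $\GF(q)$, pick $\omega \in \bF \setminus \GF(q)$. Since $\PG^{(1)}(n-1,q) = \si(M(A))$ where the columns of $A$ are the vectors $(x\ y)$ with $x \in \GF(q^2)$, $y \in \GF(q)^{n-1}$, the natural candidate representation over $\bF$ is the matrix $A'$ whose columns are all vectors $(a + b\omega\ y)$ with $a, b \in \GF(q)$ and $y \in \GF(q)^{n-1}$ — in other words, replace the first coordinate, which ranges over the two-dimensional $\GF(q)$-space $\GF(q^2)$, by the two-dimensional $\GF(q)$-space $\GF(q) + \GF(q)\omega \subseteq \bF$. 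First I would check that the $\GF(q)$-linear bijection $\GF(q^2) \to \GF(q) + \GF(q)\omega$ extends coordinatewise to a bijection $Z(n-1,q,1) \to$ (columns of $A'$) that preserves $\GF(q)$-linear dependence; since $M(A)$ and $M(A')$ both have rank $n$ and their ground sets are in dependence-preserving bijection, $\si(M(A')) \cong \si(M(A)) = \PG^{(1)}(n-1,q)$, and $M(A')$ is by construction $\bF$-representable. The key point making this work is that the only arithmetic used in $Z(n-1,q,1)$ beyond $\GF(q)$-scaling happens within the single "fat" coordinate, and $\GF(q)$-linear structure there is all that matters for matroid dependence.

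For the second assertion I would argue by contradiction: suppose $\bF$ has no $\GF(q^2)$-subfield but $\PG^{(2)}(n-1,q)$ is $\bF$-representable. By Corollary~\ref{squarefieldpg}, since $n \ge 3 > 2$, the matroid $\PG^{(2)}(n-1,q)$ has a $\PG(2,q^2)$-restriction, hence would have a $\GF(q^2)$-... no — rather, it has a $\PG(2,q^2)$-restriction, and a restriction of an $\bF$-representable matroid is $\bF$-representable, so $\PG(2,q^2)$ would be $\bF$-representable. But $\PG(2,q^2)$ contains a $\PG(1,q^2) = U_{2,q^2+1}$-restriction (a line with $q^2+1$ points), and more importantly, by the Fundamental Theorem of Projective Geometry combined with Theorem~\ref{pgunique}, any representation of $\PG(2,q^2)$ (rank $\ge 3$) over a finite extension field of $\GF(q^2)$ is projectively equivalent to one over $\GF(q^2)$; here I would invoke the standard fact that a rank-$\ge 3$ projective geometry $\PG(m,s)$ is representable over a field $\bF$ only if $\bF$ has a subfield of order $s$ — this follows from coordinatizing the Desarguesian plane and recovering the ternary ring as $\GF(s) \subseteq \bF$, exactly as in the proof technique of Theorem~\ref{pgunique}. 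This contradicts the hypothesis that $\bF$ has no $\GF(q^2)$-subfield.

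The main obstacle is the second half: one must be careful that "$\PG(2,q^2)$ is $\bF$-representable $\Rightarrow \bF \supseteq \GF(q^2)$" is genuinely invoked at the right level of generality — $\bF$ need not be finite here, so Theorem~\ref{pgunique} as stated (finite extension) does not literally apply. I would handle this by noting that a representation of $\PG(2,q^2)$ over arbitrary $\bF$ uses only finitely many elements of $\bF$, which lie in a finitely generated, hence finite or transcendental, subring; coordinatizing the plane forces the relevant subset to be closed under the field operations and to have exactly $q^2$ elements (being in bijection with the points of a line minus one point), yielding a genuine subfield of order $q^2$ inside $\bF$. Alternatively, and more cleanly, I would reduce to the finite case: $\PG^{(2)}(n-1,q)$ is $\GF(q^2)$-representable, so if it is also $\bF$-representable then it is representable over any common extension, but this does not immediately give what we want; the safest route is the direct coordinatization argument sketched above, which mirrors the claims inside the proof of Theorem~\ref{pgunique} and needs no finiteness of $\bF$.
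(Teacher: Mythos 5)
Your approach is the same as the paper's: for the first half, build an explicit $\bF$-matrix by replacing the "fat" $\GF(q^2)$-coordinate with $\GF(q) + \GF(q)\omega$ for a chosen $\omega \in \bF \setminus \GF(q)$ and argue the resulting matroid is $\PG^{(1)}(n-1,q)$; for the second half, use Corollary~\ref{squarefieldpg} to find a $\PG(2,q^2)$-restriction of $\PG^{(2)}(n-1,q)$ and invoke the fact that a Desarguesian plane of order $q^2$ is representable only over fields with a $\GF(q^2)$-subfield (the paper simply asserts this; you give a reasonable coordinatization sketch that handles infinite $\bF$, which is a fair gloss on what the paper leaves to the reader).

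One caution on your first half: the criterion "preserves $\GF(q)$-linear dependence" is not quite the right invariant, since $M(A)$ is defined by $\GF(q^2)$-linear dependence and $M(A')$ by $\bF$-linear dependence, and a $\GF(q)$-linear bijection need not preserve either. What actually needs checking is that the $\bF$-rank of a column set is determined by purely $\GF(q)$-linear data: decomposing each column as a $\GF(q)$-vector in $\GF(q)^{n+1}$ (splitting the first coordinate over the basis $\{1,\omega\}$), the $\bF$-rank equals the $\GF(q)$-rank of the decomposed vectors, minus one exactly when their $\GF(q)$-span contains both $e_1$ and $e_2$ — a description visibly independent of $\bF$ and $\omega$. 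The paper also omits this verification (it says "straightforward to check"), so your argument is at the same level of rigor, but the specific justification you offer would not survive scrutiny as stated.
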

	\begin{proof}
		Let $\omega \in \bF - \GF(q)$. Let $A_{\bF,\omega}$ be a matrix, containing as columns all vectors in $\bF^n$ whose first entry lies in the set $\{\alpha\omega + \beta: \alpha, \beta \in \GF(q)\}$, and whose other entries lie in $\GF(q)$. It is straightforward to check that $M(A_{\bF,\omega})$ does not depend on $\bF$ or $\omega$. We may therefore assume that $\bF = \GF(q^2)$. The set of columns of $A_{\GF(q^2),\omega}$ is the set $Z(n-1,q,1)$ from Definition~\ref{defepg}, giving the first part of the lemma.
		
		Lemma~\ref{squarefieldpg} implies that the matroid $\PG^{(2)}(n-1,q)$ has a $\PG(2,q^2)$-restriction. This matroid admits no representation over a field without a $\GF(q^2)$-subfield. Therefore, if $\bF$ has no such subfield, $\PG^{(2)}(n-1,q)$ is not $\bF$-representable.
	\end{proof}



\section{Finding Extremal Matroids}

We give in this section a means to construct the extremal matroids of the previous section.

If $\cL$ is a set of lines in a matroid $M$, then $\cL$ is a \textit{matching} in $M$ if $r_M\left(\bigcup_{L \in \cL} L\right) = 2|L|$, or equivalently if the lines in $\cL$ are mutually skew in $M$. We define a new property in terms of a matching in a spanning $\PG(n,q)$-restriction. 
	
	\begin{definition}
		Let $q$ be a prime power, $M$ be a $\GF(q^2)$-representable matroid, and $R$ be a $\PG(r(M)-1,q)$-restriction of $M$. By Lemma~\ref{pgframe}, each nonloop of $e$ of $M$ is either parallel to a point of $R$, or there is a unique line $L_e$ of $R$ such that $e \in \cl_M(L_e)$. If $X \subseteq E(M)$ is an independent set of $M$ containing no point parallel to a point of $R$, and $\{L_e: e \in X\}$ is an $|X|$-matching in $R$, then we say that $X$ is \textit{$R$-unstable}. 
	\end{definition}
	
\begin{lemma}\label{contracttoepg}
	Let $q$ be a prime power, and let $k \ge 0$, $n \ge k$, and $n' \ge n+k$ be integers. If a rank-$n'$, GF$(q^2)$-representable matroid $M$ has a $\PG(n'-1,q)$-restriction $R$, and an $R$-unstable set of size $k$, then $M$ has a $\PG^{(k)}(n-1,q)$-minor. 
\end{lemma}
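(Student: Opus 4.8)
The plan is to contract the $R$-unstable set $X$ itself. Intuitively, each element of $X$ records a ``$\GF(q^2)$-direction'' transverse to $R$, and contracting all of $X$ converts $k$ of the coordinate directions of the spanning $\PG(n'-1,q)$-restriction from $\GF(q)$-valued to $\GF(q^2)$-valued --- precisely the way $\PG^{(k)}(\cdot,q)$ is built from $\PG(\cdot,q)$.

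First I would fix a convenient representation. By Corollary~\ref{framenice} there is a $\GF(q^2)$-representation $A$ of $M$ with $A[E(R)]$ over $\GF(q)$; as $R$ is a spanning $\PG(n'-1,q)$-restriction we may assume, after discarding parallel columns, that every nonzero vector of $\GF(q)^{n'}$ is a column of $A[E(R)]$. Write $X=\{e_1,\dots,e_k\}$ and let $L_i=L_{e_i}$ be the associated lines of $R$. The matching hypothesis says exactly that $r_R\!\left(\bigcup_i L_i\right)=2k$; in particular the $L_i$ are pairwise disjoint, so I can pick points $f_i,g_i$ of $R$ spanning $L_i$ so that $\{f_1,g_1,\dots,f_k,g_k\}$ is independent, and extend it to a basis $\{f_1,g_1,\dots,f_k,g_k,c_1,\dots,c_{n'-2k}\}$ of $M$ with every element in $R$. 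After a projective transformation over $\GF(q)$ (which preserves the property that $A[E(R)]$ is over $\GF(q)$), I may assume that $A_{f_i}=u_i$, $A_{g_i}=u_{k+i}$, and $A_{c_j}=u_{2k+j}$, where $u_1,\dots,u_{n'}$ is the standard basis. Since $e_i\in\cl_M(L_i)$, scaling the column $e_i$ gives $A_{e_i}=u_i+\theta_i u_{k+i}$ for some $\theta_i\in\GF(q^2)$; both coefficients are nonzero since $e_i$ is not parallel to a point of $R$, and $\theta_i\notin\GF(q)$ for the same reason, as otherwise $A_{e_i}\in\GF(q)^{n'}$ would be parallel to a column of $A[E(R)]$.

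Next I would compute $M \con X$. Passing to the quotient of $\GF(q^2)^{n'}$ by $\langle A_{e_1},\dots,A_{e_k}\rangle$, we get $u_i\equiv-\theta_i u_{k+i}$ for $i\le k$, and $\{\bar u_{k+1},\dots,\bar u_{n'}\}$ becomes a basis; a column $v=\sum_t a_t u_t$ of $A[E(R)]$ maps to the vector whose $(k+i)$-th entry is $a_{k+i}-a_i\theta_i$ for $1\le i\le k$ and whose $t$-th entry is $a_t$ for $t>2k$. As $v$ ranges over $\GF(q)^{n'}\setminus\{0\}$ and $\{1,\theta_i\}$ is a $\GF(q)$-basis of $\GF(q^2)$, these images range over exactly the nonzero vectors of $\{(x\ y):x\in\GF(q^2)^k,\ y\in\GF(q)^{n'-2k}\}=Z(n'-k-1,q,k)$, while the columns $A_{e_i}$ become loops. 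Hence the elements of $\si(M \con X)$ arising from $E(R)$ form a restriction isomorphic to $\PG^{(k)}(n'-k-1,q)$; crucially this requires no information about any other columns $M$ might have, since only the sub-configuration coming from $R$ is used. Since $k\le n\le n'-k$, the matroid $\PG^{(k)}(n'-k-1,q)$ has a $\PG^{(k)}(n-1,q)$-restriction (noted after Definition~\ref{defepg}), so $M$ has a $\PG^{(k)}(n-1,q)$-minor.

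I expect the real obstacle to be conceptual: recognising that one should contract $X$ itself, not points of $R$ (contracting a point of $R$ lying on some $L_i$ would only make $e_i$ parallel to the rest of $L_i$). Once that is in hand, the remaining work is the linear algebra of the third paragraph, where the two essential uses of the hypotheses are that the scalars $a_{k+i}-a_i\theta_i$ sweep out all of $\GF(q^2)$ (needing $\theta_i\notin\GF(q)$, hence that $X$ avoids the points of $R$) and that $u_1,\dots,u_{2k}$ are genuinely independent (needing the full $k$-matching hypothesis, not merely pairwise skewness of the $L_i$).
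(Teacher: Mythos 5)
Your proof is correct and takes essentially the same approach as the paper's: contract the unstable set $X$, set coordinates so that $A_{e_i}$ is a $\GF(q^2)$-combination of two basis vectors with a non-$\GF(q)$ ratio $\theta_i$, and verify that the images of the $\GF(q)$-points under contraction sweep out exactly $Z(n'-k-1,q,k)$. The paper packages the computation as a block-matrix pivot on an auxiliary matroid $M^+$ rather than as an explicit quotient-space calculation, but the underlying linear algebra is identical.
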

\begin{proof}
	We show that $\si((M \con X)|E(R)) \cong \PG^{(k)}(n'-1-k,q)$; the result will follow, as $n' - k \ge n$. 
	
	Let $X = \{e_1,\dotsc, e_k\}$ be an $R$-unstable set of size $k$, and for each $1 \le i \le k$, let $\{f_i, f_i'\}$ be a basis in $R$ of the unique line $L_i$ so that $e_i \in \cl_M(L_i)$. Since $\{f_1, \dotsc, f_k, f_1', \dotsc, f_k'\}$ is independent in $R$, and by Corollary~\ref{framenice}, there is a GF$(q^2)$-representation of $M|(X \cup E(R))$ of the following form: 
	\begin{align*}
 \kbordermatrix{ & X & &f_1 \cdots f_k &  f_1' \cdots f_k' & & E(R) - \{f_1, \dotsc, f_k, f'_1, \dotsc, f_k'\} &\\
                        & D & \vrule & I_k   & 0 &\vrule\\       
 & I_k & \vrule & 0 & I_k  & \vrule & Q \\  
   &0 &\vrule & 0& 0 & \vrule &    },
\end{align*}
where  $D$ is a $k \times k$ diagonal matrix whose diagonal entries are contained in $\GF(q^2)-\GF(q)$, and $M(A[E(R)]) \cong \PG(n'-1,q)$, with all entries of $Q$ in $\GF(q)$. Let $P$ be a matrix whose set of columns is $\GF(q)^{n'}$; every nonzero column of $P$ is parallel to some column of $A[E(R)]$. Let
\[A^+ = \kbordermatrix{&X &  & \\
& D & \vrule & P_1  \\
& I_k &\vrule & P_2 \\
&0 & \vrule & P_3 \\},\]
where $P^T = (P_1 \ P_2 \ P_3)^T$, and let $M^+ = M(A^+)$. By definition of $P$, we know that $\si(M^+) \cong M|(X \cup E(R))$, and we have \begin{align*}
M^+ \con X = M\left(
	\begin{array}{c}
		P_1 - DP_2 \\ P_3\\
	\end{array}\right).
\end{align*} For each diagonal entry $\omega$ of $D$, the field $\GF(q^2)$ is a vector space over $\GF(q)$ with basis $\{1,\omega\}$, so it follows from definition of $P$ and $D$ that the set of columns of $\binom{P_1 - DP_2}{P_3}$ is precisely the set $Z(n'-k-1,q,k)$ from Definition~\ref{defepg}. Therefore $\si(M^+ \con X) \cong \PG^{(k)}(n'-k-1,q)$. But $\si(M^+ \con X) \cong \si((M \con X)|E(R))$, so the result follows. 
\end{proof}

We now prove the important fact asserted at the beginning of the last section: that extended projective geometries are the densest matroids in $\cP_{q,k}$. 

\begin{lemma}\label{extendedproj}
	If $q$ is a prime power, and $n$ and $k$ are integers satisfying $0 \le k < n$, then every simple rank-$n$ matroid in $\cP_{q,k}$ is a restriction of $\PG^{(k)}(n-1,q)$, and $h_{\cP_{q,k}}(n) = |\PG^{(k)}(n-1,q)|$. 
\end{lemma}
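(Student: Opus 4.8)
The plan is to show two inclusions. The harder direction is that every simple rank-$n$ matroid $N \in \cP_{q,k}$ embeds as a restriction of $\PG^{(k)}(n-1,q)$; the density statement $h_{\cP_{q,k}}(n) = |\PG^{(k)}(n-1,q)|$ then follows, since $\PG^{(k)}(n-1,q)$ itself lies in $\cP_{q,k}$ (it is a $k$-element projection of a projective geometry over $\GF(q)$ by its very construction, and it is $\GF(q^2)$-representable), and Lemma~\ref{sizeepg} computes its size, while $n > k$ ensures we are in the range where the formula applies and where $\PG^{(k)}(n-1,q)$ genuinely has rank $n$.

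So let $N \in \cP_{q,k}$ be simple of rank $n$. By definition there is a $\GF(q^2)$-representable matroid $M$ and a rank-$k$ independent set $C \subseteq E(M)$ with $M \del C$ a projective geometry over $\GF(q)$ and $N = \si(M \con C)$. Since $M \del C$ is a projective geometry over $\GF(q)$ of rank $r(M \del C)$, and contracting the rank-$k$ set $C$ drops rank by exactly $k$, we have $r(M) = n + k$ and $M \del C \cong \PG(n+k-1, q)$, i.e.\ $R := M \del C$ is a $\PG(r(M)-1,q)$-restriction of $M$. The first step is to put $M$ into a good coordinate form: by Corollary~\ref{framenice}, $R$ is $\GF(q)$-represented in $M$, so there is a $\GF(q^2)$-representation $A$ of $M$ with $A[E(R)]$ having all entries in $\GF(q)$ and with $M(A[E(R)]) = \PG(n+k-1,q)$ in "standard" form (all nonzero columns of $\GF(q)^{n+k}$ appearing up to parallel). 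Writing the columns indexed by $C = \{c_1,\dots,c_k\}$ in the basis $\{1,\omega\}$ of $\GF(q^2)$ over $\GF(q)$, each column $A_{c_i}$ decomposes as $v_i + \omega v_i'$ with $v_i, v_i' \in \GF(q)^{n+k}$; after row operations and column scalings (projective transformations, which preserve $M(A)$ and keep $A[E(R)]$ over $\GF(q)$, using that all $\GF(q)$-columns are already present in $R$) I can arrange $v_i, v_i'$ to be among the standard basis vectors and in fact bring the $C$-block to exactly the form appearing in the proof of Lemma~\ref{contracttoepg}, namely $C$ spans a "diagonal" block $D$ with entries in $\GF(q^2)-\GF(q)$ over a complementary $I_k$. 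This is the step I expect to be the main obstacle: verifying carefully that the $2k$ relevant basis vectors can be chosen independently inside $R$ and that the diagonal entries of $D$ land outside $\GF(q)$ — the latter because if some $c_i$ were in the closure of a $\GF(q)$-flat it would be parallel in $M \con C$ to a point already handled, contradicting that $\{c_i\}$ genuinely drops nothing it shouldn't, and more precisely one uses that $C$ is independent and that $M \del C$ already contains every $\GF(q)$-point.

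With $M$ in this form, the argument of Lemma~\ref{contracttoepg} applies essentially verbatim: introducing the matrix $P$ whose columns are all of $\GF(q)^{n+k}$ and the associated $A^+$, one gets $\si(M^+) \cong M|(C \cup E(R)) = M$ (since $E(M) = C \cup E(R)$) and $\si(M^+ \con C) \cong \PG^{(k)}(n+k-1-k, q) = \PG^{(k)}(n-1,q)$, because contracting $C$ turns the column set of $\binom{P_1 - DP_2}{P_3}$ into exactly $Z(n-1,q,k)$ of Definition~\ref{defepg}. Hence $N = \si(M \con C) = \si(M^+ \con C)$ restricted to... more carefully: $M \con C$ is a restriction of $M^+ \con C$ up to parallel elements, so $\si(M \con C)$ is a restriction of $\si(M^+ \con C) \cong \PG^{(k)}(n-1,q)$; that is, $N$ is (isomorphic to) a restriction of $\PG^{(k)}(n-1,q)$, as claimed. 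Finally, for the density equality: every simple rank-$n$ member of $\cP_{q,k}$ has at most $|\PG^{(k)}(n-1,q)|$ points by the embedding just proved, and $\PG^{(k)}(n-1,q)$ is itself a simple rank-$n$ member of $\cP_{q,k}$ attaining this, so $h_{\cP_{q,k}}(n) = |\PG^{(k)}(n-1,q)|$.
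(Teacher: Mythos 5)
Your plan is to put the representation of $M'$ into the special ``diagonal'' form used in the proof of Lemma~\ref{contracttoepg} (a block $D$ with diagonal entries in $\GF(q^2)-\GF(q)$ sitting above a complementary $I_k$) and then invoke that lemma. That form, however, is exactly what it means for the set $C$ to give an $R$-unstable set: each $c_i$ must lie off $R$ and the associated lines $L_{c_i}$ of $R$ must form a $k$-matching. Nothing in the definition of $\cP_{q,k}$ forces this. For instance, $c_1$ could be parallel to a point of $R$ --- then the column $A_{c_1}$ is a $\GF(q^2)$-scalar multiple of a $\GF(q)$-vector, so in any decomposition $A_{c_1}=v_1+\omega v_1'$ the pair $(v_1,v_1')$ is linearly dependent over $\GF(q)$, and no projective transformation fixing the $\GF(q)$-structure of $A[E(R)]$ can turn this into a column whose $D$-entry lies in $\GF(q^2)-\GF(q)$ above a standard basis vector. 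Similarly, two of the lines $L_{c_i},L_{c_j}$ could coincide or meet, breaking independence of the $2k$ vectors you want to send to standard basis vectors. Your paragraph acknowledging this as ``the main obstacle'' appeals to ``$M\del C$ already contains every $\GF(q)$-point'' and to ``contradicting that $\{c_i\}$ genuinely drops nothing it shouldn't'', but there is no contradiction: $\cP_{q,k}$ genuinely contains such degenerate examples (they just land in $\cP_{q,k-1}\subseteq\cP_{q,k}$), so the reduction to Lemma~\ref{contracttoepg} fails as stated.

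The paper sidesteps this entirely with a weaker, always-available normal form. Starting from a representation $A$ of $M'$ in which $A[E(R)]$ is over $\GF(q)$ (Corollary~\ref{framenice}), write each $A_{c_i}=v_i+\omega v_i'$ and note that $\{v_1,\dots,v_k,v_1',\dots,v_k'\}$ spans a $\GF(q)$-subspace of dimension at most $2k$ --- not necessarily exactly $2k$. A $\GF(q)$-row operation moves that span into the first $2k$ coordinates, so $A[C]$ has nonzero entries only in the first $2k$ rows, while $A[E(R)]$ stays over $\GF(q)$. Pivoting $A[C]$ to $\binom{I_k}{0}$ then only touches the first $2k$ rows, so after deleting $C$ and the first $k$ rows one gets a representation of $M$ in which all entries outside the first $k$ rows lie in $\GF(q)$; every such column lies in $Z(n-1,q,k)$, so $M$ is a restriction of $\PG^{(k)}(n-1,q)$. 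This makes no assumption about how $C$ sits relative to $R$, which is precisely the generality your reduction is missing. If you want to keep your reduction to Lemma~\ref{contracttoepg}, you would need an additional induction on $k$ to dispose of the cases where $C$ fails to be $R$-unstable, and you should say so explicitly; the direct row-operation argument is cleaner.
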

\begin{proof}
	By Lemma~\ref{contracttoepg} applied when $n' = n+k$, the fact that $\PG^{(k)}(n-1,q) \in \cP_{q,k}$ is clear; therefore it suffices to show that every simple matroid $M \in \cP_{q,k}$ has a $\GF(q^2)$-representation in which all entries outside the first $k$ rows are in $\GF(q)$, as such a matroid is a restriction of $\PG^{(k)}(r(M)-1,q)$.
	
	Let $M \in \cP_{q,k}$; thus, let $M'$ be a $\GF(q^2)$-representable matroid, and $C = \{e_1, \dotsc, e_k\}$ be a rank-$k$ independent set in $M'$ with $M' \con C = M$ and $M' \del C \cong \PG(r(M'\del C)-1,q)$. By Lemma~\ref{framenice}, there is a representation $A$ of $M'$ in which all entries of $A[E(M)]$ are in $\GF(q)$. 
	
	Since $\GF(q^2)$ is a dimension-$2$ vector space over $\GF(q)$, we may apply a sequence of elementary row operations, scaling rows and columns only by elements of $\GF(q)$, to $A[C]$ so that all nonzero entries are in the first $2k$ rows. Applying these operations to $A$, and then contracting $C$, yields a representation of $M$ in which all entries outside the first $k$ rows are in $\GF(q)$, giving the result.
\end{proof}

Using the results established so far, we will prove Theorem~\ref{mainresult} by reducing it to the following theorem. We devote the remainder of our efforts to its proof. 

\begin{theorem}\label{getepg}
	There is an integer-valued function $f_{\ref{getepg}}(n,q,k)$ satisfying the following: if $q$ is a prime power, $n$ and $k$ are integers with $0 \le k < n$, and $M$ is a GF$(q^2)$-representable matroid satisfying $r(M) \ge f_{\ref{getepg}}(n,q,k)$ and 
	\[\elem(M) > |\PG^{(k)}(r(M)-1,q)|,\]
	then $M$ has an $\PG^{(k+1)}(n-1,q)$-minor.
\end{theorem}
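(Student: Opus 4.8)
The plan is to deduce the theorem from Lemma~\ref{contracttoepg}: to produce a $\PG^{(k+1)}(n-1,q)$-minor of $M$ it suffices to exhibit a minor $M_1$ of $M$, a \emph{spanning} $\PG(N-1,q)$-restriction $R$ of $M_1$, and an $R$-unstable set of size $k+1$, where $N\ge n+k+1$ is also large enough for an extremal statement about lines used below; Lemma~\ref{contracttoepg} with ``$n'$''$=N$ and unstable set of size $k+1$ then finishes. The proof thus splits into (A) passing to a minor carrying a large spanning $\GF(q)$-geometry on which $M$ is still denser than $\PG^{(k)}$, and (B) using that density to extract the unstable set; I would set $f_{\ref{getepg}}(n,q,k)$ to be whatever lower bound on $r(M)$ the two parts demand.

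For (A) I would first apply Theorem~\ref{densitygk}: choosing a real $\beta$ with $1<\beta<q$ sufficiently close to $q$ and taking $\ell=q^2$, the hypothesis $\elem(M)>|\PG^{(k)}(r(M)-1,q)|\ge\frac{q^{r(M)}-1}{q-1}$ forces, once $r(M)$ is large, a $\PG(N-1,q')$-minor of $M$ for some prime power $q'>\beta$; since $M$ is $\GF(q^2)$-representable and $N\ge 3$, such a geometry is $\GF(q^2)$-representable, so $\GF(q')$ embeds in $\GF(q^2)$, and with $\beta$ chosen as above this forces $q'\in\{q,q^2\}$. If $q'=q^2$ then $\PG(N-1,q^2)$ contains every simple $\GF(q^2)$-representable matroid of rank at most $N$ as a restriction, so we are done as soon as $N\ge n$. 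Otherwise $M$ has a $\PG(N-1,q)$-minor, and contracting onto that minor and then restricting to the closure of the geometry gives a minor of $M$ with a spanning $\PG(N-1,q)$-restriction. \textbf{The main obstacle} is doing this while retaining density: we need the minor $M_1$ we land on to satisfy $\elem(M_1)>|\PG^{(k)}(r(M_1)-1,q)|$, i.e.\ to carry the strictly positive excess $\elem(M)-|\PG^{(k)}(r(M)-1,q)|$ down to low rank, whereas Theorem~\ref{densitygk} yields the geometry with no density control on the small minor and crude contraction loses too much (Lemma~\ref{skewsubset} only preserves density up to a factor like $((\mu-1)/\ell)^{k}$). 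I would handle this by induction on $k$: Theorem~\ref{getepg} for $k-1$ already produces a $\PG^{(k)}(N-1,q)$-minor of $M$, whose $\GF(q)$-points form a spanning $\PG(N-1,q)$-restriction, and one then argues that the excess density of $M$ forces \emph{strictly} more than $|\PG^{(k)}(N-1,q)|$ points in a suitably chosen such minor --- plausibly by repeatedly contracting a point chosen to keep $\elem$ above the rank-shifted threshold, which is feasible because $|\PG^{(k)}(r-1,q)|$ obeys a clean recursion $|\PG^{(k)}(r-1,q)|=q\,|\PG^{(k)}(r-2,q)|+c_{q,k}$ with $c_{q,k}$ independent of $r$, exactly the shape a ``contract a good point'' argument exploits.

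For (B), suppose $M_1$ is $\GF(q^2)$-representable with a spanning $\PG(N-1,q)$-restriction $R$, $\elem(M_1)>|\PG^{(k)}(N-1,q)|$, and $N$ large. By Lemma~\ref{pgframe} each point of $M_1$ not meeting $R$ lies on $\cl_{M_1}(L)$ for a unique line $L$ of $R$; call such an $L$ \emph{used}, and let $\cL$ be the set of used lines. Since $\cl_{M_1}(L)$ has at most $q^2+1$ points, $q+1$ of them in $R$, we get
\[ \elem(M_1)\le\frac{q^N-1}{q-1}+(q^2-q)\,|\cL|. \]
I claim $M_1$ has an $R$-unstable set of size $k+1$ iff $\cL$ contains a $(k+1)$-matching. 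One direction is immediate; conversely, given a $(k+1)$-matching $L_1,\dots,L_{k+1}$ in $\cL$, choose for each $i$ a point $e_i$ of $M_1$ on $\cl_{M_1}(L_i)$ not meeting $R$, so that $L_{e_i}=L_i$ by uniqueness; since the $L_i$ are mutually skew in $R$ and the $e_i$ are non-loops, a one-line submodularity computation shows $\{e_1,\dots,e_{k+1}\}$ is independent, hence $R$-unstable. So it suffices to force a $(k+1)$-matching in $\cL$, and here I would invoke the extremal fact that, for $N$ large in terms of $q$ and $k$, a family of lines of $\PG(N-1,q)$ with no $(k+1)$-matching has at most $g(N,q,k)$ members, where $g(N,q,k)$ is the number of lines meeting a fixed rank-$k$ flat; moreover $\frac{q^N-1}{q-1}+(q^2-q)g(N,q,k)=|\PG^{(k)}(N-1,q)|$, with equality realised by $\PG^{(k)}(N-1,q)$ (whose used lines, relative to its spanning $\GF(q)$-geometry, are exactly those meeting a fixed rank-$k$ flat). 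If $\cL$ had no $(k+1)$-matching we would then get $\elem(M_1)\le|\PG^{(k)}(N-1,q)|$, a contradiction; hence $M_1$ has an $R$-unstable set of size $k+1$, and Lemma~\ref{contracttoepg} completes the proof. Besides part (A), the remaining work is to establish this extremal lemma on line families, which I expect to be a standard, if slightly delicate, stability-type argument valid for $N$ large.
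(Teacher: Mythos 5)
Your Part (B) essentially reconstructs the spanning case of the paper's argument and is basically sound: given a minor $M_1$ of rank $N \ge n+k+1$ with a spanning, $\GF(q)$-represented $\PG(N-1,q)$-restriction $R$ and $\elem(M_1) > |\PG^{(k)}(N-1,q)|$, the extremal matching statement on line families (the paper's Theorem~\ref{pgmatching}, which it proves via Lov\'asz's linear matroid matching theorem) forces a $(k+1)$-matching among the ``used'' lines, hence an $R$-unstable set of size $k+1$, and Lemma~\ref{contracttoepg} applied to $M_1$ finishes. Your precise statement of the matching lemma is slightly off --- the correct version allows a bounded exceptional set $\cL_0$ of lines not meeting $F$, with equality $\cL_0 = \varnothing$ only guaranteed when $r(F)=k$ --- but for $N$ large the error is absorbed, as you anticipate.

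The genuine gap is in Part (A), and your proposed fix does not close it. You correctly observe that Theorem~\ref{densitygk} yields a $\PG(N-1,q)$-minor with no density control, so contracting onto it and restricting to its closure may destroy the entire excess $\elem(M)-|\PG^{(k)}(r(M)-1,q)|$. But the induction-on-$k$ remedy you sketch does not supply what is needed: the inductive hypothesis produces a minor that \emph{is} $\PG^{(k)}(N-1,q)$, with exactly $|\PG^{(k)}(N-1,q)|$ points and not one more, so it gives no leverage towards a minor that is strictly denser than $\PG^{(k)}$. The induction also has no base case --- $k=0$ is already the main theorem of [\ref{gn}], which required substantial machinery. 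And the ``contract a good point'' heuristic fails over $\GF(q^2)$: a single point of $M\con e$ can absorb up to $q^2$ points of $M$, so $\elem(M\con e)$ can drop as low as roughly $\elem(M)/q^2$, well below the threshold $|\PG^{(k)}(r(M)-2,q)| \approx \elem(M)/q$, and there is nothing in your argument to rule out every available $e$ behaving this way.

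The paper's resolution of exactly this difficulty is a dichotomy that your proposal is missing. It first reduces to a weakly round restriction (Lemmas~\ref{weakroundrestriction} and~\ref{weakroundnessreduction}); weak roundness is the deliberately weakened connectivity notion the paper introduces because full roundness is incompatible with base-$q$ exponential density. Then Lemma~\ref{mainreduction} shows that a weakly round, dense matroid with a large $\PG(\cdot,q)$-minor either (i) has a minor carrying a weakly round, spanning, $\GF(q)$-represented restriction together with the density excess --- which is the input your Part (B) requires, handled in the paper by Lemma~\ref{spanningwin} --- or (ii) has a constellation restriction: a large independent set of points, each the apex of many long ($q^2+1$-point) lines, handled separately by Lemma~\ref{constellationwin}. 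Case (ii) is precisely what can occur when the contraction set needed to reach the projective geometry is large: each contraction that drops $\elem$ below the $\PG^{(k)}$ threshold must have destroyed many long lines through the contracted point, and accumulating these gives the constellation. Without an analogue of this fallback, the density-preserving reduction to the spanning case that your Part (A) needs is simply not always available.
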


\section{Matching in Projective Geometries}

	To construct the extremal matroids of the last two sections, we need to consider matchings in spanning projective geometries. The first theorem of this section follows easily from the linear matroid matching theorem of Lov\'asz ([\ref{lovasz}], Theorem 2), but is significantly weaker, and has a relatively short self-contained proof, which we include here. It gives a partly qualitative sufficient condition for the existence of a large matching.
	
	\begin{theorem}\label{pgmatching}
		There is an integer-valued function $f_{\ref{pgmatching}}(q,k)$ satisfying the following: if $q$ is a prime power, $n \ge 1$ and $k \ge 0$ are integers, and $M \cong \PG(n-1,q)$ is a matroid, then for any set $\cL$ of lines of $M$, either
		\begin{itemize}
			\item $\cL$ contains a $(k+1)$-matching of $M$, or
			\item There is a flat $F$ of $M$ with $r_M(F) \le k$, and a set $\cL_0 \subseteq \cL$ with $|\cL_0| \le f_{\ref{pgmatching}}(q,k)$, such that every line $L \in \cL$ either intersects $F$, or is in $\cL_0$. Moreover, if $r_M(F) = k$, then $\cL_0 = \varnothing$. 
		\end{itemize}
	\end{theorem}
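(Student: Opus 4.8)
My plan is to argue by induction on $k$, extracting a large matching greedily while controlling the lines that cannot be added. Fix $M \cong \PG(n-1,q)$ and a set $\cL$ of lines. First I would take a maximal matching $\cL' = \{L_1, \dots, L_m\} \subseteq \cL$; if $m \ge k+1$ we are in the first alternative, so assume $m \le k$. Let $F_0 = \cl_M(L_1 \cup \dots \cup L_m)$, a flat of rank $2m \le 2k$. By maximality, every line $L \in \cL$ meets $F_0$; the task is to replace $F_0$ by a flat $F$ of rank at most $k$ (rather than $2k$) at the cost of putting boundedly many lines into an exceptional set $\cL_0$, and to arrange $\cL_0 = \varnothing$ when the final rank is exactly $k$.

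The key mechanism is a counting/pigeonhole step inside the bounded-rank projective geometry $M|F_0$. Since $r_M(F_0) \le 2k$, the flat $F_0$ has at most $\frac{q^{2k}-1}{q-1}$ points, a bound depending only on $q$ and $k$. For each line $L \in \cL$ that is not contained in $F_0$, the intersection $L \cap F_0$ is a single point (a line $L$ with $L \subseteq F_0$ can be shown to already be handled, since $\cL' $ is a matching spanning $F_0$ and one can absorb such lines or conclude a larger matching). So the lines of $\cL$ not lying in $F_0$ are partitioned according to which point of $F_0$ they pass through. For a point $p \in F_0$, consider the lines of $\cL$ through $p$: either only boundedly many of them exist and point in "independent directions", in which case we can put them in $\cL_0$; or there are enough of them that, after projecting from $p$ (contracting $p$), we can apply the inductive hypothesis with parameter $k-1$ to the projected lines in $M \con p \cong \PG(n-2,q)$. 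The induction yields either a $k$-matching in $M\con p$ — which lifts to a $(k+1)$-matching in $M$ together with care to keep $p$ out of the span, giving the first alternative — or a flat $F'$ of rank at most $k-1$ in $M \con p$ with an exceptional set of size at most $f_{\ref{pgmatching}}(q,k-1)$; then $F = \cl_M(F' \cup \{p\})$ has rank at most $k$, and pulling back the exceptional set and summing over the (boundedly many) points $p \in F_0$ gives the bound $f_{\ref{pgmatching}}(q,k) \le \frac{q^{2k}-1}{q-1}\cdot(f_{\ref{pgmatching}}(q,k-1)+O(1))$ or similar. The base case $k=0$ asks: either $\cL$ has a $1$-matching (i.e. $\cL \neq \varnothing$), or every line meets the rank-$0$ flat (impossible unless $\cL = \varnothing$) — so $\cL_0 = \cL = \varnothing$, which is consistent.

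The main obstacle I anticipate is the bookkeeping around the "moreover" clause: ensuring that whenever the returned flat $F$ has rank exactly $k$, the exceptional set $\cL_0$ is genuinely empty. This forces the argument to be set up so that $\cL_0$ only collects lines during steps that strictly drop the eventual rank below $k$; concretely, one should only invoke the inductive "$F'$ plus nonempty $\cL_0'$" outcome when $r_M(F) < k$ is guaranteed, and otherwise push hard enough (via the Geelen--Kabell-style density/pigeonhole input or a direct projective-geometry count) to reach either a full $(k+1)$-matching or a rank-exactly-$k$ flat meeting \emph{every} line of $\cL$. Managing the interplay between "many lines through a common point" versus "lines spread thinly over the bounded flat $F_0$" so that these two regimes exhaust all cases, with the thin regime contributing only the bounded set $\cL_0$ and only when the rank is not yet $k$, is where the real care is needed; the rest is routine projective-geometry estimation using $|F_0| \le \frac{q^{2k}-1}{q-1}$.
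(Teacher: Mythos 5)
Your plan takes a genuinely different route from the paper. The paper does not induct on $k$: it directly lets $C$ be a maximal independent set of points each incident with more than $\tfrac{q^{2k+3}-1}{q-1}$ lines of $\cL$, sets $F = \cl_M(C)$, and lets $\cL_0$ be the lines of $\cL$ skew to $C$. It then proves two self-contained claims: a greedy extension argument showing $\cL$ contains a $|C'|$-matching (and a $(|C'|+1)$-matching if some line of $\cL$ is skew to $C'$), and a counting argument bounding $|\cL_0|$ via a maximal matching inside $\cL_0$ together with the degree cap on points outside $\cl_M(C)$. The ``moreover'' clause is then immediate: if $r_M(F) = k$ and some $L \in \cL_0$ were skew to $C$, the extension claim would produce a $(k+1)$-matching. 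You instead start from a maximal matching $\cL'$, bound $F_0 = \cl_M(\bigcup \cL')$, and pigeonhole lines by their intersection point with $F_0$, recursing in $M \con p$ for high-degree points $p$.

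There is a terminological slip you should be aware of: a line of $\cL$ through $p$ contracts to a single point of $\si(M\con p)$, not to a line, so ``apply the inductive hypothesis to the projected lines'' cannot literally mean the lines through $p$. What you appear to mean is applying the hypothesis with parameter $k-1$ to the lines of $\cL$ not through $p$ (which do remain lines in $M\con p$), and then extending a resulting $k$-matching by one more line through $p$ chosen to leave the span. That extension is precisely the nontrivial step the paper isolates and proves; it is not free and requires a degree threshold on $p$ of order $\tfrac{q^{2k+1}-1}{q-1}$.

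The more serious gap is the one you flag but do not resolve: the ``moreover'' clause fails for your decomposition as stated. Your $\cL_0$ collects all lines of $\cL$ through low-degree points of $F_0$, and those lines need not meet $F = \cl_M(F' \cup \{p\})$. Even if the recursion in $M\con p$ returns a rank-$(k-1)$ flat $F'$ with empty exceptional set, so that $r_M(F) = k$, low-degree points of $F_0$ outside $F$ can persist, and the lines of $\cL$ through them are neither through $p$ nor covered by $F'$. Those lines sit in $\cL_0$ while $r_M(F) = k$, contradicting the required conclusion. The paper sidesteps this because its exceptional set is \emph{by definition} the lines skew to $C$, and its extension claim shows that once $|C| = k$ any such line yields a $(k+1)$-matching. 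To repair your argument you need a mechanism of the same kind: whenever your $F$ reaches rank $k$ and a line of $\cL$ avoids it, explicitly build a $(k+1)$-matching by a greedy extension through the high-degree centres, which is essentially the paper's first claim restated.
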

	\begin{proof}
		Set 
		\[f_{\ref{pgmatching}}(q,k) = \frac{(q^{2k}-1)(q^{2k+3}-1)}{(q-1)^2} \]
		For every $e \in E(M)$, we write $\deg_\cL(e) = |\{L \in \cL: e \in L\}|$. Let $C \subseteq E(M)$ be a maximal independent set so that \[\deg_{\cL}(e) > \frac{q^{2k+3}-1}{q-1}\] for every $e \in C$. let $C' = C$ if $|C| \le k$, and $C'$ be a $(k+1)$-subset of $C$ otherwise. 
	\begin{claim}\label{matchingclaim1}
		$\cL$ contains a $|C'|$-matching. Moreover, if there is a line $L$ in $\cL$ skew to $C'$, then $\cL$ contains a $(|C'|+1)$-matching. 
	\end{claim}
	\begin{proof}[Proof of claim:]
		We prove the second part of the claim; the proof of the first part is similar but simpler. Let $|C'| = \{e_1, \dotsc, e_{|C'|}\}$. Let $j$ be maximal so that $0 \le j \le |C'|$, and  so that there is a $(j+1)$-matching $\cL_j = \{L, L_1, \dotsc, L_j\}$ so that $\cL_j \subseteq \cL$, and for each $1 \le i \le j$, we have $L_i \cap \cl_M(C') = \{e_i\}$. If $j = |C'|$, then $\cL_j$ satisfies the claim; we may therefore assume that $j < |C'|$. Since $\cL_j$ is a matching, and every line in $\cL_j - \{L\}$ meets $C'$ in a point, we have $r_M\left(C' \cup \bigcup_{L' \in L_j}(L')\right) = |C'|+2+j \le 2|C'|+1$.
		
		Since $\deg_{\cL}(e_{j+1}) > \frac{q^{2k+3}-1}{q-1} \ge \frac{q^{2|C'|+1}-1}{q-1}$, and $M$ is $\GF(q)$-representable, there is a set $X$ so that $\cl_M(\{x,e_{j+1}\}) \in \cL$ for all $x \in X$, and $r_M(X) > 2|C'|+1$. There is therefore some $x \in X$ not in $\cl_M(C' \cup \bigcup_{L' \in L_j}(L'))$. Now, $\cL_j \cup \{\cl_M(\{x,e_{j+1}\})\}$ is a matching of $M$, contradicting the maximality of $j$. 
	\end{proof}
	
	Suppose that the first outcome of the theorem does not hold; by \ref{matchingclaim1}, we may assume that $|C| \le k$. Let $\cL_0$ be the set of lines in $\cL$ that are skew to $C$.
	
	\begin{claim}
		$|\cL_0| \le f_{\ref{pgmatching}}(q,k)$.  
	\end{claim}
	\begin{proof}[Proof of claim:]
		By maximality of $C$, for each $e \notin \cl_M(C)$, we have $\deg_{\cL}(e) \le  \frac{q^{2k+3}-1}{q-1}$. Let $\cL_0'$ be a maximal matching contained in $\cL_0$, and let $F'$ be the flat spanned in $M$ by the lines in $\cL_0'$. We may assume that $|\cL_0'| \le k$, so $|F'| \le \frac{q^{2k}-1}{q-1}$. By maximality of $\cL_0'$ and modularity of $F'$, each $L \in \cL_0$ contains a point in $F'$, so the claim follows by this bound on $|F'|$, and our degree bound. 
	\end{proof}
	
	We now set $F = \cl_M(C)$. The flat $F$ is modular, so every line in $\cL - \cL_0$ meets $F$. If $r_M(F) = k$, and $L \in \cL_0$, then by~\ref{matchingclaim1}, $\cL$ contains a $(k+1)$-matching. So if $r_M(F) = k$, we must have $\cL_0 = \varnothing$. Now, $F$ and $\cL_0$ satisfy the second outcome of the lemma. 
	
	\end{proof}

	An easy application of this theorem allows us to find an unstable set:
	
	\begin{lemma}\label{findunstable}
		There is an integer-valued function $f_{\ref{findunstable}}(q,k)$ satisfying the following: if $q$ is a prime power, $k \ge 0$ is an integer, $M$ is a GF$(q^2)$-representable matroid, and $R$ is a $\GF(q)$-represented $\PG(r(M)-1,q)$-restriction of $M$, then either
		\begin{itemize}
			\item There is an $R$-unstable set of  size $k+1$ in $M$, or
			\item There is some $C \subseteq E(R)$ so that $r_M(C) \le k$, and $\epsilon(M \con C) \le \epsilon(R \con C) + f_{\ref{findunstable}}(q,k)$. 
		\end{itemize}
	\end{lemma}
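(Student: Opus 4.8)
The idea is to apply Theorem~\ref{pgmatching} to the set of lines of $R$ that ``carry'' elements of $M$ not already parallel into $R$, and then interpret the two outcomes. Concretely, by Lemma~\ref{pgframe}, each nonloop $e\in E(M)$ is either parallel to a point of $R$, or determines a unique line $L_e$ of $R$ with $e\in\cl_M(L_e)$; let $\cL=\{L_e: e\in E(M)$, $e$ not parallel to a point of $R\}$, a set of lines of $R\cong\PG(r(M)-1,q)$. Apply Theorem~\ref{pgmatching} with parameter $k$ to $\cL$. In the first outcome we get a $(k+1)$-matching $\{L_{e_1},\dots,L_{e_{k+1}}\}$ in $R$; choosing one witnessing element $e_i$ for each line, the set $X=\{e_1,\dots,e_{k+1}\}$ has $r_M(X)\le k+1$ and, since the lines are mutually skew, we can discard at most the dependencies among the $e_i$ to extract an independent subset whose associated lines still form a matching — i.e.\ an $R$-unstable set. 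The one subtlety is making sure $X$ itself is independent of size exactly $k+1$: since the $L_{e_i}$ span a rank-$2(k+1)$ flat and each $e_i$ lies freely-ish on $\cl_M(L_{e_i})$ off the point set of $R$, the $e_i$ are independent; if not, pass to a $(k+1)$-sized independent ``transversal'' using that any rank-$k$ set among them would force a matching-line to meet a low-rank flat, contradicting skewness. This gives the first bullet.

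**The second outcome.** If Theorem~\ref{pgmatching} instead returns a flat $F$ of $R$ with $r_R(F)\le k$ and a set $\cL_0\subseteq\cL$ with $|\cL_0|\le f_{\ref{pgmatching}}(q,k)$ such that every line in $\cL$ either meets $F$ or lies in $\cL_0$, then I set $C$ to be a basis of $F$ in $R$ (a rank-$\le k$ subset of $E(R)$) and bound $\epsilon(M\con C)$. The point is: contract $C$. An element $e\in E(M)$ either (i) is parallel to a point of $R$, in which case it maps into $R\con C$; or (ii) has $L_e$ meeting $F$, in which case $L_e\con C$ is a point (or loop) of $R\con C$ and $e\con C$ is parallel to that point, so again $e$ contributes nothing new beyond $\epsilon(R\con C)$; or (iii) has $L_e\in\cL_0$. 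Case (iii) elements live on at most $|\cL_0|$ lines of $R$, and each such line $L_e$ contains at most $q+1$ points of $R$ plus the extra elements of $M$ on $\cl_M(L_e)$ — but after contracting $C$ the rank drops, and what we actually need is just a crude count: the elements of type (iii), together with all of $E(R)$, live in $R$ extended by points on at most $f_{\ref{pgmatching}}(q,k)$ lines, and contracting $C$ collapses $R$ onto $R\con C$; so $\epsilon(M\con C)\le\epsilon(R\con C)+ (\text{number of distinct points of }M\con C\text{ coming from type (iii)})$. Setting $f_{\ref{findunstable}}(q,k)$ to be, say, $(q^2+1)f_{\ref{pgmatching}}(q,k)$ — a bound on how many new simple points the type-(iii) elements on $\le f_{\ref{pgmatching}}(q,k)$ lines can contribute after contraction — gives the second bullet. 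The ``Moreover'' clause of Theorem~\ref{pgmatching} (that $\cL_0=\varnothing$ when $r_M(F)=k$) is a convenience I may use to tighten the count but is not strictly needed.

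**Main obstacle.** The routine bookkeeping in the second outcome is the part that needs care: one has to be sure that an element $e$ of type (ii) really does become parallel to something already counted in $\epsilon(R\con C)$ — this uses that $L_e$ meets $F=\cl_R(C)$, so $L_e$ and $C$ are not skew, hence $r_{M}(L_e\cup C)\le r_M(C)+1$, so $e\con C$ lies on a rank-$\le1$ set spanned by $L_e\con C$, which is a point or loop of $R\con C$; and that $e$ of type (i), being parallel in $M$ to a point $p$ of $R$, stays parallel to $p\con C$ in $M\con C$. Then only type-(iii) elements, on at most $|\cL_0|$ lines of $R$, can give genuinely new points, and each line contributes a bounded number. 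The mild technical worry — which element to pick off each matching line in the first outcome so that $X$ is honestly independent of size $k+1$ — is handled by the skewness of the matching as sketched above. Everything else is an application of Lemma~\ref{pgframe}, Corollary~\ref{framenice}, and Theorem~\ref{pgmatching}.
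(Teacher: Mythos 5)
Your proof is correct and follows essentially the same approach as the paper's: apply Theorem~\ref{pgmatching} to the set of lines of $R$ that carry extra points of $M$, convert a $(k+1)$-matching into an $R$-unstable set, and otherwise take $C=F$ (your basis of $F$ is equivalent) with $f_{\ref{findunstable}}(q,k)=(q^2+1)f_{\ref{pgmatching}}(q,k)$ bounding the contribution from lines in $\cL_0$. The independence worry in the first outcome is unnecessary: once $M$ is simplified, the flats $\cl_M(L_{e_i})$ are mutually skew in $M$ because the lines $L_{e_i}$ are mutually skew in the restriction $R$, so any choice of one point from each $\cl_M(L_{e_i})\setminus L_{e_i}$ is automatically an independent set of elements not parallel to points of $R$.
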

	\begin{proof}
		Set $f_{\ref{findunstable}}(q,k) = (q^2+1)f_{\ref{pgmatching}}(q,k)$. We may assume that $M$ is simple; let $\cL$ be the set of lines $L$ of $R$ such that $|\cl_M(L)| > |\cl_R(L)|$. If $\cL$ contains a $(k+1)$-matching of $R$, then choosing a point from $\cl_M(L) - \cl_R(L)$ for each line $L$ in the matching gives an $R$-unstable set of size $k+1$. We may therefore assume that $\cL$ contains no such matching. Thus, let $F$ and $\cL_0$ be the sets defined in the second outcome of Theorem~\ref{pgmatching}. Let $C = F$, and $D = \cup_{L \in \cL_0} L$. We have $|D| \le (q^2+1)|\cL_0| \le f_{\ref{findunstable}}(q,k)$. By Lemma~\ref{pgframe}, each point of $M \del R$ lies in the closure of a line in $\cL$, so $\elem((M \con C) \del E(R)) \le \elem((M \con C)| D)$; the result now follows. 
	\end{proof}
	\begin{lemma}\label{finddistinctpoints}
			Let $q$ be a prime power, $d \ge 0$ be an integer, $M$ be a GF$(q^2)$-representable matroid, and $R$ be a $\PG(r(M)-1,q)$-restriction of $M$. If $\cL$ is a set of lines of $M$ so that $|L| > q+1$ for all $L \in \cL$, and $|\cL| > \binom{d+1}{2}$, then $\epsilon(M) > \epsilon(R) + d$.
	\end{lemma}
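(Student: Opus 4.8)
The plan is to reduce to the case where $M$ is simple and then prove the clean combinatorial inequality $|\cL| \le \binom{t+1}{2}$, where $t := \epsilon(M) - \epsilon(R)$. Passing to $\si(M)$ changes neither $\epsilon(M)$, $\epsilon(R)$, nor the hypothesis on $\cL$ (each line still has more than $q+1$ points, and since $R \cong \PG(r(M)-1,q)$ is simple it persists as a restriction), so I may assume $M$ simple; then $t = |E(M)| - |E(R)|$ counts exactly the elements of $M$ lying outside $R$, which I call the \emph{new} points. As $\binom{m}{2}$ is nondecreasing in $m \ge 0$, the bound $|\cL| \le \binom{t+1}{2}$ forces $t > d$ whenever $|\cL| > \binom{d+1}{2}$, which is the assertion.

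To prove $|\cL| \le \binom{t+1}{2}$, I first observe that every $L \in \cL$ contains a new point: $L \cap E(R)$ has rank at most $r_M(L) = 2$ in $R \cong \PG(r(M)-1,q)$, hence lies in a flat of $R$ with at most $q+1$ points, whereas $|L| > q+1$. Now I split $\cL$. A line $L \in \cL$ carrying two or more new points determines, by an arbitrary choice, an unordered pair of new points; distinct rank-$2$ flats of a matroid meet in at most one point, so distinct such lines receive distinct pairs, bounding their number by $\binom{t}{2}$. A line $L \in \cL$ carrying exactly one new point $p$ satisfies $|L \cap E(R)| = |L| - 1 \ge q+1$, so, having rank at most $2$ in $R$, the set $L \cap E(R)$ must be a full line $\ell$ of $R$, and $\cl_M(\ell) = L \ni p$. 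By the uniqueness clause of Lemma~\ref{pgframe} (available since $R$ is $\GF(q)$-represented in $M$, by Corollary~\ref{framenice}), $\ell$ is the unique line of $R$ whose $M$-closure contains $p$, so $L = \cl_M(\ell)$ is determined by $p$ alone; hence there are at most $t$ such lines. Adding, $|\cL| \le \binom{t}{2} + t = \binom{t+1}{2}$, as wanted.

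The argument is genuinely short and I foresee no serious obstacle. The only point that needs care is the case of a line with exactly one new point, where one must verify that its intersection with $R$ is truly a full line of $R$ and that the new point lies in the $M$-closure of that line, so that the uniqueness in Lemma~\ref{pgframe} applies and pins $L$ down — and this is exactly where the strict inequality $|L| > q+1$, rather than $|L| \ge q+1$, gets used.
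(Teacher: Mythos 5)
Your proof is correct and follows essentially the same approach as the paper: reduce to $M$ simple, note every line in $\cL$ has a new point, split $\cL$ into two parts bounded respectively by $\binom{t}{2}$ (pair-counting) and by $t$ (injectivity via the uniqueness in Lemma~\ref{pgframe}), and conclude. The paper partitions $\cL$ by whether $|L\cap E(R)|>1$ rather than by whether $L$ has exactly one or at least two new points, and phrases the finish as a dichotomy rather than as the single inequality $|\cL|\le\binom{t+1}{2}$, but these are cosmetic differences and the key ingredients are identical.
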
		
	\begin{proof}
		We may assume that $M$ is simple; it therefore suffices to show that $|M \del R| > d$. By Corollary~\ref{framenice}, $R$ is $\GF(q)$-represented, and clearly, $L - E(R)$ is nonempty for every $L \in \cL$. For each $L \in \cL$, let $e_L \in L - E(R)$. Let $\cL_0 = \{L \in \cL: |L \cap E(R)| > 1\}$. Since $L \cap E(R)$ is a line of $R$ for each $L \in \cL_0$,  Lemma~\ref{pgframe} implies that the points $e_L: L \in \cL_0$ are distinct, so $\elem(M) \ge \elem(R) + |\cL_0|$. We may thus assume that $|\cL_0| \le d$, and therefore that $|\cL - \cL_0| > \binom{d+1}{2} - d = \binom{d}{2}$. 
		
		Now, each $L \in \cL - \cL_0$ contains at least two points of $M \del R$, and no two lines in $\cL - \cL_0$ contain two common points of $M \del R$, so it follows that $|\cL - \cL_0| \le \binom{|M \del R|}{2}$, and therefore that $|M \del R| > d$. 
	\end{proof}
\section{Weak Roundness}	

The results in this section concern the existence of dense, highly-connected
restrictions of large rank in dense matroids of very large rank.
These are similar to results in [\ref{gn}, Section 2]
where the notion of connectivity is \textit{roundness} (a matroid $M$ is round
if its ground set admits no partition into two sets of smaller rank than $M$). However, roundness has shortcomings
when the density is exponential with base $2$.
The rank-$r$ binary affine geometry has $2^{r-1}$
points and its only round restrictions have rank at most $1$.
This necessitates relaxing our connectivity notion.

	A matroid $M$ is \textit{weakly round} if $E(M)$ cannot be partitioned into sets $A$ and $B$ with $r(M|A) \le r(M)-1$ and $r(M|B) \le r(M)-2$. It is easy to check that this property is closed under both contraction and simplification. 
	
	Weak roundness is a vital property in our proof of Theorem~\ref{getepg}, and this section provides a means to reduce this theorem to the weakly round case; we prove that a dense matroid of very large rank has a similarly dense, weakly round restriction of large rank. 
	
	The following lemma is very similar to one that was proved in [\ref{gn}].
	\begin{lemma}\label{weakroundrestriction}
	Let $f(n)$ be an real-valued function satisfying $0 < f(1) \le f(2)$, and $f(n) \ge f(n-1) + f(n-2)$ for all $n > 2$. If $M$ is a matroid with $r(M) \ge 1$ and $\elem(M) > f(r(M))$, then $M$ has a weakly round restriction $N$ with $r(N) \ge 1$ and $\elem(N) > f(r(N))$. 
	\end{lemma}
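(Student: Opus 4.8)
The plan is to induct on $r(M)$. When $r(M) \le 2$, no partition of $E(M)$ into sets $A$ and $B$ with $r(M|A) \le r(M)-1$ and $r(M|B) \le r(M)-2$ can exist: if $r(M)=1$ then $B$ would have rank at most $-1$, which is impossible, and if $r(M)=2$ then $M$ has two nonparallel nonloops, which cannot be distributed among $A$ and $B$ without forcing $r(M|A) \ge 2$ or $r(M|B) \ge 1$. So in these cases $M$ is itself weakly round and we take $N = M$. For $r(M) \ge 3$, if $M$ is weakly round we are again done with $N = M$, so assume $E(M)$ admits a partition into sets $A$ and $B$ with $r(M|A) \le r(M)-1$ and $r(M|B) \le r(M)-2$.

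The key inequality is $\elem(M) \le \elem(M|A) + \elem(M|B)$: every parallel class of nonloops of $M$ meets $A$ or $B$, since $A \cup B = E(M)$, and so contributes to $\elem(M|A)$ or to $\elem(M|B)$. Since $r(M) \ge 3$, the recurrence gives $f(r(M)) \ge f(r(M)-1) + f(r(M)-2)$, so $\elem(M) > f(r(M))$ forces $\elem(M|A) > f(r(M)-1)$ or $\elem(M|B) > f(r(M)-2)$; write $M'$ for the corresponding restriction, so that $r(M') < r(M)$. A short induction using the recurrence shows that $f$ is positive and nondecreasing on the positive integers (as $f(1) > 0$, $f(2) \ge f(1)$, and $f(n) \ge f(n-1) + f(n-2) > f(n-1)$ for $n \ge 3$). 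Because $r(M) \ge 3$, the arguments $r(M)-1$ and $r(M)-2$ are positive integers, so $\elem(M') > 0$ and hence $r(M') \ge 1$; and since $r(M') \le r(M)-1$ (respectively $r(M') \le r(M)-2$), monotonicity gives $f(r(M')) \le f(r(M)-1) < \elem(M')$ (respectively $f(r(M')) \le f(r(M)-2) < \elem(M')$). Applying the induction hypothesis to $M'$ now yields a weakly round restriction $N$ of $M'$, hence of $M$, with $r(N) \ge 1$ and $\elem(N) > f(r(N))$, completing the induction.

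The argument is routine — a close variant is proved in [\ref{gn}] — so no step is a genuine obstacle; the points requiring a little care are the subadditivity of $\elem$ under a partition of the ground set, the verification that $f$ is nondecreasing on the positive integers (which is what licenses replacing $f(r(M)-i)$ by $f(r(M'))$), and the observation that the restriction produced at each step still has positive rank, so that the induction can continue. It is worth noting that the two base cases $r(M) \in \{1,2\}$ are exactly the ranks for which the recurrence defining $f$ is not assumed to hold.
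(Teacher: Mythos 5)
Your proof is correct and follows essentially the same argument as the paper's: the paper phrases the induction contrapositively ("Inductively, we may assume that $\elem(M|A) \le f(r(M)-1)$ and $\elem(M|B) \le f(r(M)-2)$") and derives a contradiction with $\elem(M) > f(r(M))$, while you run it directly. You spell out a couple of points the paper leaves implicit — the monotonicity of $f$ needed to pass from $f(r(M)-i)$ to $f(r(M'))$, and the fact that the chosen restriction has positive rank so the induction can recur — which is a welcome bit of extra care rather than a deviation.
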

	\begin{proof}
		We may assume that $M$ is not weakly round, so $r(M) > 2$, and there is a partition $(A,B)$ of $E(M)$ with $r_M(A) \le r(M)-1$ and $r_M(B) \le r(M)-2$. Inductively, we may assume that $\elem(M|A) \le f(r(M)-1)$ and $\elem(M|B) \le f(r(M)-2)$. So $\elem(M) \le f(r(M)-1) + f(r(M)-2) \le f(r(M))$, a contradiction.
	\end{proof}
	The next lemma contains the connectivity reduction that is key to our main proof. It is used in two distinct parts of the proof with respect to two different density functions, and is thus stated in an abstract way. 
	
	\begin{lemma}\label{weakroundnessreduction}
		There is a real-valued function $f_{\ref{weakroundnessreduction}}(\ell,\alpha,r)$ satisfying the following: if $\alpha > 0$ is a real number, $\ell \ge 2$ is an integer, and $g(n)$ is a real-valued function satisfying $g(1) \ge \alpha$, and $g(n) \ge 2g(n-1)$ for all $n > 1$,  and $M \in \cU(\ell)$ is a matroid satisfying $\elem(M) > g(r(M))$ and $r(M) \ge f_{\ref{weakroundnessreduction}}(\ell,\alpha,r)$, then $M$ has a weakly round restriction $N$ so that $\elem(N) > g(r(N))$, and $r(N) \ge r$.
	\end{lemma}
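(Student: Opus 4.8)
The plan is to iterate Lemma~\ref{weakroundrestriction}, but with the density function shifted each time we apply it, so that after enough iterations we land on a weakly round restriction whose rank we can force to be at least $r$. First I would fix a function $f(n)$ of the form required by Lemma~\ref{weakroundrestriction}, namely a nonnegative, nondecreasing function with $f(n) \ge f(n-1)+f(n-2)$; the natural choice is a Fibonacci-type function $f(n) = \alpha \phi^{n}$ (rescaled so $0 < f(1) \le f(2)$), where $\phi = \tfrac{1+\sqrt5}{2}$, so that $f(n-1)+f(n-2) = f(n)$ exactly. Since $g(n) \ge 2 g(n-1) \ge \cdots \ge 2^{n-1} g(1) \ge 2^{n-1}\alpha$ and $2 > \phi$, for every constant $C$ there is a rank threshold beyond which $g(n) > C f(n)$; this is the mechanism that lets us absorb the "loss" incurred at each step.

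The key steps, in order. (1) Suppose $M \in \cU(\ell)$ has $\elem(M) > g(r(M))$ and $r(M)$ very large. If $M$ is already weakly round, take $N = M$ and stop. (2) Otherwise, by weak roundness failing, there is a partition $(A,B)$ of $E(M)$ with $r_M(A) \le r(M)-1$ and $r_M(B) \le r(M)-2$; since $\elem$ is subadditive under such partitions and $g(n) \ge g(n-1)+g(n-2)$ would give an immediate contradiction, at least one of $M|A$, $M|B$ — say $M'$ — satisfies $\elem(M') > g(r(M'))$ while $r(M') \ge r(M)-2$. Wait — this direct approach already gives a weakly round restriction of rank close to $r(M)$ if we simply iterate, because each step drops the rank by at most $2$; so really the argument is: repeatedly replace $M$ by the denser side of the partition until the result is weakly round, and bound how far the rank can have dropped. (3) The subtlety is that the rank could, a priori, drop all the way to something small before weak roundness is achieved. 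To control this, I would instead run the iteration only while the current rank exceeds some threshold $t = t(\ell,\alpha,r)$, and show that once $r(M) > t$ the process cannot have produced a non-weakly-round matroid of rank $< r$: if at some stage the rank has dropped below $r$, then because the rank drops by at most $2$ per step and started above $f_{\ref{weakroundnessreduction}}(\ell,\alpha,r)$, we have performed at least $\tfrac12(f_{\ref{weakroundnessreduction}}(\ell,\alpha,r) - r)$ steps; but the density hypothesis $\elem > g(\cdot)$ is preserved at every step, and $g(n)/\tbinom{\ell^{n}-1}{\ell-1} \to \infty$ is impossible by Theorem~\ref{kungdensity} once $n$ is below a fixed bound — contradiction for $f_{\ref{weakroundnessreduction}}$ large enough. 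Hence the iteration must terminate at a weakly round $N$ with $r(N) \ge r$ and $\elem(N) > g(r(N))$.

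The main obstacle I anticipate is making step (3) quantitatively honest: I must choose $f_{\ref{weakroundnessreduction}}(\ell,\alpha,r)$ so that the number of available iteration steps, each preserving $\elem(M) > g(r(M))$ with $g$ growing at least geometrically with ratio $2$, forces the rank to stay above $r$ before weak roundness kicks in. Concretely, I would argue that $g(n) > \tbinom{\ell^{n}-1}{\ell-1} \ge \elem(N')$ for every $\cU(\ell)$-matroid $N'$ of rank $n$ whenever $n \le n_0$ for a suitable $n_0 = n_0(\ell,\alpha)$ (using $g(n) \ge 2^{n-1}\alpha$ versus Kung's bound $\ell^{n}$—note $2 \le \ell$, so one actually needs $g(1)=\alpha$ large, or more care: one should use $g(n)\ge 2^{n-1}\alpha$ against $\tfrac{\ell^n-1}{\ell-1}$, which requires comparing $2^{n-1}\alpha$ with $\ell^n$; this forces the threshold to depend on how $\alpha$ compares to powers of $\ell$). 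Setting $f_{\ref{weakroundnessreduction}}(\ell,\alpha,r) = r + 2 n_0(\ell,\alpha)$ then does the job: starting above this value, we cannot exhaust $n_0$-many "rank-$2$ drops" worth of iteration before hitting rank $r$, because a non-weakly-round matroid of rank $\le n_0$ with $\elem > g$ contradicts Kung. I would present the Fibonacci function and the comparison $g(n) > C f(n)$ as the clean way to package this, deferring the arithmetic to a short calculation.
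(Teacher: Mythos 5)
Your high-level plan --- use a Fibonacci-type comparison function $f$ to invoke Lemma~\ref{weakroundrestriction} once, then apply Kung's theorem to bound $r(N)$ from below --- is the right skeleton, and matches the paper's proof. But the proposal has a genuine gap in where $f$ is anchored. You take $f(n)=\alpha\phi^{n}$, normalised so that $f(1)$ is tied to $\alpha$. Lemma~\ref{weakroundrestriction} then hands you a weakly round $N$ with $\elem(N)>f(r(N))$, and you still need $\elem(N)>g(r(N))$. That requires $f\ge g$ at $n=r(N)$, but your own observation that $g(n)\ge 2^{n-1}\alpha$ grows strictly faster than $\alpha\phi^{n}$ shows the inequality runs the \emph{wrong way}: for all but small $n$ you get $g(n)>f(n)$ (and if $g$ grows much faster than $2^{n}$, the gap is worse). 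So the conclusion $\elem(N)>g(r(N))$ simply doesn't follow from your choice of $f$. The fix, which is what the paper does, is to anchor $f$ at the \emph{top} rather than the bottom: set $f(n)=\varphi^{n-r(M)}\elem(M)$. Then $f(r(M))=\elem(M)>g(r(M))$, and since $g$ halves (at least) with each unit drop in rank while $f$ only shrinks by a factor $\varphi<2$, one gets $f(n)\ge (2/\varphi)^{r(M)-n}g(n)\ge g(n)$ for all $1\le n\le r(M)$, which is exactly what is needed.

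The second step is also not quite honest as written. You propose to bound $r(N)$ by comparing $g$ directly to Kung's bound $\frac{\ell^{n}-1}{\ell-1}$ at small $n$, arguing that a non-weakly-round $\cU(\ell)$ matroid of small rank with $\elem>g$ is impossible. But the hypotheses only give $g(n)\ge 2^{n-1}\alpha$, and for $\ell>2$ this is \emph{smaller} than Kung's bound for all $n$ beyond a constant (and can be smaller for all $n\ge 1$), so no contradiction arises --- you acknowledge this tension but don't resolve it. The ``rank drops by at most $2$ per step, so many steps were performed'' bookkeeping doesn't help here; Lemma~\ref{weakroundrestriction} is applied once, and the number of internal recursive steps is irrelevant. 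The correct lower bound on $r(N)$ comes from $f$ anchored at the top: one has $\elem(N)>f(r(N))\ge f(1)=\varphi^{1-r(M)}\elem(M)>\tfrac12(2/\varphi)^{r(M)}\alpha=\tfrac12(\sqrt5-1)^{r(M)}\alpha$, which grows without bound in $r(M)$. Choosing $f_{\ref{weakroundnessreduction}}(\ell,\alpha,r)$ so that this exceeds $\frac{\ell^{r-1}-1}{\ell-1}$ and then invoking Theorem~\ref{kungdensity} gives $r(N)\ge r$. So the key missing idea is: normalise $f$ to $\elem(M)$ at $n=r(M)$, not to $\alpha$ at $n=1$; both the density transfer $f\ge g$ and the Kung rank bound hinge on this.
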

	\begin{proof}
		Set $f_{\ref{weakroundnessreduction}}(\ell,\alpha,r)$ to be large enough so that 
		\[ \alpha(\sqrt{5}-1)^{f_{\ref{weakroundnessreduction}}(\ell,\alpha,r)} \ge 2\frac{\ell^{r-1}-1}{\ell-1}. \]
		Let $\varphi = \frac{1}{2}(\sqrt{5}+1)$. Define a real-valued function $f$ by $f(n) = \varphi^{n-r(M)}\elem(M)$ for all integers $n$. We clearly have $f(n) = f(n-1) + f(n-2)$ for all $n$, and $0 < f(1) \le f(2)$. Also, it is clear that all $n \le r(M)$ satisfy $g(n) \ge 2^{n-1}g(1)$, and $g(n) \le 2^{n-r(M)}g(r(M)) < \varphi^{n-r(M)}\elem(M) = f(n)$. 
		
		By Lemma~\ref{weakroundrestriction}, there is a weakly round restriction $N$ of $M$ satisfying $\elem(N) > f(r(N))$ and $r(N) \ge 1$. We have $r(N) \le r(M)$, so $\elem(N) \ge f(r(N)) \ge g(r(N))$. Moreover, 
		\begin{align*}
			\elem(N) & \ge f(r(N)) \\
					 & >\varphi^{r(N)-r(M)}g(r(M)) \\
					 & \ge \varphi^{-r(M)}2^{r(M)-1}g(1) \\
					 & = \tfrac{1}{2}(2\varphi^{-1})^{r(M)}\alpha \\
					 &\ge \frac{\ell^{r-1}-1}{\ell-1}.
		\end{align*}
		By Lemma~\ref{kungdensity}, $r(N) \ge r$, implying the result.
	\end{proof}

\section{Exploiting Weak Roundness}

Our main result of this section is a technical lemma that uses the assumption of weak roundness to contract a set of bounded size onto a large projective geometry. This lemma contains most of the machinery in the proof of the main theorem of [\ref{gn}], and we have formulated it here in a more general sense than is required, to emphasise that $M^+$ need not be representable for the lemma to hold. The case where $M = M^+$ is an important specialisation.

\begin{lemma}\label{contractrestriction}
	There is an integer-valued function $f_{\ref{contractrestriction}}(n,q,t,\ell)$ so that the following holds: if $q$ is a prime power, $t \ge 0$, $n \ge 1$, and $\ell \ge 2$ are integers and matroids $M^+ \in \cU(\ell)$, $M$,  and a set $B \subseteq E(M^+)$, satisfy
	\begin{itemize}
	\item\label{cri} 
		$r_{M^+}(B) \le t$, and
	\item\label{crii} 
		$M$ is a weakly round, spanning restriction of $M^+$, and 
	\item\label{criii} 
		$M$ has a PG$(f_{\ref{contractrestriction}}(n,q,t,\ell)-1,q)$-minor $N$, 	 
	\end{itemize}
then there is a set $X \subseteq E(M)$, disjoint from $B$, so that $r(M \con X) \ge n$, and $M \con X$ has a $\PG(r(M \con X)-1,q)$-restriction, and $(M^+ \con X)|B = M^+|B$. 
\end{lemma}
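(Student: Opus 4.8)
The plan is to build the set $X$ one element at a time, maintaining a large projective-geometry restriction, and using weak roundness to ensure each contraction step decreases the rank by exactly one while "preserving" $M^+|B$. First I would set $f_{\ref{contractrestriction}}(n,q,t,\ell)$ to be large enough that, after peeling off a bounded amount of rank for $B$, we still have an enormous projective-geometry minor: roughly $f_{\ref{contractrestriction}}(n,q,t,\ell) = n + t + (\text{slack})$, with the slack chosen so the process below can be iterated enough times. The key local step: suppose $M$ is weakly round and spanning in $M^+$, $r_{M^+}(B) \le t$, and $M$ has a $\PG(m-1,q)$-minor $N$ with $m$ large. Since $M$ is weakly round and $N = M \con C \del D$ for some $C$, I would first replace $N$ by a spanning restriction of $M \con C$ (weak roundness of $M$ passes to $M \con C$ after simplification, and a weakly round matroid with a large projective-geometry minor has a large projective-geometry \emph{restriction} once we contract the appropriate set — this is where Corollary~\ref{framenice}/\ref{pgframe} style reasoning plus the definition of weak roundness is used to find an actual $\PG$-restriction $R$ of $M \con X_0$ of rank close to $m$, with $X_0$ disjoint from $B$ and $r(M \con X_0)$ only slightly less than $r(M)$).

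Next I would iterate: given the current matroid $M' = M \con X_0$, weakly round and spanning in $M^{+\prime} = M^+ \con X_0$, with a spanning $\PG(m'-1,q)$-restriction $R$, I want to contract a single element $x$ with $x \notin \cl_{M^{+\prime}}(B)$ such that $M' \con x$ is still weakly round and still has a large spanning $\PG$-restriction. Weak roundness of $M'$ guarantees that for any element $x$, the simplification of $M' \con x$ is again weakly round; the issue is (i) keeping a big projective geometry and (ii) avoiding $\cl(B)$. For (ii), note $r_{M^{+\prime}}(B) \le t$, so $\cl_{M^{+\prime}}(B)$ meets $R$ in a flat of rank at most $t$; as long as $r(R) > t$ there are points of $R$ outside this flat, and contracting such a point $x \in E(R) \setminus \cl(B)$ both drops the rank by one and leaves $R \con x$ containing a $\PG(r(R)-2,q)$-restriction (contracting a point of a projective geometry yields, after simplification, a smaller projective geometry). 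Crucially, since $x \notin \cl_{M^{+\prime}}(B)$ and $x$ is a single element, $r_{M^{+\prime} \con x}(B) = r_{M^{+\prime}}(B)$ and, more importantly, $(M^{+\prime} \con x)|B$: here I need that contracting an element outside $\cl(B)$ does not change the restriction to $B$ — indeed a circuit of $M^{+\prime}|B$ is a circuit of $M^{+\prime} \con x$ and conversely any dependency created would force $x \in \cl(B)$, so $(M^{+\prime} \con x)|B = M^{+\prime}|B$. Composing over all the contracted elements gives $(M^+ \con X)|B = M^+|B$.

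Repeating this single-element step until the rank of the current matroid is exactly $n$ (which we can do, since we started with rank $\ge n + t + \text{slack}$ and each step costs one unit of rank while we only need to stay above $t$ to keep finding points of $R$ outside $\cl(B)$, and above $n$ to stop) yields the desired $X$: we get $r(M \con X) \ge n$, a $\PG(r(M \con X)-1,q)$-restriction survives at the end, $X \cap B = \varnothing$, and $(M^+ \con X)|B = M^+|B$. I expect the main obstacle to be the first reduction — extracting an honest spanning $\PG$-\emph{restriction} (not just a minor) of a weakly round matroid from a large $\PG$-minor while keeping the auxiliary contraction set disjoint from $B$ and controlling how much rank is lost; this is the part that genuinely uses weak roundness (a non-weakly-round matroid can have a huge $\PG$-minor but only rank-$1$ $\PG$-restrictions, as the binary affine geometry example in Section~\ref{weakroundrestriction} shows), and it likely requires an inductive "either we already have a big $\PG$-restriction, or we can contract one more element to boost connectivity/density" argument of the sort underlying Lemma~\ref{weakroundnessreduction}.
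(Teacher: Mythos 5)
Your overall instinct — obtain a $\PG$-restriction in a contraction, then use weak roundness to push it up to spanning while dodging $\cl(B)$ — matches the skeleton of the paper's argument, but two of the load-bearing steps are wrong.

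First, the iterative step as you describe it cannot work. You propose contracting a point $x \in E(R) \setminus \cl(B)$ of the current $\PG$-restriction $R$. Such a contraction drops the ambient rank by one but also drops $r(R)$ by one; the deficit $r(M \con X) - r(R)$ is unchanged. So if $R$ was not spanning to begin with it never becomes spanning, and if $R$ was already spanning there was no reason to iterate at all (you only need $r(M\con X)\ge n$, not equality). The paper does the opposite: it fixes a $\PG$-restriction $N'$ and contracts elements $f$ with $f\notin\cl_{M\con X}(E(N'))$ and $f\notin\cl_{M^+\con X}(B)$, which keeps $N'$ intact while strictly decreasing the ambient rank. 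Weak roundness is what guarantees that such an $f$ exists whenever $N'$ is not yet spanning, because $E(M\con X)$ cannot be covered by $\cl(E(N'))$ (corank $\ge 1$) together with $\cl(B)$ (corank $\ge 2$, since $t < n' \le r(N') < r(M\con X)$). Taking $X$ maximal makes this a one-step maximality argument rather than an explicit loop.

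Second, the step you flag as the "main obstacle" — turning a $\PG$-minor of $M$ into a $\PG$-restriction of some contraction $M\con C'$, disjoint from $B$ — is indeed where all the work is, but the tools you name (Corollary~\ref{framenice}/Lemma~\ref{pgframe}) are irrelevant here; they concern $\GF(q^2)$-representable matroids that already have a spanning $\PG(\cdot,q)$-restriction, and $M^+$ need not even be representable. What the paper actually uses is a density argument: write $N = M\con C\del D$, choose $C_0\subseteq C$ maximal with $(M^+\con C_0)|B = M^+|B$ (so $C-C_0\subseteq\cl_{M^+\con C_0}(B)$ has rank $\le t$), show via Kung's bound (Theorem~\ref{kungdensity}) and the size of $\PG(m-1,q)$ that $A = E(N)-B$ is very dense in $M\con C_0$, apply Lemma~\ref{skewsubset} to extract a still-dense $A'\subseteq A$ skew to $B$, and then invoke Theorem~\ref{densitygk} on $M\con C_0|A'$ to produce a $\PG(n'-1,q')$-minor with $q'>q-\tfrac12$. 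Skewness to $B$ (hence to $C-C_0$) forces this minor to live inside the original $\GF(q)$-geometry, giving $q'=q$ and making it an honest $\PG$-restriction of $M\con(C_0\cup C_1)$ with the contraction set disjoint from $B$. Without some quantitative input of this kind — Kung, the skew-subset lemma, and Geelen--Kabell — there is no way to guarantee the $\PG$-restriction you need; weak roundness alone only gives you the maximality/spanning step at the end.
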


\begin{proof}
	Let $n' = \max(n,t+1)$. Let $N = M \con C \del D$, where $C$ is independent in $M$. Let $\alpha = f_{\ref{densitygk}}(n',q-\tfrac{1}{2},\ell)$. Let $m$ be a positive integer large enough so that $m \ge 2t$, and so that \[\left(\frac{q}{q-\tfrac{1}{2}}\right)^m \ge \alpha q^2\left(\frac{\ell(q-\frac{1}{2})}{q-\frac{3}{2}}\right)^t,\] and \[\frac{q^m-1}{q-1} > q^{m-2} + \frac{\ell^t-1}{\ell-1}.\]Set $f_{\ref{contractrestriction}}(n,q,t,\ell) = m$. We may assume that $N \cong \PG(m-1,q)$. 
	
	\begin{claim}
	There is a set $C' \subseteq E(M)$, so that $M \con C'$ has a PG$(n'-1,q)$-restriction $N'$, and $(M^+ \con C')|B = M^+|B$.
	\end{claim}
	 \begin{proof}[Proof of claim:] Let $C_0 \subseteq C$ be maximal so that $(M^+ \con C_0)|B = M^+|B$, and let $M_0 = M \con C_0$, and $M^+_0 = M^+ \con C_0$. By maximality of $C_0$, we have $C - C_0 \subseteq \cl_{M^+_0}(B)$, and therefore $r_{M_0}(C-C_0) \le t$.  
	
	Let $A = E(N) - B$. We have \[r_{M'}(A) \le r_{M'}(E(N)) = r_{M' \con (C-C_0)}(E(N)) + r_{M'}(C-C_0) \le m+t.\] Since $M^+_0 \in \cU(\ell)$ and $r_{M^+_0}(B) \le t$, Theorem~\ref{kungdensity} gives $\elem_{M^+_0}(B) \le \frac{\ell^t-1}{\ell-1}$. Now
	\begin{align*}
		\elem_{M_0}(A) &\ge \elem_{M_0}(E(N)) - \elem_{M^+_0}(B) \\
				  &\ge \frac{q^m-1}{q-1} - \frac{\ell^t-1}{\ell-1}\\
				  & > q^{m-2}\\
				  & \ge \alpha \ell^t (q-\tfrac{3}{2})^{-t}(q - \tfrac{1}{2})^{m+t}\\
				  & \ge \alpha(\ell(q-\tfrac{3}{2})^{-1})^{t}(q - \tfrac{1}{2})^{r_{M_0}(A)}.
	\end{align*}  
	Applying Lemma~\ref{skewsubset} to $A$ and $B$ gives a set $A' \subseteq A$, skew to $B$ in $M^+_0$, satisfying $\elem_{M^+_0}(A') > \alpha(q-\tfrac{1}{2})^{r_{M^+_0}(A')}$. By Theorem~\ref{densitygk}, the matroid $M^+_0 |A' = M_0|A'$ has a PG$(n'-1,q')$-minor $N_1 = (M_0|A') \con C_1 \del D_1$ for some $q' > q-\tfrac{1}{2}$. 
	
	Since $A'$ is skew to $B$ in $M_0^+$, it is also skew to $C-C_0$, so $M_0|A' = (M_0 \con (C-C_0))|A' = N_1|A'$, and therefore $M|A'$ is GF$(q)$-representable, and so is $N_1$. So $q' = q$, and $N_1$ is a PG$(n'-1,q)$-restriction of $M_0 \con C_1$. Moreover, $C_1 \subseteq A'$, so $C_1$ is skew to $B$ in $M_0^+$, so $(M_0^+ \con C_1)|B = M_0^+|B = M^+|B$. Therefore, $C' = C_0 \cup C_1$ satisfies the claim. 
	\end{proof}
	
	Let $X$ be a maximal set satisfying the following:
	\begin{itemize}
		 \item $C' \subseteq X \subseteq E(M)-B$, and 
		 \item $(M^+ \con X)|B = M^+|B$, and 
		 \item $N'$ is a restriction of $M \con X$.
	\end{itemize}
	If $N'$ is spanning in $M \con X$, then $X$ satisfies the lemma. Otherwise, we have $r_{M^+}(B) \le t < n'= r(N') < r(M \con X)$. Weak roundness of $M \con X$ thus gives some $f \in E(M \con X)$ not in $\cl_{M \con X}(E(N'))$ or $\cl_{M^+ \con X}(B)$. This contradicts maximality of $X$.
\end{proof}

\section{The Spanning Case}\label{spanningsection}

	In this section, we show how to construct a $\PG^{(k+1)}$-minor directly from density in the case that we have a dense $\GF(q)$-represented restriction that is spanning and weakly round. 

	\begin{lemma}\label{spanningwin}
		There is an integer-valued function $f_{\ref{spanningwin}}(n,q,k)$ so that the following holds: if $q$ is a prime power, $n$ and $k $ are integers with $0 \le k < n$, and $M$ is a GF$(q^2)$-representable matroid such that 
		\begin{itemize}
			\item $M$ has a weakly round, spanning $\GF(q)$-represented restriction $R$, and 
			\item $R$ has a PG$(f_{\ref{spanningwin}}(n,q,k)-1,q)$-minor, and
			\item $\elem(M) > |\PG^{(k)}(r(M)-1,q)|$,
		\end{itemize}
		then $M$ has an $\PG^{(k+1)}(n-1,q)$-minor.
	\end{lemma}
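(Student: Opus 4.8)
The plan is to combine the machinery already developed: Lemma~\ref{findunstable} finds an $R$-unstable set, and Lemma~\ref{contracttoepg} converts an $R$-unstable set in a spanning $\PG(N-1,q)$-restriction into the desired $\PG^{(k+1)}$-minor, provided the rank is large enough and provided we can first find such a spanning projective geometry of sufficiently high rank. So the engine is: either we directly find an $R$-unstable set of size $k+1$ (and win via Lemma~\ref{contracttoepg}), or Lemma~\ref{findunstable} gives us a flat $C$ of rank $\le k$ with $\epsilon(M\con C) \le \epsilon(R\con C) + f_{\ref{findunstable}}(q,k)$, and we must derive a contradiction from the density hypothesis $\epsilon(M) > |\PG^{(k)}(r(M)-1,q)|$.

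First I would set $f_{\ref{spanningwin}}(n,q,k)$ large enough to feed into Lemma~\ref{contracttoepg} (we need the ambient rank to exceed $n+(k+1)$ after any bounded-size contraction) and into Lemma~\ref{findunstable}. Apply Lemma~\ref{findunstable} to $M$ and $R$. In the first outcome we get an $R$-unstable set $X$ of size $k+1$; since $R$ is spanning and $r(M) = r(R) \ge f_{\ref{spanningwin}}(n,q,k) \ge n + (k+1)$, Lemma~\ref{contracttoepg} (with the roles $n' = r(M)$, the projection size $k+1$, target rank $n$) yields a $\PG^{(k+1)}(n-1,q)$-minor, and we are done. The substance is in ruling out the second outcome.

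In the second outcome we have $C \subseteq E(R)$ with $r_M(C) = j \le k$ and $\epsilon(M\con C) \le \epsilon(R\con C) + f_{\ref{findunstable}}(q,k)$. Here $R\con C$ is a $\PG(r(M)-1-j,q)$-restriction of the $\GF(q^2)$-representable matroid $M\con C$, so by Corollary~\ref{framenice} it is $\GF(q)$-represented; hence $\epsilon(R\con C) = \frac{q^{r(M)-j}-1}{q-1}$. The key numerical point is that when $j \le k$, the right-hand side $\frac{q^{r(M)-j}-1}{q-1} + f_{\ref{findunstable}}(q,k)$ is, for $r(M)$ large, strictly smaller than $|\PG^{(k)}(r(M)-1,q)| = \frac{q^{r(M)+k}-1}{q-1} - q\frac{q^{2k}-1}{q^2-1}$ — the gap between the two leading terms is of order $q^{r(M)}(q^k - q^{-j}) \ge q^{r(M)}(q^k-1)$, which swamps the additive constant $f_{\ref{findunstable}}(q,k)$ once $r(M)$ is past a threshold depending only on $q$ and $k$. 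Folding that threshold into the definition of $f_{\ref{spanningwin}}(n,q,k)$, the second outcome gives $\epsilon(M) = \epsilon(M\con C) + (\text{something} \le \epsilon(M) - \epsilon(M\con C)) $; more carefully, $\epsilon(M) \le \epsilon(M \con C) + \epsilon(M|\cl_M(C))$, and using Kung's bound $\epsilon(M|\cl_M(C)) \le \frac{(q^2)^{j}-1}{q^2-1}$ for the $\GF(q^2)$-representable flat of rank $j \le k$, we get $\epsilon(M) \le \frac{q^{r(M)-j}-1}{q-1} + f_{\ref{findunstable}}(q,k) + \frac{q^{2k}-1}{q^2-1} < |\PG^{(k)}(r(M)-1,q)|$, contradicting the density hypothesis. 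Hence the second outcome cannot occur, and the proof is complete.

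The main obstacle I expect is the bookkeeping in the last inequality: one must be sure that the ``$+ f_{\ref{findunstable}}(q,k)$'' error term from Lemma~\ref{findunstable} and the contribution of the rank-$\le k$ flat through $C$ are genuinely dominated by the $q^k$-versus-$q^{-j}$ discrepancy in the leading terms, uniformly in $r(M)$, and to verify that the threshold on $r(M)$ this forces depends only on $q$ and $k$ (not on $n$), so it can legitimately be absorbed into $f_{\ref{spanningwin}}(n,q,k)$ alongside the larger requirement $r(M) \ge n+k+1$ needed by Lemma~\ref{contracttoepg}. A secondary point of care is confirming that $C$, being a subset of $E(R)$ of rank $\le k$, interacts with $\cl_M(C)$ as claimed — i.e.\ that $\epsilon(M) \le \epsilon(M\con C) + \epsilon(M|\cl_M(C))$ and that $M|\cl_M(C)$ really has rank exactly $r_M(C) \le k$ so Kung's bound applies with the right exponent.
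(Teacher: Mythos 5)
Your plan has two genuine gaps, and the second one is fatal.

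First, you apply Lemma~\ref{findunstable} directly to the pair $(M,R)$, but that lemma requires $R$ to be a full $\PG(r(M)-1,q)$-restriction of $M$. In the present lemma $R$ is only assumed to be a $\GF(q)$-represented, weakly round, spanning restriction that \emph{has} a $\PG(f_{\ref{spanningwin}}(n,q,k)-1,q)$-\emph{minor}; $R$ itself need not be a projective geometry, and its rank $r(M)$ may be far larger than the rank of the $\PG$-minor it contains. The same objection applies to your first-outcome branch: Lemma~\ref{contracttoepg} needs a spanning $\PG$-restriction, not just a $\GF(q)$-represented one. The paper handles this by extending $R$ to a genuine $\PG(r(M)-1,q)$-restriction $R'$ of a larger matroid $M'$ (appending $\GF(q)$-columns) and then, crucially, invoking Lemma~\ref{contractrestriction} (with $B$ the union of the matching) to contract down to a situation where a spanning $\PG$ of manageable rank actually exists. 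None of that machinery appears in your argument.

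Second and more seriously, the inequality $\elem(M)\le \elem(M\con C)+\elem\bigl(M|\cl_M(C)\bigr)$ that you use to turn the second outcome of Lemma~\ref{findunstable} into a contradiction is false. Points of $M$ outside $\cl_M(C)$ map many-to-one onto points of $M\con C$; for example $M=U_{2,4}$ with $C$ a single point gives $\elem(M)=4$ while $\elem(M\con C)+\elem(M|\cl_M(C))=1+1=2$. The correct relationship for a $\GF(q^2)$-representable matroid carries a multiplicative factor of order $q^{2\,r_M(C)}$, and once you insert that factor the bound $\elem(M\con C)\le\elem(R\con C)+f_{\ref{findunstable}}(q,k)\approx \frac{q^{r(M)-j}}{q-1}$ only gives $\elem(M)\lesssim \frac{q^{r(M)+2k-j}}{q-1}$, which for $j<k$ \emph{exceeds} $|\PG^{(k)}(r(M)-1,q)|\approx \frac{q^{r(M)+k}}{q-1}$ and yields no contradiction at all. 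This is exactly why the paper does not use Lemma~\ref{findunstable} here: it instead applies Theorem~\ref{pgmatching} directly and performs the point-count in the uncontracted matroid $M'$, where the geometry of $\cL_F$ and $\cL_0$ controls all of $M'\setminus R'$ and the arithmetic closes. Lemma~\ref{findunstable} is the right tool only after one has already contracted onto a spanning $\PG$, and even then the paper reaches for it only in the constellation branch (Lemma~\ref{constellationwin}), where the bounded-size obstruction is played off against the constellation, not against the global density.
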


	\begin{proof}
	Let $s$ be an integer so that \[|\PG^{(k)}(s'-1,q)| > |\PG^{(j)}(s'-1,q)| + f_{\ref{pgmatching}}(q,k)\] for all $j < k$ and $s' \ge s$. Set \[f_{\ref{spanningwin}}(n,q,k) = \max(s,f_{\ref{contractrestriction}}(n+k,q,2k+2,q^2)).\]
	
	We may assume that $M$ is simple. Let $A$ be a $\GF(q^2)$-representation of $M$ with $r(M)$ rows, so that $A[E(R)]$ has all entries in $\GF(q)$. Let $A'$ be a matrix formed by extending $R$ to a $\PG(r(M)-1,q)$-restriction $R'$ by appending columns with entries in $\GF(q)$ to $A$. Let $M' = M(A')$; by construction, $M'$ is simple, and $M$ is a spanning restriction of $M'$. 
	
	Let $\cL$ be the set of lines of $R'$, and let $\cL^+ = \{L \in \cL: \cl_{M'}(L) - E(R') \ne \varnothing\}$. Note that $|\cl_{M'}(L)| > q+1$ for all $L \in \cL^+$. Our goal is to use $\cL^+$ to find an unstable set in a minor. 
	
	\begin{claim} $\cL^+$ contains a $(k+1)$-matching of $R'$.
	\end{claim}
	\begin{proof}[Proof of claim:]
		Suppose not. Let $F \subseteq E(R')$ and $\cL_0 \subseteq \cL^+$ be the sets defined in Theorem~\ref{pgmatching}. Let $j = r_M(F)$; we know that $0 \le j \le k$, and if $j = k$, then $\cL_0 = \varnothing$. By Lemma~\ref{pgframe}, we have $E(M') = \left(\bigcup_{L \in \cL^+} L\right) \cup E(R')$. Let $\cL_F = \{L \in \cL: |L \cap F| =1\}$. So each point in $M' \del R'$ is either in $\cl_{M'}(F)$, in a line in $\cL_F$, or in a line in $\cL_0$.
		
		 Since $F$ is modular in $R'$, each point of $E(R') - F$ lies on $|F|$ distinct lines in $\cL_F$, and each line in $\cL_F$ contains exactly $q$ points in $E(R') - F$, so 
		\[|\cL_F| = \frac{|F|(|E(R')|-|F|)}{q} = \frac{(q^{j}-1)(q^{r(M)}-q^{j})}{q(q-1)^2}.\]
		
		Each line of $R'$ contains $q+1$ points of $R'$, and its closure in $M'$ contains at most $q^2-q$ points of $M' \del R'$. We can now estimate $\elem(M')$. 	
		\begin{align*}
			\elem(M') &= |R'| +  |M' \del R'| \\
					 &\le |R'| + \sum_{L \in \cL_F \cup \cL_0}|L-E(R')| +  |\cl_{M'}(F)-E(R')|\\
					 &\le \frac{q^{r(M)}-1}{q-1} + (q^2-q)(|\cL_F| + |\cL_0|)+ \left(\frac{q^{2j}-1}{q^2-1} - \frac{q^{j}-1}{q-1}\right)\\
					 &\le \frac{(q^2-q)(q^{j}-1)(q^{r(M)}-q^{j})}{q(q-1)^2} + \frac{q^{r(M)}-q^j}{q-1} + \frac{q^{2j}-1}{q^2-1} \\ &+ (q^2-q)|\cL_0|\\
					 &= \frac{q^{r(M)+j}-1}{q-1} - q\left(\frac{q^{2j}-1}{q^2-1}\right) + (q^2-q)|\cL_0|.\\
					 &= |\PG^{(j)}(r(M)-1,q)| + (q^2-q)|\cL_0|
		\end{align*}
		If $j < k$, then by the fact that $r(M') = r(M) \ge f_{\ref{spanningwin}}(n,q,k) \ge s$, we have $\elem(M') \le |\PG^{(k)}(r(M)-1,q)|$. If $j = k$, then $\cL_0 = \varnothing$, so $\elem(M') \le |\PG^{(k)}(r(M)-1,q)|$. In either case, \[\elem(M') \le |\PG^{(k)}(r(M)-1,q)| < \elem(M),\] contradicting the fact that $M$ is a restriction of $M'$. 
	\end{proof}
	Let $\{L_1, \dotsc, L_{k+1}\} \subseteq \cL^+$ be a $(k+1)$-matching, and let $B = \bigcup_{i=1}^{k+1}L_i$. We have $r_{M'}(B) = 2k+2$. The matroid $R$ is a weakly round, spanning restriction of $M'$, and $R$ has a $\PG(f_{\ref{contractrestriction}}(n+k,q,2k+2,q^2)-1,q)$-minor, so by Lemma~\ref{contractrestriction}, there is a set $X \subseteq E(R)$ so that $r(R \con X) \ge n+k$, and $R\con X$ has a $\PG(r(M \con X)-1,q)$-restriction $R_0$, and $(M' \con X)|B = M'|B$. 
	\begin{claim}
		$\si(M' \con X) \cong \si(M \con X)$.
	\end{claim}
	\begin{proof}[Proof of claim:]
		All entries of $A'[E(R')]$ are in GF$(q)$. In particular, the entries of $A'[X]$ are in GF$(q)$, so there is a $GF(q^2)$-representation $A_0$ of $M' \con X$ such that $A_0[E(R')-X]$ only has entries in GF$(q)$. 
		
		But $E(R_0) \subseteq E(R')-X$, and $R_0$ is a $\GF(q)$-represented $\PG(r(R \con X)-1,q)$-restriction of $R \con X$, so every column of $A_0$ with entries only in GF$(q)$ is parallel in $A_0$ to some element of $R_0$. All elements of $E(R')$ have this property, and $E(M') = E(M) \cup E(R')$, so the claim follows. 
	\end{proof}
	
	\begin{claim} There is an $R_0$-unstable set of size $k+1$ in $M' \con X$. 
	\end{claim}
	\begin{proof}[Proof of claim:]
		For each $1 \le i \le k+1$, let $L_i' = \cl_{M' \con X}(L_i)$. Since $(M' \con X)|B = M'|B$, the set $\{L_1', \dotsc, L_{k+1}'\}$ is a $(k+1)$-matching of $M' \con X$. Moreover, each $L_i$ is spanned by a pair of points of $R'$, and each such point is parallel in $M' \con X$ to a point of $R_0$, so for each $i$, the set $L_i' \cap E(R_0)$ is a line of $R_0$. Finally, $\elem(M' \con X | L_i') \ge \elem(M' | \cl_{M'}(L_i)) > q+1$ for each $i$, so each $L_i'$ contains a point $e_i$ not parallel to any points of $R_0$. The set $\{e_1, \dotsc, e_{k+1}\}$ is $R_0$-unstable in $M' \con X$. 
	\end{proof}
	
	By Lemma~\ref{contracttoepg}, the matroid $M' \con X$ has a $\PG^{(k+1)}(n-1,q,k+1)$-minor; by the second claim, so does $M \con X$.  
	\end{proof}
\section{Constellations}\label{constellationsection}

	If the hypotheses in the previous section fail, then we use a different method to find a $\PG^{(k)}(n,q)$-minor. 
	\begin{definition}\label{defconstellation}
		Let $s,\ell,j$ be positive integers. A matroid $K$ is an $(s,\ell,j)$-\textit{constellation} if 
		\begin{itemize}
			\item $r(K) \le s(j+1)$, and 
			\item $K$ has an independent set $S$ of size $s$ such that, for all $e \in S$, there exists an independent set $X_e$ of size $j$, such that, for all $f \in X_e$, the line $\cl_K(\{e,f\})$ contains at least $\ell+2$ points.
		\end{itemize}
	\end{definition}
	
	A constellation is an independent set of points, each of which is the centre of a `star' of an independent collection of $(\ell+2)$-point lines. If $K$ is any matroid satisfying the second part of the definition, then $K|\left(S \cup \bigcup_{e \in S}X_e\right)$ is an $(s,\ell,j)$-constellation. Moreover, for any $s' \le s$, an $(s,\ell,j)$-constellation has an $(s',\ell,j)$-constellation restriction, found by considering an $s'$-subset of $S$. 
	
	
		
	
	\begin{lemma}\label{constellationwin}
		There is an integer-valued function $f_{\ref{constellationwin}}(n,q,k)$ so that the following holds: if $q$ is a prime power, $n$ and $k $ are integers with $0 \le k < n$, and $M$ is a weakly round, GF($q^2$)-representable matroid with a $(f_{\ref{constellationwin}}(n,q,k),q,k+1)$-constellation restriction $K$, and a PG$(f_{\ref{constellationwin}}(n,q,k)-1,q)$-minor, then $M$ has a $\PG^{(k+1)}(n-1,q)$-minor.
	\end{lemma}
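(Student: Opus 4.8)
The plan is to use the large $\PG(\cdot,q)$-minor, weak roundness, and Lemma~\ref{contractrestriction} to pass to a minor $M_1$ of $M$ that has a spanning $\GF(q)$-represented projective geometry restriction $R$ and still contains enough of the constellation, then to extract from the constellation an $R$-unstable set of size $k+1$ in $M_1$, and finally to apply Lemma~\ref{contracttoepg}.

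First I would fix a constant $s'$ depending only on $n$, $q$, $k$ (its exact value dictated by the matching argument below) and set $f_{\ref{constellationwin}}(n,q,k) = \max\bigl(s',\, f_{\ref{contractrestriction}}(n+k+1,\,q,\,s'(k+2),\,q^2)\bigr)$. Since an $(s,q,k+1)$-constellation has an $(s',q,k+1)$-constellation restriction for every $s' \le s$, we may assume $K$ has exactly $s'$ centres; then $B := E(K)$ satisfies $r_M(B) \le r(K) \le s'(k+2)$. The matroid $M$ lies in $\cU(q^2)$, is weakly round, and is a spanning restriction of itself, and it has a $\PG\bigl(f_{\ref{contractrestriction}}(n+k+1,q,s'(k+2),q^2)-1,q\bigr)$-minor; so Lemma~\ref{contractrestriction}, applied with $M^+=M$ and this choice of $B$, yields a set $X \subseteq E(M)-B$ with $r(M \con X) \ge n+k+1$ such that $M_1 := M \con X$ has a spanning $\PG(r(M_1)-1,q)$-restriction $R$ and $M_1|B = M|B$. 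Thus $K$ is a restriction of $M_1$, the matroid $M_1$ is still weakly round (weak roundness is closed under contraction), and $R$ is $\GF(q)$-represented by Corollary~\ref{framenice}.

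The heart of the proof is then to find an $R$-unstable set of size $k+1$ in $M_1$. The key observation is that any line $L$ of $M_1$ with more than $q+1$ points meets $E(R)$ in a flat of $R$ of rank at most $2$, hence in at most $q+1$ points, and therefore contains a point that is not parallel to any point of $R$ -- a ``new'' point -- which by Lemma~\ref{pgframe} lies on a unique line of $R$. Applying this to the long lines of $K$, each of the $s'$ centres produces $k+1$ new points lying on $k+1$ lines of $R$. I would then argue that, because the $s'$ centres are independent in $M_1$ while $s'$ is large and the configuration of a single such bundle relative to $R$ falls into only boundedly many types (bounded in terms of $q$ and $k$), one can select new points, coming from suitably chosen stars, whose $k+1$ associated lines of $R$ are pairwise skew -- using weak roundness of $M_1$, which survives the bounded-rank contractions performed during the selection, to keep both the rank and enough of the constellation available throughout. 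These $k+1$ new points form an $R$-unstable set of size $k+1$, and since $r(M_1) \ge n+k+1$, Lemma~\ref{contracttoepg} then gives a $\PG^{(k+1)}(n-1,q)$-minor of $M_1$, and hence of $M$.

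I expect the previous paragraph -- producing a $(k+1)$-matching of long lines of $R$ out of the constellation -- to be the main obstacle. The subtlety is that the $k+1$ long lines in a single centre's star all pass through that centre and so are never mutually skew, so the argument must genuinely combine lines from different stars (or exploit centres that are themselves new points over $R$) and must control the limited ways in which the lines of $R$ determined by two new points can coincide or fail to be skew; this is precisely where one needs the hypothesis that the constellation has $f_{\ref{constellationwin}}(n,q,k)$ centres, a number chosen far larger than $k+1$, rather than merely $k+1$ of them.
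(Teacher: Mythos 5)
Your setup matches the paper's: fix a number $s'$ of centres, set $f_{\ref{constellationwin}}(n,q,k) = \max(s', f_{\ref{contractrestriction}}(\cdot,q,s'(k+2),q^2))$, apply Lemma~\ref{contractrestriction} with $M^+ = M$ and $B = E(K)$ to reach a contraction-minor $M_1$ with a spanning $\GF(q)$-represented projective geometry $R$ and with $K$ intact, and then try to produce an $R$-unstable set of size $k+1$ to feed into Lemma~\ref{contracttoepg}. All of that is correct and is exactly what the paper does. But the paragraph you yourself flag as ``the main obstacle'' is where the proposal stops being a proof, and the sketch you offer there would not close the gap.

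The difficulty is that weak roundness and independence of the $s'$ centres, together with a vague ``boundedly many configuration types'' pigeonhole, do \emph{not} by themselves yield a $(k+1)$-matching among the associated lines of $R$. The obstruction you have to rule out is precisely the one anticipated by the second outcome of Theorem~\ref{pgmatching}: it is possible for every long line, across every star, to have its associated line of $R$ meet a single flat $F$ of rank at most $k$, in which case no large matching exists no matter how many centres you have and no matter how cleverly you ``select new points from suitably chosen stars.'' Weak roundness does nothing here; the obstruction lives entirely inside the geometry $R$. The paper's proof handles this via Lemma~\ref{findunstable}: either you get the unstable set outright, or you get a flat $C$ of rank at most $k$ with $\elem(M_1\con C) \le \elem(R\con C) + d$ where $d = f_{\ref{findunstable}}(q,k)$. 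The constellation is then used not to build a matching but to \emph{contradict this density bound}: with $s' = d(d+1)+k+1$ one finds $d(d+1)+1$ centres surviving as independent points of $M_1\con C$, each contributing a long line of $M_1\con C$ meeting $E(R)$ in a line of $R\con C$; at most two centres share any such line, so there are more than $\binom{d+1}{2}$ long lines, and Lemma~\ref{finddistinctpoints} then forces more than $d$ extra points in $M_1\con C$ beyond $R\con C$. This counting contradiction, not a matching argument, is the engine. Your proposal has the scaffolding right but is missing this idea, and your chosen $s'$ is left undetermined precisely because the argument that would determine it (the $d(d+1)+1$ versus $\binom{d+1}{2}$ count) is absent.
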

	\begin{proof}
		Let $d = f_{\ref{findunstable}}(q,k)$, and let $s = d(d+1)+k+1$. Set \[f_{\ref{constellationwin}}(n,q,k) = \max(s,f_{\ref{contractrestriction}}(n+k,q,s(k+2),q^2)).\] 
		
		
		Since $M$ is GF$(q^2)$-representable, we know that $M \in \cU(q^2)$. By Lemma~\ref{contractrestriction}, applied with $M^+ = M$, and $B = E(K)$, there is some set $X \subseteq E(M)$ so that $r(M \con X) \ge n+k$, and $M \con X$ has a $\PG(r(M \con X)-1,q)$-restriction $R$, and $(M \con X)|E(K) = M | E(K) = K$. Let $M' = M \con X$.
		
		\begin{claim} $M'$ has an $R$-unstable set of size $k+1$.
		\end{claim}
		\begin{proof}[Proof of Claim:]
			By Lemma~\ref{findunstable}, we may assume that there is a set $C \subseteq E(R)$ so that $r_{M'}(C) \le k$, and $\elem(M' \con C) \le \elem(R \con C) + d$. The set $S$ in the constellation $K$ is a rank-$(d(d+1)+k+1)$ set in $M'$; let $S' \subseteq S$ be an independent set of size $d(d+1)+1$ in $M' \con C$. Let $e \in S'$. Since $r_{M'}(X_e) > k$, there is some $f \in X_e$ so that $\{e,f\}$ is independent in $M' \con C$; let $L_e = \cl_{M' \con C}(\{e,f\})$. The line $L_e$ contains at least $q+2$ points in $K$, and therefore in $M' \con C$. 
			
		$S'$ is independent in $M' \con C$, so no line $L_e$ can contain more than two points of $S'$, giving $|\{L_e: e \in S'\}| \ge \frac{1}{2}|S'| > \binom{d+1}{2}$. The matroid $R \con C$ is a spanning restriction of $M' \con C$, and $\si(R \con C) \cong \PG(n-1-r_{M'}(C),q)$, so Lemma~\ref{finddistinctpoints} now implies that $\elem(M' \con C) > \elem(R \con C) + d$, a contradiction. 
		\end{proof}
		The lemma now follows from Lemma~\ref{contracttoepg}.
		
	\end{proof}

	\section{The Reductions}
	
	We will prove Theorem~\ref{getepg} by showing that it can be reduced to either Lemma~\ref{spanningwin} or Lemma~\ref{constellationwin}. The following technical lemma contains this reduction. 
	
	\begin{lemma}\label{mainreduction}
		There is an integer-valued function $f_{\ref{mainreduction}}(m,q,k)$ satisfying the following: if $q$ is a prime power, $m \ge 1$ and $k \ge 0$ are integers, and $M$ is a weakly round, GF$(q^2)$-representable matroid such that 
		\begin{itemize}
			\item $M$ has a PG$(f_{\ref{mainreduction}}(m,q,k)-1,q)$-minor, and 
			\item $\elem(M) > |\PG^{(k)}(r(M)-1,q)|$, 
		\end{itemize}
		then one of the following holds:
		\begin{enumerate}[(i)]
			\item\label{spanningoutcome}
				$M$ has a minor $M'$ such that 
				\begin{itemize}
					\item $M'$ has a weakly round, spanning $\GF(q)$-represented restriction $R$, and
					\item $R$ has a PG$(m-1,q)$-minor, and 
					\item $\elem(M') > |\PG^{(k)}(r(M')-1,q)|$,
				 
				\end{itemize}
				
				or
			\item\label{constellationoutcome}
				$M$ has a weakly round minor $M'$ with an $(m,q,k+1)$-constellation restriction, and a PG$(m-1,q)$-minor. 
		\end{enumerate}
	\end{lemma}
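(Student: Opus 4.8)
\emph{Strategy.} I would split into two cases according to whether $M$ has an $(m,q,k+1)$-constellation restriction. In the constellation case, outcome~(\ref{constellationoutcome}) falls out of Lemma~\ref{contractrestriction}; in the constellation-free case, I would produce outcome~(\ref{spanningoutcome}) by showing that a dense, constellation-free $\GF(q^2)$-representable matroid with a large projective-geometry minor already has, in a dense minor, a spanning $\GF(q)$-represented projective restriction. Set $f_{\ref{mainreduction}}(m,q,k)$ to be an integer at least $f_{\ref{contractrestriction}}(m,q,m(k+2),q^2)$ and large enough for the estimates below; note that $M\in\cU(q^2)$.

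\emph{Constellation case.} Suppose $M$ has an $(m,q,k+1)$-constellation restriction $K$. By Definition~\ref{defconstellation}, $r_M(E(K))\le m(k+2)$, a bound independent of $r(M)$. I would apply Lemma~\ref{contractrestriction} with $M^+=M$, $B=E(K)$, $t=m(k+2)$, $\ell=q^2$ and $n=m$: since $M$ is a weakly round spanning restriction of itself and has a $\PG(f_{\ref{contractrestriction}}(m,q,m(k+2),q^2)-1,q)$-minor, this yields a set $X$ disjoint from $E(K)$ with $r(M\con X)\ge m$, with $M\con X$ having a $\PG(r(M\con X)-1,q)$-restriction, and with $(M\con X)|E(K)=M|E(K)=K$. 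Then $M':=M\con X$ is weakly round (contraction preserves weak roundness), has the $(m,q,k+1)$-constellation restriction $K$, and has a $\PG(m-1,q)$-minor inside its spanning projective restriction: this is outcome~(\ref{constellationoutcome}).

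\emph{Constellation-free case.} Now suppose $M$ has no $(m,q,k+1)$-constellation restriction. Call a point $e$ of a matroid \emph{heavy} if there is an independent set of $k+1$ elements $f$ for which $\cl(\{e,f\})$ has more than $q+1$ points; because $M$ is $\GF(q^2)$-representable such lines carry at most $q^2+1$ points, so a non-heavy point lies on at most $\tfrac{q^{2k}-1}{q^2-1}$ long lines, and the usual constellation construction shows that in $M$ the heavy points span a flat of rank less than $m$. If $\elem(M)$ is large enough that Theorem~\ref{densitygk} applies with base just above $q$, then $M$ has a $\PG(\cdot,q')$-minor with $q<q'\le q^2$, forcing $q'=q^2$, hence a $\PG(m-1,q^2)$-minor; since $\PG^{(k+1)}(m-1,q)$ is a restriction of $\PG(m-1,q^2)$, $M$ then has a $\PG^{(k+1)}(m-1,q)$-minor $M'$, and as $\PG^{(k+1)}(m-1,q)$ contains the spanning weakly round $\GF(q)$-represented restriction $\PG(m-1,q)$ and $|\PG^{(k+1)}(m-1,q)|>|\PG^{(k)}(m-1,q)|$ by Lemma~\ref{sizeepg}, this is outcome~(\ref{spanningoutcome}). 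Otherwise, using the given $\PG(f_{\ref{mainreduction}}(m,q,k)-1,q)$-minor $N=M\con C\del D$ together with Theorem~\ref{pgunique} and Corollary~\ref{framenice}, the matroid $M\con C$ has a $\GF(q)$-represented restriction $R$ with a $\PG(m-1,q)$-minor; I would then enlarge $R$ to a spanning restriction by repeatedly contracting a non-heavy element outside $\cl(R)$ — one always exists, since weak roundness keeps the complement of $\cl(R)$ of rank at least $r(\cdot)-1$ whereas the heavy points span a flat of rank $<m$ — while arranging that at each step $R$ remains a $\GF(q)$-represented $\PG(\cdot,q)$-restriction (as in Lemma~\ref{pgframe} and Corollary~\ref{framenice}) and that $\elem(\cdot)-|\PG^{(k)}(r(\cdot)-1,q)|$ stays positive, using the identities $|\PG^{(k)}(r-1,q)|-|\PG^{(k)}(r-2,q)|=q^{r-1+k}$ and $\elem(P)-\elem(P\con e)=1+\sum_L(\elem_P(L)-2)$ (over lines $L\ni e$ on at least three points), the non-heaviness of $e$ (which bounds the long-line part of the loss), and a skew-set/majority argument in the spirit of Lemma~\ref{skewsubset}. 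When $R$ becomes spanning we obtain the minor $M'$ required by outcome~(\ref{spanningoutcome}).

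\emph{Main obstacle.} The hard part is the density bookkeeping in the constellation-free case: lines of length between $3$ and $q+1$ are unconstrained in number, so a single contraction can in principle cost more than the $q^{r-1+k}$ gain in the $\PG^{(k)}$-threshold, and making the argument go through will require a potential-function refinement of the invariant $\elem(\cdot)-|\PG^{(k)}(r(\cdot)-1,q)|$ and a more delicate use of weak roundness and of the constellation-free hypothesis than indicated above. A related subtlety is to guarantee that whenever the invariant would fail, the offending long lines genuinely assemble into a constellation restriction of some weakly round minor of $M$ with a large projective-geometry minor, so that one in fact lands in outcome~(\ref{constellationoutcome}) rather than being stuck.
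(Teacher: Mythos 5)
Your constellation case is fine (and in fact overkill: once $M$ itself has an $(m,q,k+1)$-constellation restriction, taking $M'=M$ already gives outcome~(\ref{constellationoutcome}), since $M$ is weakly round and has a $\PG(m-1,q)$-minor by the size of $f_{\ref{mainreduction}}$; no call to Lemma~\ref{contractrestriction} is needed). The problem is the constellation-free case, where there are two genuine defects.

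First, the sub-branch via Theorem~\ref{densitygk} with ``base just above $q$'' is not available. The hypothesis gives only $\elem(M) > |\PG^{(k)}(r(M)-1,q)| = \Theta(q^{r(M)})$, which is exponential with base exactly $q$. For any fixed $\beta>q$, eventually $f_{\ref{densitygk}}(\beta,q^2,m)\,\beta^{r(M)}$ dominates $q^{r(M)}$ times any constant, so this branch never triggers once $r(M)$ is large (which is the only regime the lemma is about). Thus you never obtain a $\PG(m-1,q^2)$-minor this way, and the entire ``constellation-free'' load falls on the second sub-branch, which you yourself flag as incomplete. That flag is accurate: a single contraction can destroy up to $\Theta(q^{r-1})$ points on short lines while the threshold $|\PG^{(k)}(r-1,q)|$ drops by only $q^{r-1+k}/(q-1)$ minus lower order, and non-heaviness of the contracted element only controls the long-line losses. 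There is no obvious potential function that closes this gap by local bookkeeping.

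Second, and more structurally, your dichotomy on ``$M$ has a constellation restriction or not'' is not how outcome~(\ref{constellationoutcome}) is actually reached. In the paper's argument, $M$ is first taken minor-minimal subject to the hypotheses; the $\PG(n-1,q)$-minor is written as $M\con C\del D$, and a counting argument on lines through each $e\in C$, driven entirely by the \emph{minor-minimality} (contracting $e$ must drop density below threshold), shows $M$ has a $(|C|,q,k+1)$-constellation. If $|C|\ge m$, outcome~(\ref{constellationoutcome}) follows; otherwise $|C|<m$, and instead of enlarging $R$ by contractions, one does a matrix pigeonhole: with fewer than $m$ rows indexed by $C$, there are at most $q^{2(m-1)}$ possible $C$-column patterns over $\GF(q^2)$, so a $q^{-2(m-1)}$-fraction $Y$ of $E(N)$ has constant $C$-pattern; after row scalings, $M|Y$ is $\GF(q)$-represented and still dense enough for Lemma~\ref{weakroundnessreduction} and then Theorem~\ref{densitygk} (with $\beta = q-\tfrac12$, not $\beta>q$) to produce a weakly round, $\GF(q)$-represented $R$ with a $\PG(m-1,q)$-minor \emph{in one shot}. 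Finally, one takes $M'$ minor-minimal among weakly round minors of $M$ containing $R$ as a $\GF(q)$-represented restriction and still above the density threshold. If $R$ spans $M'$, that is outcome~(\ref{spanningoutcome}); if not, weak roundness gives an independent set $S$ of size $m$ disjoint from $\cl_{M'}(E(R))$, and minor-minimality of $M'$ forces every $e\in S$ to be the center of a long-line star, producing a constellation in $M'$ — i.e., outcome~(\ref{constellationoutcome}) again, even though $M$ itself need not have a constellation restriction. So the dichotomy you need is really on $|C|$, with a second minor-minimality step that can land in outcome~(\ref{constellationoutcome}) regardless; the pigeonhole construction of $R$ and the double use of minor-minimality are the ideas missing from your proposal, and they are what replace the delicate contraction bookkeeping you were (correctly) worried about.
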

	\begin{proof}
		Let $r$ be an integer large enough so that 
		\[q^{r'-3m} \ge f_{\ref{densitygk}}(q-\tfrac{1}{2},q,m)(q - \tfrac{1}{2})^{r'}\]
		for all $r' \ge r$. Let $n = f_{\ref{weakroundnessreduction}}(q^2,q^{1-3m},r)+ 2m$. Set $f_{\ref{mainreduction}}(m,q,k) = n$. 
		
		We may assume that $M$ is simple, and minor-minimal satisfying the hypotheses. Let $N = M \con C \del D \cong \PG(n-1,q)$, where $C$ is independent, and $D$ is coindependent. 
		\begin{claim}\label{getconstclaim}
		$M$ has a $(|C|,q,k+1)$-constellation restriction. 
		\end{claim}
		\begin{proof}[Proof of claim:]\
			Let $e \in C$. The matroid $M \con e$ is weakly round and GF$(q^2)$-representable, and has an $N$-minor, so \[\elem(M \con e) \le |\PG^{(k)}(r(M\con e)-1,q)|\] by minor-minimality of $M$.  Let $\cL^+$ be the set of lines of $M$ containing $e$ and at least $q+1$ other points, and $\cL^{-}$ be the set of all other lines of $M$ containing $e$. Each line in $\cL^-$ contains at most $q$ points other than $e$, and each line in $\cL^+$ contains at most $q^2$ points other than $e$. We have $\elem(M \con e) = |\cL^+| + |\cL^-|$, and $\elem(M) \le q^2|\cL^+| + q|\cL^-| + 1 = q\elem(M \con e) + (q^2-q)|\cL^+| + 1$. Now
			\begin{align*}
				|\PG^{(k)}(r(M)-1,q)| &< \elem(M)	\\
								 &\le q\elem(M \con e) + (q^2-q)|\cL^+| + 1 \\
								 &\le q|\PG^{(k)}(r(M \con e)-1,q)| + (q^2-q)|\cL^+| + 1.
			\end{align*}
		This implies that \[|\cL^+| > \frac{1}{q^2-q}\left(|\PG^{(k)}(r(M)-1,q)|-q|\PG^{(k)}(r(M)-2,q)| - 1\right),\]
		and a computation gives $|\cL^+| > \frac{q^{2k}-1}{q^2-1}$. Let $X_e'$ be a set formed by choosing a point other than $e$ from each line in $\cL^+$. Since $M$ is GF($q^2$)-representable, it now follows that $r_M(X_e') > k$; let $X_e \subseteq X_e'$ be an independent set of size $k+1$. The set $C$, along with $X_e: e \in C$, gives the required constellation.
		\end{proof}
	Since $n \ge m$, the matroid $M$ also has a $\PG(m-1,q)$-minor, so if $|C| \ge m$, we have outcome~(\ref{constellationoutcome}) for $M$ by \ref{getconstclaim}. We may therefore assume that $|C| < m$. 
		
	\begin{claim}
		There is a weakly round, $\GF(q)$-represented restriction $R$ of $M$ so that $R$ has a $\PG(m-1,q)$-minor. 
	\end{claim}	
	\begin{proof}[Proof of claim:] 
		Since $E(N)$ is a spanning restriction of $M \con C$, there is a matrix $A'$ representing $M$ over GF$(q^2)$ of the following form:
		\[
		A' = 
		\begin{blockarray}{cccc}
			&C & E(N) & D\\
			\begin{block}{c(ccc)}
				C & I_C & Q_1 & Q_2 \\
				\left[n\right] & 0 & B & Q_3 \\
			\end{block}
		\end{blockarray}
		\]
		where $M(B) \cong \PG(n-1,q)$. By applying Theorem~\ref{pgunique} to the submatrix $A'[[n],E(M)]$, we may assume that all entries of $B$ are in GF$(q)$. Since $|C| < m$, there are at most $q^{2(m-1)}$ distinct column vectors in $Q_1$, so there is some $Y \subseteq E(N)$ so that $|Y| \ge q^{-2(m-1)}|E(N)|$, and all columns of the matrix $Q_1[Y]$ are the same. Now, 
		\[
			A'[Y] = \left(\begin{array}{c}
			Q_1[Y] \\ B[Y] \\
			\end{array}\right),
		\]
		where $Q_1[Y]$ is a matrix of rank at most $1$, so by scaling the first $|C|$ rows of $A'[Y]$, we can obtain a matrix of the following form:
		\[
			\left(\begin{array}{c} P \\ B[Y]\\ \end{array}\right),
		\]
		where all entries of $P$ are $0$ or $1$. Applying these same row scalings to $A'$ gives a matrix $A$ representing $M$ over GF$(q^2)$, in which all entries of $A[Y]$ are in $P$ or $B[Y]$, and therefore in GF($q$). 
		
		We have $|Y| \ge q^{-2(m-1)}|E(N)| > q^{n-2m+1}$.  Also, $r_M(Y) \le r(M) \le n+m-1$, so $|Y| > q^{-3m}q^{r(M|Y)}$. Finally, $M|Y$ is GF($q$)-representable, so $r_M(Y) \ge n-2m+2 \ge f_{\ref{weakroundnessreduction}}(q^2,q^{1-3m},r)$ by our first lower bound on $|Y|$. The function $g(i)$ defined by $g(i) = q^{i-3m}$ satisfies the hypotheses of Lemma~\ref{weakroundnessreduction} with $\alpha = q^{1-3m}$, so by this lemma, $M|Y$ has a weakly round restriction $R$ with $r(R) \ge r$, and $\elem(R) > q^{r(R)-3m}$.
		
		$A[E(R)]$ is a submatrix of $A[Y]$, so $R$ is a $GF(q)$-represented restriction of $M$. We have 
		\[\elem(R) > q^{-3m}q^{r(R)} \ge f_{\ref{densitygk}}(q - \tfrac{1}{2},q,m)(q - \tfrac{1}{2})^{r(R)}, \]
		so $R$ has a PG$(q',m-1)$-minor for some prime power $q' > q - \tfrac{1}{2}$. Since $R$ is GF$(q)$-representable, we must have $q' = q$, so $R$ satisfies the claim. 
	\end{proof}
	
	Let $M'$ be minor-minimal subject to the following conditions:
	\begin{itemize}
		\item $M'$ is a weakly round minor of $M$, and
		\item $\elem(M') > |\PG^{(k)}(r(M')-1,q)|$, and
		\item $R$ is a $\GF(q)$-represented restriction of $M'$. 
	\end{itemize}
	If $R$ is spanning in $M'$, then $M'$ and $R$ satisfy outcome~(\ref{spanningoutcome}). We may therefore assume that $r(R) < r(M')$. Since $R$ has a $\PG(m-1,q)$-minor, the following claim will give outcome~(\ref{constellationoutcome}) for $M'$.
	\begin{claim}
		$M'$ has an $(m,q,k+1)$-constellation restriction.
	\end{claim}
	\begin{proof}[Proof of claim:]
		We have $m \le r(R') \le r(M)-1$, so by weak roundness of $M'$, the set $E(M') - \cl_{M'}(E(R))$ has rank at least $r(M)-1 \ge m$ in $M$; let $S$ be an independent set of size $m$ in $M$, disjoint from $\cl_{M'}(E(R))$. 
		
		 For each $e \in S$, the matroid $M' \con e$ is weakly round, and we have $R = (M' \con e)|(E(R))$, so $R$ is a $\GF(q)$-represented restriction of $M' \con e$. By minimality of $M'$, it follows that 
		 \[\elem(M' \con e) \le |\PG^{(k)}(r(M' \con e)-1,q,k)|.\]
	The remainder of the proof is very similar to that of \ref{getconstclaim}.
	\end{proof}
	\end{proof}
	
	We can now prove Theorem~\ref{getepg}, which we restate here for convenience: 
	\begin{theorem}
	There is an integer-valued function $f_{\ref{getepg}}(n,q,k)$ satisfying the following: if $q$ is a prime power, $n$ and $k$ are integers with $0 \le k < n$, and $M$ is a GF$(q^2)$-representable matroid with $r(M) \ge f_{\ref{getepg}}(n,q,k)$ and 
	\[\elem(M) > |\PG^{(k)}(r(M)-1,q)|,\]
	then $M$ has a $\PG^{(k+1)}(n-1,q)$-minor.
	\end{theorem}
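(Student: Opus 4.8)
The plan is to reduce Theorem~\ref{getepg} to the two ``win'' lemmas, Lemma~\ref{spanningwin} and Lemma~\ref{constellationwin}, by first passing to a weakly round restriction of the same density and then invoking the dichotomy of Lemma~\ref{mainreduction}. Essentially all of the real work is already contained in those lemmas (and, through Lemma~\ref{contractrestriction}, in the matching theorem Theorem~\ref{pgmatching}); what remains is to set up the constants and to check that each hypothesis survives the reductions.

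First I would fix the parameters. Put $m = \max\bigl(f_{\ref{spanningwin}}(n,q,k),\, f_{\ref{constellationwin}}(n,q,k)\bigr)$ and $N = f_{\ref{mainreduction}}(m,q,k)$. From Lemma~\ref{sizeepg} one checks directly that $|\PG^{(k)}(j,q)| \ge 2\,|\PG^{(k)}(j-1,q)|$ for every prime power $q$ and every $j$, so there is a threshold $n_1 = n_1(q,k)$ and a real-valued function $g$ with $g(1) > 0$, with $g(j) \ge 2g(j-1)$ for all $j > 1$, and with $g(j) = |\PG^{(k)}(j-1,q)|$ for all $j \ge n_1$ (take $g$ to equal $|\PG^{(k)}(\cdot-1,q)|$ above $n_1$ and extend it downwards by repeated halving); set $\alpha = g(1)$. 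Since $|\PG^{(k)}(j-1,q)|$ exceeds $f_{\ref{densitygk}}(q-\tfrac12,q^2,N)\,(q-\tfrac12)^{j}$ once $j$ is large, there is a threshold $r^{\ast} = r^{\ast}(n,q,k)$ so that any $\GF(q^2)$-representable matroid $M_1$ with $r(M_1) \ge r^{\ast}$ and $\elem(M_1) > |\PG^{(k)}(r(M_1)-1,q)|$ has, by Theorem~\ref{densitygk} applied with $\ell = q^2$, $\beta = q-\tfrac12$ and $n = N$, a $\PG(N-1,q')$-minor for some prime power $q' \ge q$; this minor is $\GF(q^2)$-representable, so $\GF(q')$ embeds in $\GF(q^2)$, which together with $q' \ge q$ forces $q' \in \{q,q^2\}$, and in either case $\PG(N-1,q)$ is a restriction of $\PG(N-1,q')$, so $M_1$ has a $\PG(N-1,q)$-minor. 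Now set $r = \max(r^{\ast},n_1)$ and $f_{\ref{getepg}}(n,q,k) = \max\bigl(n_1,\, f_{\ref{weakroundnessreduction}}(q^2,\alpha,r)\bigr)$.

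Now I would run the argument. Suppose $r(M) \ge f_{\ref{getepg}}(n,q,k)$ and $\elem(M) > |\PG^{(k)}(r(M)-1,q)|$. Since $M$ is $\GF(q^2)$-representable, $M \in \cU(q^2)$, and since $r(M) \ge n_1$ we have $\elem(M) > g(r(M))$; Lemma~\ref{weakroundnessreduction} (with $\ell = q^2$) yields a weakly round restriction $M_1$ of $M$ with $r(M_1) \ge r$ and $\elem(M_1) > g(r(M_1)) = |\PG^{(k)}(r(M_1)-1,q)|$, the last equality because $r(M_1) \ge r \ge n_1$. By the previous paragraph, $M_1$ has a $\PG(N-1,q)$-minor, so Lemma~\ref{mainreduction} applies to $M_1$ with parameter $m$ and produces a minor $M'$ of $M_1$ satisfying outcome (i) or outcome (ii). In outcome (i), $M'$ has a weakly round, spanning, $\GF(q)$-represented restriction with a $\PG(m-1,q)$-minor, and $\elem(M') > |\PG^{(k)}(r(M')-1,q)|$; since $m \ge f_{\ref{spanningwin}}(n,q,k)$, Lemma~\ref{spanningwin} gives $M'$ a $\PG^{(k+1)}(n-1,q)$-minor. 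In outcome (ii), $M'$ is weakly round with an $(m,q,k+1)$-constellation restriction and a $\PG(m-1,q)$-minor; since $m \ge f_{\ref{constellationwin}}(n,q,k)$ and constellations and projective geometries each contain smaller ones as restrictions/minors, Lemma~\ref{constellationwin} again gives $M'$ a $\PG^{(k+1)}(n-1,q)$-minor. In either case $M'$ is a minor of $M$, so $M$ has a $\PG^{(k+1)}(n-1,q)$-minor, as required.

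The main obstacle is not conceptual but organisational: the substantive content lies in Lemma~\ref{contractrestriction} and the constellation analysis of Lemma~\ref{mainreduction}, and here it merely has to be assembled. The two points needing genuine care are the density-to-geometry step --- arranging that the prime power $q'$ delivered by Theorem~\ref{densitygk} may be taken to be $q$, which rests on the subfield structure of $\GF(q^2)$ --- and the observation that the density hypothesis $\elem(M) > |\PG^{(k)}(r(M)-1,q)|$ must be \emph{exactly} preserved by the passage to the weakly round restriction (not merely preserved up to a multiplicative constant), which is why $g$ is arranged to coincide with $|\PG^{(k)}(\cdot-1,q)|$ for large rank rather than to bound it from below by a constant factor.
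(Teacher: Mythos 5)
Your proposal is correct and follows essentially the same route as the paper's proof: reduce to a weakly round restriction via Lemma~\ref{weakroundnessreduction}, extract a $\PG(\cdot,q)$-minor from the density hypothesis via Theorem~\ref{densitygk} together with the observation that a $\GF(q^2)$-representable projective geometry over a prime power $q'>q-\tfrac12$ forces $q'\in\{q,q^2\}$, and then hand off to Lemma~\ref{mainreduction} and the two ``win'' lemmas. You are in fact slightly more careful than the published argument in two small ways: you arrange for the density-to-geometry step to deliver a $\PG(f_{\ref{mainreduction}}(m,q,k)-1,q)$-minor (by setting $N=f_{\ref{mainreduction}}(m,q,k)$) so that Lemma~\ref{mainreduction}'s hypothesis is literally met, whereas the paper's choice of constants only visibly produces a $\PG(m-1,q)$-minor; and you extend the density function $g$ downward by halving so that the hypotheses $g(1)>0$, $g(j)\ge 2g(j-1)$ of Lemma~\ref{weakroundnessreduction} hold for all $j$, whereas the paper asserts these for $g(i)=|\PG^{(k)}(i-1,q)|$, which is not even defined when $i<k$. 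Both points are bookkeeping fixes rather than a different strategy, and the substance of the argument is identical.
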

	\begin{proof}
		 We define the function $f_{\ref{getepg}}$ as follows. Let \[m = \max(f_{\ref{spanningwin}}(n,q,k), f_{\ref{constellationwin}}(n,q,k)).\] Let $\alpha = f_{\ref{densitygk}}(q-\tfrac{1}{2},q^2,m)$. Let  $r$ be an integer large enough so that 
		 \[|\PG^{(k)}(r'-1,q,k)| \ge \alpha(q - \tfrac{1}{2})^{r'}\]
		 for all $r' \ge r$, and let $s= f_{\ref{weakroundnessreduction}}(q^2,1,r)$. We set $f_{\ref{getepg}}(n,q,k) = s$. 
		 
	Let $M$ be a GF$(q^2)$-representable matroid with $r(M) \ge s$, and $\elem(M) > |\PG^{(k)}(r(M)-1,q)|$. The function $g(i) = |\PG^{(k)}(i-1,q)|$ can easily be seen to satisfy $g(1) \ge 1$ and $g(i) \ge 2g(i-1)$ for all $i \ge 2$, so By Lemma~\ref{weakroundnessreduction}, $M$ has a weakly round restriction $N$ with $r(N) \ge r$, and $\elem(N) > |\PG^{(k)}(r(N)-1,q)|$. 
	
	By Lemma~\ref{densitygk} and definition of $r$, $N$ has a PG$(m-1,q')$-minor for some $q' > q - \tfrac{1}{2}$. Since $N$ is GF($q^2$)-representable, we have $q' = q^2$ or $q' = q$, so in either case, $N$ has a $\PG(m-1,q)$-minor. The lemma now follows by applying Lemma~\ref{mainreduction} to $N$, and then either Lemma~\ref{spanningwin} or Lemma~\ref{constellationwin} to the minor $M'$ of $N$ given by Lemma~\ref{mainreduction}. 
	\end{proof}

\section{The Main Theorems}

	We first prove Theorem~\ref{mainresult}, which we restate here:

	\begin{theorem}
	Let $q$ be a prime power. If $\cM$ is a proper minor-closed subclass of the $\GF(q^2)$-representable matroids containing all simple $\GF(q)$-representable matroids, then there is an integer $k \ge 0$ such that $\cP_{q,k} \subseteq \cM$, and $h_{\cM}(n) = h_{\cP_{q,k}}(n)$ for all large $n$.
	\end{theorem}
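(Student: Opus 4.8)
The plan is to combine the machinery already developed—principally Theorem~\ref{getepg}, Lemma~\ref{extendedproj}, and Theorem~\ref{densitygk}—to pin down the largest $k$ for which $\cP_{q,k}\subseteq\cM$. First I would observe that $\cP_{q,0}$ is exactly the class of $\GF(q^2)$-representable matroids whose simple members are restrictions of projective geometries over $\GF(q)$, so the hypothesis that $\cM$ contains all simple $\GF(q)$-representable matroids gives $\cP_{q,0}\subseteq\cM$; thus the set of integers $k\ge 0$ with $\cP_{q,k}\subseteq\cM$ is nonempty. Next I would argue this set is bounded above: since $\cM$ is a \emph{proper} subclass of the $\GF(q^2)$-representable matroids and is minor-closed, it omits some $\GF(q^2)$-representable matroid $M_0$, and by Theorem~\ref{densitygk} (or the Growth Rate Theorem applied in the form of Theorem~\ref{growthrate}(\ref{expgrowth})) together with the fact that $\cM$ contains all $\GF(q)$-representable matroids, $\cM$ cannot contain $\PG(j,q^2)$ for all $j$—indeed if it did it would contain all $\GF(q^2)$-representable matroids. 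Fix $k'$ with $\PG(k'+1,q^2)\notin\cM$. By Corollary~\ref{squarefieldpg}, $\PG^{(k'+1)}(n-1,q)$ has a $\PG(k'+1,q^2)$-restriction for $n>k'+1$, so $\PG^{(k'+1)}(n-1,q)\notin\cM$, and hence (again by Lemma~\ref{contracttoepg} / the definition of $\cP_{q,k'+1}$, noting $\PG^{(k'+1)}(n-1,q)\in\cP_{q,k'+1}$) we get $\cP_{q,k'+1}\not\subseteq\cM$. Therefore there is a largest $k\ge 0$ with $\cP_{q,k}\subseteq\cM$.

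With this $k$ fixed, the first conclusion $\cP_{q,k}\subseteq\cM$ holds by choice, and it remains to show $h_{\cM}(n)=h_{\cP_{q,k}}(n)$ for all large $n$. The inequality $h_{\cM}(n)\ge h_{\cP_{q,k}}(n)$ is immediate from $\cP_{q,k}\subseteq\cM$. For the reverse inequality, suppose for contradiction that for arbitrarily large $n$ there is a simple matroid $M\in\cM$ with $r(M)\le n$ and $|M|>h_{\cP_{q,k}}(n)$. By Lemma~\ref{extendedproj}, $h_{\cP_{q,k}}(r(M))=|\PG^{(k)}(r(M)-1,q)|$, so (after passing to the rank of $M$, and using that $|\PG^{(k)}(m-1,q)|$ is monotone in $m$) we may assume $|M|>|\PG^{(k)}(r(M)-1,q)|$ with $r(M)\ge f_{\ref{getepg}}(n',q,k)$ for whatever target $n'$ we like. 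Then Theorem~\ref{getepg} furnishes a $\PG^{(k+1)}(n'-1,q)$-minor of $M$, hence $\PG^{(k+1)}(n'-1,q)\in\cM$ for all $n'$. Since $\PG^{(k+1)}(n'-1,q)\in\cP_{q,k+1}$ and these matroids (for $n'>k+1$) together with their minors generate $\cP_{q,k+1}$—more carefully, every simple member of $\cP_{q,k+1}$ of rank $n'$ is a restriction of $\PG^{(k+1)}(n'-1,q)$ by Lemma~\ref{extendedproj}—we conclude $\cP_{q,k+1}\subseteq\cM$, contradicting maximality of $k$.

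The main obstacle I anticipate is the bookkeeping in the last step: I must be careful that $\cM$ containing all the matroids $\PG^{(k+1)}(n'-1,q)$ really does force $\cP_{q,k+1}\subseteq\cM$. This uses that $\cM$ is minor-closed together with Lemma~\ref{extendedproj}, which says precisely that every simple rank-$n'$ matroid in $\cP_{q,k+1}$ is a \emph{restriction} of $\PG^{(k+1)}(n'-1,q)$; since every member of $\cP_{q,k+1}$ has its simplification of the same rank as itself and restrictions are minors, this does the job—but one must invoke $k+1<n'$, which is fine since $n'$ can be taken large. A secondary subtlety is making the rank of $M$ large enough to apply Theorem~\ref{getepg} while keeping the density strict: since the quantity $|M|-|\PG^{(k)}(r(M)-1,q)|$ must be at least $1$ and the ranks are unbounded, this is routine, but I would phrase it via: if $|M|>h_{\cP_{q,k}}(n)\ge h_{\cP_{q,k}}(r(M))=|\PG^{(k)}(r(M)-1,q)|$ then apply Theorem~\ref{getepg} directly with $r(M)\ge f_{\ref{getepg}}(n',q,k)$. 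Finally, for the boundedness-above step I should double-check the standard fact that a minor-closed class of $\GF(q^2)$-representable matroids containing every $\PG(j,q^2)$ is \emph{all} of them; this is exactly because every $\GF(q^2)$-representable matroid is a restriction of some $\PG(j,q^2)$, hence a minor, so this is immediate.
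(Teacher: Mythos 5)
Your proof follows the paper's argument essentially step for step: establish $\cP_{q,0}\subseteq\cM$, use the omitted $\PG(s,q^2)$ together with Corollary~\ref{squarefieldpg} to bound the admissible $k$, take $k$ maximal with $\cP_{q,k}\subseteq\cM$, then use Theorem~\ref{getepg} plus Lemma~\ref{extendedproj} to show that a density violation at arbitrarily large rank would force $\cP_{q,k+1}\subseteq\cM$. One small slip worth flagging: $\cP_{q,0}$ is precisely the set of projective geometries over $\GF(q)$ (not the larger class you describe), but this only makes the inclusion $\cP_{q,0}\subseteq\cM$ easier, and the rest of the argument is unaffected.
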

	\begin{proof}
		Since $\cM$ does not contain all $\GF(q^2)$-representable matroids, there is an integer $s$ so that $\PG(s,q^2) \notin \cM$.
The set $\cP_{q,0}$ is just the set of projective geometries over $\GF(q)$, so $\cP_{q,0} \subseteq \cM$. By Corollary~\ref{squarefieldpg} and Lemma~\ref{extendedproj}, we have $\cP_{q,s'} \not\subseteq \cM$ for all $s' \ge s$; let $k \ge 0$ be maximal so that $\cP_{q,k} \subseteq \cM$. 
		 
		 We have $h_{\cM}(n) \ge h_{\cP_{q,k}}(n)$ for all $n$; we need to show that this holds with equality for all large $n$. Suppose that this is not the case. For all integers $m > k$, there is therefore some $M \in \cM$ such that $r(M) \ge f_{\ref{getepg}}(m,q,k)$ and $\elem(M) > h_{\cP_{q,k}}(r(M)) = |\PG^{(k)}(r(M)-1,q)|$. By Theorem~\ref{getepg}, $M$ therefore has an $\PG^{(k+1)}(m-1,q)$-minor. Thus, $\cM$ contains $\PG^{(k+1)}(m-1,q)$ for all $m > k$, so by Lemma~\ref{extendedproj}, $\cP_{q,k+1} \subseteq \cM$, contradicting maximality of $k$.
	\end{proof}

	Theorem~\ref{mainresult1} is now immediate, and Theorem~\ref{mainresult2} follows by applying Corollary~\ref{squarefieldpg} and Lemma~\ref{extendedproj}. Theorems~\ref{maincor1} and~\ref{maincor2} also have easy proofs:
		
		
	\begin{proof}[Proof of Theorem~\ref{maincor1}]
		Let $n_q$ be the integer $n_{1,q}$ given by Theorem~\ref{mainresult2}. By Lemma~\ref{anyfield}, $\cM$ contains $\PG^{(1)}(n-1,q)$ for all $n \ge 0$, but not $\PG(2,q^2)$; Theorem~\ref{mainresult2} gives \[ \elem(M) \le \frac{q^{r(M)+1}-1}{q-1} - q = |\PG^{(1)}(r(M)-1,q)|\] for all $M$ satisfying $r(M) \ge n_q$. But $h_{\cM}(n) \ge |\PG^{(1)}(n-1,q)|$ for all $n$, so the theorem follows.
	\end{proof}
	\begin{proof}[Proof of Theorem~\ref{maincor2}]
	Let $n_{1,q}$ be given by Theorem~\ref{mainresult2}. Let $\fH_q$ be the set of integer-valued functions $f$ so that $0 \le f(n) \le \frac{q^{2n}-1}{q^2-1}$ for all $0 \le n < n_{1,q}$, and 
	\[f(n) = \frac{q^{n+1}-1}{q-1} - q\]
	for all $n \ge n_{1,q}$. The set $\fH_q$ is clearly finite. Let $\cF$ be a set of fields satisfying the hypotheses, and $\cM$ be the class of matroids representable over all fields in $\cF$. There is some $\bF \in \cF$ with no $\GF(q^2)$-subfield, so by Lemma~\ref{anyfield}, we know that $\PG^{(1)}(n-1,q) \in \cM$ for all $n$, and $\PG(2,q^2) \notin \cM$. It now follows from $\GF(q^2)$-representability of matroids in $\cM$, and a similar argument to the proof of Theorem~\ref{maincor1}, that $h_{\cM} \in \fH_q$, giving the theorem. 
	\end{proof}

\section*{Acknowledgements}

		I would like to thank my supervisor Jim Geelen for suggesting the problem, for his useful advice towards its solution, and for his comments on the manuscript. 	
\section*{References}
\newcounter{refs}

\begin{list}{[\arabic{refs}]}
{\usecounter{refs}\setlength{\leftmargin}{10mm}\setlength{\itemsep}{0mm}}

\item\label{gk}
J. Geelen, K. Kabell,
Projective geometries in dense matroids, 
J. Combin. Theory Ser. B 99 (2009), 1-8.

\item\label{gn}
J. Geelen, P. Nelson, 
The number of points in a matroid with no n-point line as a minor, 
J. Combin. Theory. Ser. B 100 (2010), 625-630.

\item\label{bl}
T.H. Brylawski, T.D. Lucas, 
Uniquely representable combinatorial geometries,
Atti dei Convegni Lincei 17, Tomo I (1976), 83-104.

\item\label{lovasz}
L. Lov\'asz,
Selecting independent lines from a family of lines in a space,
Acta Sci. Math. 42 (1980), 121-131.

\item\label{gkw}
J. Geelen,ÊJ.P.S. Kung,ÊG. Whittle,Ê
Growth rates of minor-closed classes of matroids,
J. Combin. Theory. Ser. B 99 (2009), 420~427ÊÊÊ


\item\label{kung}
J.P.S. Kung,
Extremal matroid theory, in: Graph Structure Theory (Seattle WA, 1991), 
Contemporary Mathematics, 147, American Mathematical Society, Providence RI, 1993, pp.~21--61.

\item\label{fields}
R. Lidl, H. Niederreiter, 
Finite Fields, 
Cambridge University Press, New York, 1997.

\item \label{oxley}
J. G. Oxley, 
Matroid Theory,
Oxford University Press, New York, 2011.
\end{list}
\end{document}